\newtheorem{thm}{Theorem}
\newtheorem{cor}[thm]{Corollary}
\newtheorem{lem}[thm]{Lemma}
\newtheorem{prop}[thm]{Proposition}
\newtheorem{rem}[thm]{Remark}
\newcommand{\F}{\mathcal{F}}
\newcommand{\G}{\mathcal{G}}
\newcommand{\D}{\mathcal{D}}
\newcommand{\divR}{\textup{div}^{\textup{h}}}
\newcommand{\DR}{\D^{\textup{h}}}
\newcommand{\fin}{\mbox{}\hfill $\square$ \\[0.2cm]}
\newcommand{\radial}{\mathcal{R}}
\newcommand{\simfor}{\sim_0}
\newcommand{\dd}{\textup{d}}
\newcommand{\corr}[1]{{#1}}
\newenvironment{pf}{\par\noindent\textbf{Proof. }}
{\fin}
\newenvironment{pfof}[1]{\par\noindent
\textbf{Proof of #1. }}
{\fin}
\title{Classification and moduli spaces of dicritical singularities}
\author{Gabriel {\sc Calsamiglia}, Yohann {\sc Genzmer}\footnote{The first author is supported by CNPq/Faperj; the second author supported by the ANR under the project ANR-13-JS01-0002-01. Both authors are supported by the Math-Amsud project DFH.}}
\begin{document}
\maketitle

\begin{abstract}
  In this paper we give complete analytic invariants for germs of holomorphic foliations in $(\mathbb{C}^2,0)$ that become regular after a single blow-up. Some of them describe the holonomy pseudogroup of the germ and are called transverse invariants. The other invariants lie in finite dimensional complex vector space.  Such singularities admit separatrices tangent to any direction at the origin. When enough separatrices coincide with their tangent directions  (a condition that can always be attained if the mutiplicity of the germ at the origin is at most four) we are able to describe and realize all the analytical invariants geometrically and provide analytic normal forms. As a consequence we  prove that any two such germs sharing the same transverse invariants are conjugated by a very particular type of birational transformations. We also provide the first explicit examples of universal equisingular unfoldings of foliations that cannot be produced by unfolding functions. With these at hand we are able to explicitely parametrize families of analytically distinct foliations that share the same transverse invariants. 
\end{abstract}

\tableofcontents

\section{Introduction and main statements}

In this paper, we deal with analytic invariants, normal forms and unfoldings of germs of holomorphic foliations in $(\mathbb{C}^2,0)$. Two such foliations are said to be analytically equivalent if there exists a germ of biholomorphism  of $(\mathbb{C}^2,0)$ sending leaves of one to leaves of the other. There are two known objects that have analytic properties and contain some of the analytical information of the foliation. On the one hand, there is the the analytic structure of the {\bf holonomy pseudogroup} formed by holonomy maps associated to paths in the leaves of the foliation and transverse sections at the endpoints. These maps are holomorphic as soon as the foliation is. On the other, we know that there is a non-empty set of separatrices through $0$. These are leaves $L$ such that $L\cup 0$ is a germ of analytic curve at $0$. The analytic class of the {\bf set of separatrices} also contains analytical information of the foliation. In the 70's, R. Thom asked whether the analytic class of these two objects is enough to determine the analytical class of the foliation. When the union of all separatrices is an analytic curve, there are instances where the answer is positive. For example, for generic homogeneous foliations, namely foliations whose separatrix set is a homogeneous curve and whose singularities after one blow-up are of hyperbolic type, the analytical class of the curve and of the projective holonomy representation determine the analytical type of the singularity (see \cite{Ma} or \cite{Genzmer}). In particular, foliations defined by holomorphic vector fields whose multiplicity at the origin is less than five and with generic first homogeneous term, fall in the previous case.

A different approach to the problem was taken by J.-F. Mattei in the 80's (see \cite{M}). In the spririt of Kodaira and Spencer's theory of deformations of complex structures on manifolds, he proved that any germ $\mathcal{F}$ of holomorphic foliation of $(\mathbb{C}^2,0)$ can be unfolded to a {\em codimension one} germ of foliation $\widetilde{\mathcal{F}}$ on $(\mathbb{C}^{2+\mathcal{M}(\mathcal{F})},0)$ in such a way that any other unfolding of $\mathcal{F}$ that preserves the singularity type of $\mathcal{F}$ is equivalent to one obtained from $\widetilde{\mathcal{F}}$ in a unique manner by pull back.  He calculated the dimension $\mathcal{M}(\mathcal{F})$ of the base space of this {\em universal equisingular unfolding} of $\mathcal{F}$ and concluded that it is always finite and almost always positive. By construction, the deformation of $\mathcal{F}$ obtained by considering the foliations $\{\mathcal{F}_c: c\in(\mathbb{C}^{\mathcal{M}(\mathcal{F})},0)\}$ of $(\mathbb{C}^2,0)$ obtained by intersecting $\widetilde{\mathcal{F}}$ with the fibres of the projection $(\mathbb{C}^{2+\mathcal{M}(\mathcal{F})},0)\rightarrow (\mathbb{C}^{\mathcal{M}(\mathcal{F})},0)$ has the same singularity type and  holonomy pseudogroup for all parameters (up to equivalence), but are not analytically equivalent. In some sense, the moduli of Mattei tell us in how many ways we can locally change the analytical class of the foliation without changing the holonomy pseudo-group. In the case of homogeneous foliations, these moduli are simply the relative position of the points in the tangent cone of the set of separatricies: actually, they characterize the analytical class of the separatrix set. In general, we are not able to interpret geometrically the other moduli of Mattei. Indeed, the construction of the latter is not explicit, producing the foliations by foliated surgery. Up to now, the only explicit examples of such non-trivial unfoldings were obtained by unfolding germs of functions. In fact, the first examples of foliations, where the answer to Thom's question is definitively negative, can be constructed by unfolding functions (see \cite{GP}). For example, the foliation defined by the germ at zero of the function $$f(x,y,z)=(1+z)xy(x+y)(x-y)(x+2y+y^2)$$
defines a non-trivial unfolding of the foliation $\mathcal{F}=\{f(x,y,0)=const\}$ having the same separatrix set for all parameters $z\in(\mathbb{C},0)$.

Yet another approach has been taken in recent years by Ortiz-Rosales-Voronin (see \cite{orv, orv1, orv2}). Their stragtegy is, on the one hand to find unique formal normal forms (up to formal transformations tangent to the identity) for certain families of foliations, and on the other to prove that formal analytical rigidity takes place in the generic cases. Hence the coefficients of the formal normal form turn out to be analytical invariants. This infinite number of parameters is then split into two subsets: one of them is infinite and contains the information on the holonomy pseudo-group and the other is finite and contains the rest of parameters. The number of parameters that is not associated to the holonomy pseudo-group coincides with the number of Mattei's parameters although it is not clear how the formal deformations obtained in the formal normal forms correspond to unfoldings in the sense of Mattei. Again, it is not clear what these parameters mean geometrically.

In this paper, we will give a precise description of invariants, their geometric interpretation, analytic normal forms, unfoldings and moduli spaces for a particular class of foliations admitting an infinite number of separatrices: homogeneous dicritical foliations. Most of our arguments will be geometric. In a second instance, we will apply formal methods to try to generalize the claims for foliations in a wider class: the class of germs that are regular after a single blow-up.

\subsection{Homogeneous dicritical foliations.}

A germ of foliation $\mathcal{F}$ of $(\mathbb{C}^2,0)$ is said to be homogeneous dicritical if it becomes regular after a single blow-up and there exists a foliation $\mathcal{G}$ defined by a holomorphic vector field with radial linear part $x\partial_x+y\partial_y$  such that $\text{Tang}(\mathcal{F},\mathcal{G})$ is {\em invariant}. In this case, we say that $\mathcal{F}$ is homogeneous with respect to $\mathcal{G}$. In other words, up to a change of coordinates (one that linearizes $\mathcal{G}$) there exists a subset of separatrices that is a union of straight lines and supports the tangency set of the foliation with the radial foliation. The set of all germs of foliations that are regular after a single blow up will be denoted by $\mathcal{D}$ and for each $n$ we denote by $\mathcal{D}(n)$ the set of foliations in $\mathcal{D}$ of multiplicity $n+1$. In particular $\mathcal{D}(0)$ corresponds to foliations with radial linear part, and will be called radial foliations. By Poincar\' e's linearization theorem, every radial foliation is holomorphically linearizable. The subset of $\mathcal{D}$ of homogeneous dicritical foliations will be denoted by $\DR$ and its subset $\DR(n)=\mathcal{D}(n)\cap \DR$ is formed by those having multiplicity $n+1$ at the origin. The following are examples of elements in $\DR(n)$ for $n\geq 1$:
\begin{enumerate}
  \item Consider homogenous polynomials $R(x,y)$ and $Q(x,y)$ of degrees $n$ and $n+2$ respectively such that $R$ and $xQ$ are coprime. The foliation defined by a holomorphic one form in a neighbourhood of $0\in(\mathbb{C}^2,0)$, $$\omega(x,y)=(R(x,y)+\cdots) (xdy-ydx)+Q(x,y)dx$$ lies in $\DR(n)$.
\item \label{ex:contracting separatrices} Consider a smooth rational curve $C$ embedded in a complex surface $S$ with  self-intersection $(n+2)$. Suppose that $S$ is bifoliated by a pair of regular {\em transverse} holomorphic foliations $\mathcal{F}$ and $\mathcal{G}$ and that $\mathcal{G}$ is transverse to $C$ at all points. Then the blow up of $S$ at $n+3$ points in  $C\setminus\text{Tang}(\mathcal{F},C)$ produces two foliations $\widetilde{\mathcal{F}}$ and $\widetilde{\mathcal{G}}$ around a $(-1)$-curve that can thus be contracted to a couple of germs of foliations. By construction, the germ of foliation associated to the initial $\mathcal{F}$ is homogeneous with respect to the radial foliation associated  to $\mathcal{G}$, and thus belongs to $\DR(n)$.
\end{enumerate}
Beyond these examples, we can state the following result.
\begin{thm}\label{t:small multiplicity}
 Every germ of plane holomorphic foliation of multiplicity at most four that is regular after a single blow up is homogeneous dicritical. For higher multiplicities, the equivalent statement is false.
\end{thm}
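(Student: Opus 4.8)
The plan is to pass to the blow-up and reduce the statement to a configuration problem on the exceptional divisor $E\cong\mathbb{P}^1$. First I would fix the standard presentation of a dicritical germ: since $\mathcal{F}\in\mathcal{D}(n)$ is dicritical, its defining $1$-form can be written $\omega = H(x,y)\,(x\,dy-y\,dx) + (\text{higher order})$, with $H$ homogeneous of degree $n$, and the hypothesis that $\mathcal{F}$ becomes regular after one blow-up translates into the coprimality of $H$ with the next radial obstruction $\widehat Q = xA_{n+2}+yB_{n+2}$ (no common zero on $\mathbb{P}^1$). In the chart $y=tx$ the transformed foliation is $\widetilde{\mathcal F}=\{(\widehat Q(1,t)+O(x))\,dx + (H(1,t)+O(x))\,dt=0\}$, which is regular along $E=\{x=0\}$, transverse to $E$ away from the $n$ zeros of $H$, and tangent to $E$ at those $n$ points.

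Next I would recast the homogeneous--dicritical property in a workable form. Since the radial vector field $R=x\partial_x+y\partial_y$ is invariant under $GL_2$, the pullback $\varphi^{*}\mathcal{R}$ of the radial foliation has radial linear part for every $\varphi\in\mathrm{Diff}(\mathbb{C}^2,0)$, and conversely every radial-type $\mathcal{G}$ is such a pullback (by the linearization recalled in the excerpt). Hence $\mathcal{F}$ is homogeneous dicritical if and only if some conjugate $\varphi_{*}\mathcal{F}$ has $\text{Tang}(\varphi_{*}\mathcal{F},\mathcal{R})$ supported on a union of invariant straight lines through the origin. Writing $\eta=a\,dx+b\,dy$ for $\mathcal{G}$ with $a\equiv -y,\ b\equiv x$, the tangency is $\{Ab-Ba=0\}$, and one computes $Ab-Ba = i_{R}\omega + (A\widehat b - B\widehat a)$; as $\widehat a,\widehat b$ range over $\mathfrak{m}^{2}$ the achievable tangency functions fill the affine space $i_{R}\omega + (A\mathfrak{m}^2+B\mathfrak{m}^2)$, so in particular the tangent cone of $\text{Tang}(\mathcal{F},\mathcal{G})$ can be modified by an arbitrary degree-$(n+3)$ multiple of $H$. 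The dictionary I would prove first is that a separatrix is a straight line exactly when it lifts to a horizontal leaf $\{t=\text{const}\}$ of $\widetilde{\mathcal F}$; thus being homogeneous dicritical amounts to producing, after an admissible conjugacy, enough horizontal leaves to carry the tangency divisor of multiplicity $n+3$ — the precise sense of ``enough separatrices coincide with their tangent directions''.

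The heart of the argument is then a normalization/counting step on $E$. The $n$ tangency points $\{H=0\}\subset E$, together with the induced jets of the nearby leaves, are intrinsic up to the $\mathrm{PGL}_2$ of automorphisms of $E$ coming from admissible conjugacies, and the obstruction to choosing a radial structure (equivalently a coordinate $t$) compatible with all of them while straightening the separatrices that support the tangency is governed by a finite-dimensional space whose dimension I expect to behave like $\max(0,n-3)$, matching $\dim\mathrm{PGL}_2=3$. For $n\le 3$ this space vanishes: the configuration is normalized by $\mathrm{PGL}_2$, and using the freedom above to move the tangent cone by multiples of $H$ one builds the required $\mathcal{G}$ and straightens the supporting separatrices simultaneously. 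I expect this realization step — controlling the higher jets of the separatrices and checking that the straightening can be carried out globally along $E$, not merely formally point by point — to be the main obstacle, and the place where $n\le 3$ is genuinely used.

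For the last assertion I would produce, for each $n\ge 4$, an explicit germ in $\mathcal{D}(n)$ that is not homogeneous dicritical. The cleanest source is the contraction construction of Example~\ref{ex:contracting separatrices}: blowing up a bifoliated rational curve at sufficiently generic points yields a regular $\widetilde{\mathcal F}$ whose $n\ge 4$ tangency data with $E$ carry a genuine cross-ratio-type modulus that no $\mathrm{PGL}_2$ normalization can remove. I would then argue this modulus is an obstruction: were some radial-type $\mathcal{G}$ to make $\text{Tang}(\mathcal{F},\mathcal{G})$ invariant, the straightening dictionary above would force these $n$ tangency data into a position incompatible with the computed invariant. Evaluating the invariant on the constructed family and checking that it is non-constant shows that such germs cannot be homogeneous dicritical, completing the proof.
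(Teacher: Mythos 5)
Your proposal has genuine gaps in both halves of the statement, and the one in the negative half is fatal.

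For the counterexamples ($n\geq 4$) your strategy cannot work as described. You propose to extract non-homogeneous germs from the contraction construction of Example \ref{ex:contracting separatrices}; but any germ produced by that construction is, by its very definition, homogeneous with respect to the radial foliation obtained by contracting the second foliation of the bifoliated surface, hence lies in $\DR(n)$ --- that is precisely what the example is there to illustrate, and Theorem \ref{t:construction} says conversely that every element of $\DR(n)$ arises this way. More generally, no ``cross-ratio-type modulus'' of the data on $E$ (positions of tangency points, holonomy germs, divisor classes) can obstruct homogeneity: the surjectivity part of Theorem \ref{thm:invariantsinD} shows that \emph{every} such data is realized by a foliation in $\DR$, so the obstruction is invisible at the level you propose to detect it; it lives in the deeper coefficients $(c_{ij})$ that parametrize the fibres of $\mathfrak{I}$. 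The paper instead writes down explicit forms --- $y^n(x\dd y-y\dd x)+x^{n+2}\dd x+y^2x^{n+1}\dd y$ for $n\geq 5$, and $y^4(x\dd y-y\dd x)+x^6\dd x+y^7\dd y$ for $n=4$ --- and verifies by direct jet computation (computer algebra for $n=4$, where the obstruction only appears in degree $9$) that the characterizing equation $X\cdot\omega(X)=u\,\omega(X)$, with $X$ a vector field with radial linear part and $u$ a unit, has no solution.

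For the positive half ($n\leq 3$) your plan omits the two steps that carry all the weight. First, homogeneity is a priori an infinite list of conditions (your ``straightening'' must be achieved to all orders and globally along $E$, which you yourself flag as the main obstacle without resolving it); the paper reduces this to a finite problem via a formal criterion ($\hat X\cdot\omega(\hat X)=\hat u\,\omega(\hat X)$, made convergent by Artin approximation) combined with a finite-determinacy lemma: vanishing of the jet of order $2n+1$ of $X\cdot\omega(X)-u\,\omega(X)$ suffices, because in every higher degree the coprimality of $R_n$ and $xP_{n+2}+yQ_{n+2}$ allows one to correct the next term by Bezout. Second, your expectation that the obstruction space vanishes for $n\leq 3$ by $\mathrm{PGL}_2$-normalization is contradicted by the paper for $n=3$: there $\divR(\mathcal{F})$ is a \emph{proper} quadric inside the four-dimensional affine space $\textup{div}(\mathcal{F})$, so a nontrivial condition genuinely remains; membership in $\DR(3)$ holds not because the condition disappears but because the resulting quadratic system (two linear equations with nonzero determinant plus one quadratic equation with nonzero leading coefficient) always has a solution over $\mathbb{C}$. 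A dimension count alone, even if your figure $\max(0,n-3)$ were justified, proves neither solvability for $n\leq 3$ nor non-solvability for any specific germ when $n\geq 4$; indeed the paper itself is only able to \emph{conjecture} that $\DR(n)$ has positive codimension in $\mathcal{D}(n)$ for $n\geq 4$, which is why explicit counterexamples are unavoidable there.
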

 By Camacho-Sad's index Theorem \cite{CS}, for any foliation $\mathcal{F}\in\mathcal{D}$, the pull-back foliation by the blow-up map, denoted by $\widetilde{\mathcal{F}}$ must be generically transverse to the exceptional divisor $E$. Thus, the foliations in $\mathcal{D}$ are \emph{dicritical}: they have an infinite number of invariant curves. In fact, every leaf is a separatrix. We can define the \emph{tangency divisor} $T(\mathcal{F})\in\text{div}(E)$ of tangency  between $\widetilde{\mathcal{F}}$ and $E$. It is an effective divisor whose degree is $n$ if and only if $\mathcal{F}$ belongs to $\mathcal{D}(n)$. Klughertz showed in \cite{klu} that two foliations $\mathcal{F},\mathcal{F}'\in\mathcal{D}$ are topologically equivalent if and only if the divisors $T(\mathcal{F})$ and $T(\mathcal{F}')$ are. Thus any partition of $n=n_1+\ldots+n_k$ defines a topological class in $\mathcal{D}(n)$ formed by foliations $\mathcal{F}$ having $T(\mathcal{F})=n_1p_1+\ldots n_kp_k$ for some pairwise distinct points $p_i$ in $E$.
\begin{figure}
\center{\includegraphics[scale=.7]{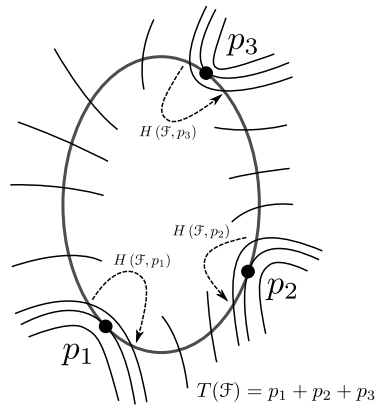}}
\caption{A foliation in $\mathcal{D}$ with three points in its divisor of tangency $T(\mathcal{F})$.}
\end{figure}

The \emph{holonomy pseudogroup} of such a foliation is quite simple: for any $p\in |T(\mathcal{F})|$, we can consider a local primitive holomorphic first integral $f$ of $\widetilde{\mathcal{F}}$ around $p$. The levels of the restriction $f_{|E}$ describe sets of points that belong to the same leaf. Since $f_{|E}$ is a holomorphic germ in one complex variable, it can be written as a power $\psi^{r+1}$, where $r$ is the order of tangency between $\widetilde{\mathcal{F}}$ and $E$ at $p$, and $\psi$ is a holomorphic germ of diffeomorphism. If $\theta_p$ denotes the rotation of angle $2\pi/(r+1)$, the germ $H(\mathcal{F},p):=\psi^{-1}\circ\theta_p\circ\psi$ and any of its powers realizes a holonomy map around each point sufficiently close to $p$. Up to reducing the neighbourhood where $\mathcal{F}$ is defined, two different $H(\mathcal{F},p)$'s cannot be composed so the only holonomy maps are powers of elements of $H(\mathcal{F},p)'s$. Thus $$H(\mathcal{F})=\bigsqcup_{p\in T(\mathcal{F})}H(\mathcal{F},p)$$ is the generating set of the holonomy pseudogroup of the germ $\mathcal{F}$ and we will call it the {\em holonomy} of $\mathcal{F}$.  If $T(\mathcal{F})$ is equal to $n_1p_1+\ldots+n_kp_k$, the holonomy pseudogroup of $\mathcal{F}$ is a disjoint union of finite cyclic groups of orders $n_1+1, \ldots, n_k+1$ and it determines the topological class of $\mathcal{F}$.

If $\mathcal{F}'=\phi (\mathcal{F})$ for a holomorphic equivalence $\phi$, then by construction $$H(\mathcal{F}')=\phi_{|E}\circ H(\mathcal{F})\circ \phi_{|E}^{-1}$$ where $\phi_{|E}$ is the \emph{global} holomorphic automorphism of $E\cong\mathbb{P}^1$ induced by $\phi$. Thus the class of $H(\mathcal{F})$ modulo global automorphisms of $E$ is an analytical invariant of $\mathcal{F}$ that we will denote by $H[\mathcal{F}]$ and call the {\em holonomy class} of $[\mathcal{F}]$. This invariant does not characterize the analytical class of the foliation in general, but in the homogeneous case it characterizes the class modulo a very particular family of birational maps.
 \begin{thm}\label{t:construction}
 Any homogeneous dicritical foliation $\mathcal{F}$ is equivalent to one obtained by blowing up a foliation $\mathcal{F}_S$ on a surface $S$ as in example \ref{ex:contracting separatrices}. The analytical class of $\mathcal{F}_S$ in a neighbourhood of the rational curve is uniquely determined by $H[\mathcal{F}]$.
 \end{thm}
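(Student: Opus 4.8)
The plan is to work on the blow-up and to exploit the transverse radial fibration throughout. Let $\pi\colon M\To(\C^2,0)$ be the blow-up, $E=\pi^{-1}(0)\cong\mathbb{P}^1$ with $E\cdot E=-1$, and set $\widetilde{\F}=\pi^{\ast}\F$, $\widetilde{\G}=\pi^{\ast}\G$. After linearizing the radial foliation $\G$ I may assume $\widetilde{\G}$ is the fibration $M=\mathrm{Tot}(\mathcal{O}(-1))\To E$, regular and transverse to the section $E$ everywhere. By hypothesis $\mathrm{Tang}(\F,\G)$ is invariant and is a union of straight lines; these lines are leaves of $\G$, and being separatrices of $\F$ they are also leaves of $\F$. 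Blowing up, $\mathrm{Tang}(\widetilde{\F},\widetilde{\G})$ is therefore a union of fibres $F_1,\dots,F_{n+3}$ of $\widetilde{\G}$ (counted with multiplicity; their number is $\deg\mathrm{Tang}(\F,\G)=n+3$, matching the $n+3$ points of Example~\ref{ex:contracting separatrices}), each of which is simultaneously a leaf of $\widetilde{\F}$. A first useful remark is that the base points $p_i=F_i\cap E$ avoid $|T(\F)|$: since $F_i$ is a leaf of $\widetilde{\F}$ transverse to $E$, the foliation $\widetilde{\F}$ is transverse to $E$ at $p_i$.

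For the existence statement I would recover the surface $S$ by reversing the construction of Example~\ref{ex:contracting separatrices}, raising $E\cdot E$ from $-1$ to $n+2$ by $n+3$ elementary transformations performed along the special fibres. To make the contractions legitimate I first complete the ruling $\widetilde{\G}$ to a ruled surface $\bar M\To E$ in which $E$ is the negative section and the fibres are compact; only the germ of $\widetilde{\F}$ along $E$ is retained, which is all the statement requires. At each $F_i$ I blow up a point $q_i\in F_i\setminus E$ close to $p_i$ and contract the strict transform of $F_i$, now a $(-1)$-curve meeting $E$ once; this raises $E\cdot E$ by one and is the identity away from $F_i$. After the $n+3$ transformations $E$ becomes a section $C$ with $C\cdot C=n+2$, $\widetilde{\G}$ becomes a ruling $\G_S$ transverse to $C$, and $\widetilde{\F}$ becomes a foliation $\F_S$ defined near $C$; the tangency divisor $T(\F)$, carried over points untouched by the surgery, is preserved. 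The inverse chain is exactly the blow-up of $S$ at the points $q_i\in C$ followed by contraction of the resulting $(-1)$-section, i.e. Example~\ref{ex:contracting separatrices}, so $\F$ is equivalent to the blow-up of $\F_S$. The point requiring care here is that each elementary transformation resolves the common-leaf tangency along $F_i$, so that $\F_S$ and $\G_S$ end up everywhere transverse, and that a fibre carrying a tangency of order $m$ is handled by $m$ successive transformations.

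For the uniqueness statement I would argue through the holonomy. The transformations are supported on the $F_i$, which lie over points disjoint from $|T(\F)|$, and the holonomy pseudogroup is generated at the tangencies with the curve; hence the holonomy of $\F_S$ along $C$, read on the transversal $C$ and taken modulo $\mathrm{Aut}(C)\cong\mathrm{Aut}(\mathbb{P}^1)$, is exactly $H[\F]$. It then suffices to prove that a bifoliated germ as in Example~\ref{ex:contracting separatrices} is determined up to analytic equivalence near $C$ by this class. The transverse ruling $\G_S$ is the key: since $\F_S$ is transverse to every fibre, it is a foliation transverse to the ruling over $C\cong\mathbb{P}^1$, that is a suspension-type (flat) structure whose only monodromy is the finite cyclic local holonomy at the weighted points of $T(\F)$. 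Given two such germs with holonomies conjugate by some $g\in\mathrm{Aut}(\mathbb{P}^1)$, I would use $g$ to identify the rulings and the tangency points, choose at each matched point a local analytic conjugacy between the corresponding finite cyclic holonomy germs, and propagate it by the flat transport of $\F_S$ to a fibrewise equivalence. The normal bundle $\mathcal{O}(n+2)$ and the multiplicity are forced by $\deg T(\F)=n$, so no further data must be matched.

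The main obstacle lies in globalizing this last step. The locally defined conjugacies must be patched into a single biholomorphism of a full neighbourhood of $C\cong\mathbb{P}^1$ respecting both foliations; their pairwise discrepancies form a cocycle with values in the sheaf of ruling-preserving automorphisms fixing $C$, and the heart of the proof is to show that this cohomological obstruction over $\mathbb{P}^1$ vanishes once the freedom in $\mathrm{Aut}(\mathbb{P}^1)$ and in the choice of local conjugacies has been used. On the existence side the corresponding delicate point, already flagged above, is to verify that the elementary transformations along the common-leaf fibres genuinely regularize the tangency, so that the output is precisely of the form prescribed by Example~\ref{ex:contracting separatrices}.
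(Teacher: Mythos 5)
Your existence step contains a fatal computational error: the elementary transformation you propose moves the tangency in the \emph{wrong} direction. Work locally near a point of $F_i$: there are coordinates $(u,v)$ in which $\widetilde{\G}=\{\dd u=0\}$, $F_i=\{u=0\}$ and $\widetilde{\F}=\{\dd u+u^{m}f(u,v)\,\dd v=0\}$ with $f$ a unit and $m=n_i\geq 1$ the tangency order. Away from the contracted curve, your transformation (blow up $q_i\in F_i\setminus E$, then contract the strict transform of $F_i$) is the substitution $v=ut$, where $t$ is an affine coordinate on the new fibre (the image of the exceptional curve). Pulling back the form gives
\[
\dd u+u^{m}f(u,ut)\,(t\,\dd u+u\,\dd t)=\bigl(1+u^{m}tf\bigr)\,\dd u+u^{m+1}f\,\dd t ,
\]
so the new fibre is again a common leaf of the transformed pair, now with tangency order $m+1$. (Test case: $\dd(ue^{v})=0$ becomes $(1+ut)\,\dd u+u^{2}\,\dd t=0$, of tangency order $2$.) Thus each of your $n+3$ transformations does raise $E\cdot E$ by one, but it also raises by one the tangency order along the special fibre; the final pair $(\F_S,\G_S)$ still has the fibres through the surgery points as common leaves, so it is not transverse and is not of the form required by Example \ref{ex:contracting separatrices} -- the very point you flagged as ``requiring care'' fails. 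The obstruction is structural, not a matter of choosing the centres better: the curves that must be contracted to reach $S$ are compactifications of the separatrices $L_{p_i}$ themselves, and no such compact curves exist inside your ruled surface $\bar M$. This is why the paper proceeds by surgery rather than by birational modifications of a fixed compactification: Lemma \ref{ptitlemme} conjugates the pair $(\F,\G)$ near $L_{p_i}$, by a fibration-preserving map, to the model pair $(\dd u+u^{n_i}\dd y,\ \dd u)$, and one glues the germ along $L_{p_i}$ to the pair of foliations around a chain of $n_i$ rational curves obtained by blowing up a regular \emph{transverse} pair $n_i$ times. The compactified separatrix is then a $(-1)$-curve, and contracting the chain curve by curve just undoes the model blow-ups, so the tangency is genuinely resolved: at the image points the pair is the regular transverse model pair, and the bookkeeping $-1+\sum n_i=n+2$ gives the self-intersection.

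Your uniqueness step also leaves precisely the hard part open: you reduce it to the vanishing of a cohomological obstruction (a cocycle in the sheaf of ruling-preserving automorphisms fixing $C$) and do not prove that vanishing; moreover the ``suspension'' structure you invoke presupposes the transversality of $(\F_S,\G_S)$, which you only obtain from the flawed existence step. The paper needs no patching at all: it first proves, via Brunella's formula, that any pair as in the statement (with $C\cdot C=n+2>2$, $\deg T(\F_S)=n$, and $\G_S$ transverse to $C$) is automatically transverse in a whole neighbourhood of $C$; then, given $\phi\in\textup{Aut}(\mathbb{P}^1)$ conjugating the holonomies, it defines the equivalence globally and pointwise -- $\phi$ dictates which $\F_S$-leaf goes to which $\F'_S$-leaf (their traces on $C$ are holonomy orbits) and which $\G_S$-fibre goes to which $\G'_S$-fibre, and by transversality each point is the intersection of one leaf of each foliation, so the map is forced, single-valued and holomorphic. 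Replacing your cocycle discussion by this direct argument is what closes the proof.
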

If we consider that \emph{local birational tranformations} stands for a map composed of sucessive changes of coordinates, contractions of compactified regular separatices and blow-ups of regular points as illustrated in Figure \ref{contra}, we deduce directly
 \begin{cor}\label{c:birational tranfs}
  Any pair of dicritical homogeneous foliations sharing the same holonomy class are locally birationally conjugated.
 \end{cor}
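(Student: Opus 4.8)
The plan is to deduce the statement directly from Theorem \ref{t:construction}, realizing the conjugacy between two homogeneous dicritical foliations with the same holonomy class as a concatenation of elementary birational moves. Let $\mathcal{F}$ and $\mathcal{F}'$ be two such foliations, say in $\DR(n)$, with $H[\mathcal{F}]=H[\mathcal{F}']$. The idea is to pass from each germ up to its bifoliated surface model, to identify the two models through the uniqueness part of Theorem \ref{t:construction}, and then to descend back to $\mathcal{F}'$, checking along the way that every intermediate arrow is one of the moves allowed in a local birational transformation.

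First I would describe the surgery taking the germ $\mathcal{F}$ to the surface model $(S,C,\mathcal{F}_S,\mathcal{G})$ of Example \ref{ex:contracting separatrices}. Blowing up the origin (the reducing blow-up) turns $\mathcal{F}$ into a foliation that is regular in a neighbourhood of the exceptional $(-1)$-curve $E$; the $n+3$ distinguished regular separatrices lift to smooth invariant curves meeting $E$ transversally which, once compactified, are rational $(-1)$-curves. Contracting these $n+3$ compactified regular separatrices raises the self-intersection of $E$ from $-1$ to $-1+(n+3)=n+2$ and produces exactly a model $(S,C,\mathcal{F}_S,\mathcal{G})$ as in Example \ref{ex:contracting separatrices}, with $C$ the image of $E$. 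Every step here — a change of coordinates, the reducing blow-up, or a contraction of a compactified regular separatrix — is one of the moves of Figure \ref{contra}. Performing the same construction for $\mathcal{F}'$ yields a model $(S',C',\mathcal{F}'_{S'},\mathcal{G}')$. Now I would invoke the uniqueness clause of Theorem \ref{t:construction}: since $H[\mathcal{F}]=H[\mathcal{F}']$, the analytical classes of $\mathcal{F}_S$ near $C$ and of $\mathcal{F}'_{S'}$ near $C'$ coincide, so there is a biholomorphism $\Phi$ from a neighbourhood of $C$ onto a neighbourhood of $C'$ sending $C$ to $C'$ and conjugating $\mathcal{F}_S$ to $\mathcal{F}'_{S'}$, which counts as a change of coordinates. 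Composing the surgery from $\mathcal{F}$ to $(S,C,\mathcal{F}_S)$, the biholomorphism $\Phi$, and the inverse of the surgery from $\mathcal{F}'$ to $(S',C',\mathcal{F}'_{S'})$ (blow-ups of $n+3$ regular points on $C'$ followed by the inverse of the reducing blow-up) produces a composition of allowed moves carrying the leaves of $\mathcal{F}$ to the leaves of $\mathcal{F}'$. Since a composition of conjugacies is a conjugacy, this is the desired local birational conjugacy.

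Here there is no need to match the $n+3$ contraction centres on the two sides: the last stage is simply the inverse of the surgery that defines $\mathcal{F}'$ from its own model, so it reproduces $\mathcal{F}'$ exactly, regardless of where $\Phi$ sends the centres coming from $\mathcal{F}$. The only points requiring care are the bookkeeping of self-intersections that makes each contraction legitimate and the verification that the blown-up and contracted curves are of the prescribed types (regular points, respectively compactified regular separatrices). The substantial content — that equal holonomy classes force the two models to be biholomorphic near their rational curves — is precisely what Theorem \ref{t:construction} supplies, so beyond this the corollary follows with no further obstacle.
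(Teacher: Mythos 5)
Your proposal is correct and follows exactly the paper's own route: the paper proves this corollary by passing each germ to its surface model via the surgery of Theorem \ref{t:construction}, invoking the uniqueness lemma to get a biholomorphism between the two models from $H[\mathcal{F}]=H[\mathcal{F}']$, and composing with the inverse surgery, noting (as you do) that unmatched blow-up centres only create indeterminacies, which is precisely why the conclusion is birational rather than biholomorphic conjugacy. The only cosmetic difference is that in the non-reduced case ($T(\mathcal{F},\mathcal{G})$ with multiple components) each separatrix is compactified by a chain of rational curves contracted iteratively, rather than by a single $(-1)$-curve, but this does not affect the argument.
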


\begin{figure}
\center{\includegraphics[scale=0.7]{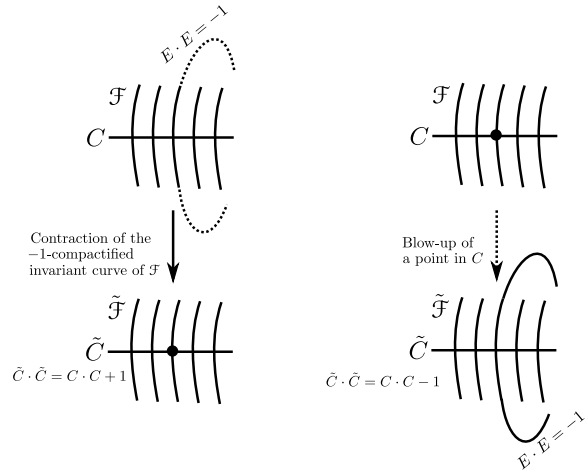}}
\caption{Local birational transformations in $\D$}\label{contra}
\end{figure}

To identify the analytical invariants other than the holonomy $H[\mathcal{F}]$, we proceed to describe the possible ways of obtaining the foliation as in Theorem \ref{t:construction}. Recall that for a homogeneous curve the holomorphic class of the tangent cone determines the analytical class of the curve. For a homogeneous dicritical foliation $\mathcal{F}\in\DR (n)$ we define $$\divR(\mathcal{F})=\{\text{Tang}(\mathcal{F},\mathcal{G})_{|E}\in\text{div}(E): \mathcal{G}\in\mathcal{D}(0), \mathcal{F} \text{ is homogeneous with respect to } \mathcal{G}\}.$$
Since $\mathcal{F}$ might be homogeneous with respect to different radial foliations, it is a non-empty subset of the set of divisors in $E$ of degree $n+3$. Each of them contains the information on the analytical class of a divisor supported on a {\em homogeneous} set of  separatrices of $\mathcal{F}$.

\begin{thm}[Invariants in $\DR$]\label{t:invariantsinDR}
  Let $\mathcal{F}_1\in\DR(n)$ and $\mathcal{F}_2$ be a germ of holomorphic foliation in $(\mathbb{C}^2,0)$. Then there exists $\varphi\in \textup{Aut}(\mathbb{C}^2,0)$ such that $\varphi_*(\mathcal{F}_1)=\mathcal{F}_2$ if and only if $\mathcal{F}_2\in\mathcal{D}^R(n)$, and there exists $\phi=\varphi_{|E}\in \textup{Aut}(E)$ such that
  \begin{itemize}
    \item $H(\mathcal{F}_2)=\phi\circ H(\mathcal{F}_1)\circ\phi^{-1}$
    \item for some $D\in \divR (\mathcal{F}_1)$ (and a posteriori for all), $\phi_*(D)\in \divR (\mathcal{F}_2)$.
  \end{itemize}
\end{thm}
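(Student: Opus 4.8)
The plan is to treat the two implications separately: the forward implication is essentially functorial, while the converse rests on the realization result of Theorem \ref{t:construction} together with the dictionary between the divisor $\divR$ and the points one blows up in Example \ref{ex:contracting separatrices}.

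For necessity, suppose $\varphi\in\textup{Aut}(\mathbb{C}^2,0)$ satisfies $\varphi_*\mathcal{F}_1=\mathcal{F}_2$. Membership in $\DR(n)$ is an analytic condition preserved by $\varphi$: regularity after one blow-up and the multiplicity are invariant, and if $\mathcal{F}_1$ is homogeneous with respect to a radial foliation $\mathcal{G}$, then $\mathcal{F}_2$ is homogeneous with respect to $\varphi_*\mathcal{G}$, which again lies in $\mathcal{D}(0)$ because the Euler field $x\partial_x+y\partial_y$ is $GL_2$-invariant, so $\varphi_*\mathcal{G}$ still has radial linear part. Hence $\mathcal{F}_2\in\DR(n)$. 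Lifting $\varphi$ to the blow-up produces $\phi=\varphi_{|E}\in\textup{Aut}(E)$, and the functoriality of the holonomy construction recalled above the statement gives $H(\mathcal{F}_2)=\phi\circ H(\mathcal{F}_1)\circ\phi^{-1}$. Finally, for any $D=\text{Tang}(\mathcal{F}_1,\mathcal{G})_{|E}\in\divR(\mathcal{F}_1)$ the tangency divisor transforms naturally, so $\phi_*D=\text{Tang}(\mathcal{F}_2,\varphi_*\mathcal{G})_{|E}\in\divR(\mathcal{F}_2)$; this proves the statement for some $D$ and, once the equivalence is established, for all of them, which is the ``a posteriori for all''.

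For sufficiency, assume $\mathcal{F}_2\in\DR(n)$ and fix $\phi\in\textup{Aut}(E)$ conjugating the holonomies together with $D_1\in\divR(\mathcal{F}_1)$ such that $D_2:=\phi_*D_1\in\divR(\mathcal{F}_2)$. Choose radial foliations $\mathcal{G}_i$ realizing $D_i=\text{Tang}(\mathcal{F}_i,\mathcal{G}_i)_{|E}$. By Theorem \ref{t:construction} each $\mathcal{F}_i$ is equivalent to the germ obtained by blowing up a bifoliated surface germ $(S_i,C_i,\mathcal{F}_{S_i},\mathcal{G}_{S_i})$ at the configuration of $n+3$ points of $C_i$ that the construction identifies, through the canonical isomorphism $C_i\cong E$, with the divisor $D_i$, and then contracting the resulting $(-1)$-curve. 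Since $\phi$ realizes $H[\mathcal{F}_1]=H[\mathcal{F}_2]$, the uniqueness part of Theorem \ref{t:construction} provides an analytic equivalence $\Psi\colon(S_1,C_1)\to(S_2,C_2)$ of bifoliated germs, and, because the class of $\mathcal{F}_{S_i}$ is manufactured directly from the holonomy data, $\Psi$ can be arranged to induce exactly $\phi$ on $C_1\cong E$. The hypothesis $\phi_*D_1=D_2$ then guarantees that $\Psi$ carries the blow-up configuration attached to $\mathcal{F}_1$ onto the one attached to $\mathcal{F}_2$. Consequently $\Psi$ lifts to a biholomorphism of the surfaces blown up at these points, conjugating $\widetilde{\mathcal{F}_1}$ to $\widetilde{\mathcal{F}_2}$ and sending the $(-1)$-curve $\widetilde{C_1}$ to $\widetilde{C_2}$; contracting both curves makes it descend to $\varphi\in\textup{Aut}(\mathbb{C}^2,0)$ with $\varphi_*\mathcal{F}_1=\mathcal{F}_2$ and $\varphi_{|E}=\phi$.

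The main obstacle is the middle step of the converse: producing a single biholomorphism $\Psi$ that simultaneously realizes the prescribed $\phi$ on the curve and matches the two divisors. Both invariants are defined only modulo $\textup{Aut}(E)$, and the content of the hypothesis is that one and the same $\phi$ witnesses the coincidence of holonomy classes and of divisor classes; the difficulty is to convert this into a single geometric map. Concretely, the equivalence furnished by the uniqueness statement is determined only up to the group of automorphisms of the bifoliated germ $(S_2,C_2,\mathcal{F}_{S_2},\mathcal{G}_{S_2})$, and one must show that this group surjects onto the normalizer of $H(\mathcal{F}_2)$ in $\textup{Aut}(E)$, so that the residual discrepancy $\Psi_{|C_1}\circ\phi^{-1}$ — which necessarily normalizes $H(\mathcal{F}_2)$ — can be absorbed; only then does the divisor condition force the blow-up configurations to correspond. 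Establishing this extension property, rather than the final blow-up-and-contract bookkeeping, is where the real work lies.
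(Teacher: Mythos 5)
Your necessity argument is correct and is essentially the paper's. The problem lies in the converse: as you yourself concede in your final paragraph, the pivotal claim that ``$\Psi$ can be arranged to induce exactly $\phi$ on $C_1\cong E$'' is asserted but never proved, and your proposed substitute --- that the automorphism group of the bifoliated germ $(S_2,C_2,\mathcal{F}_{S_2},\mathcal{G}_{S_2})$ surjects onto the normalizer of $H(\mathcal{F}_2)$ in $\textup{Aut}(E)$ --- is left as an open problem. This is precisely the step where the hypothesis that \emph{one and the same} $\phi$ conjugates the holonomies and matches the divisors gets used; everything before it is a reduction and everything after it is blow-up bookkeeping, so the proof as written is incomplete.

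The gap is genuine, but the normalizer formulation is a detour: the remedy is to never invoke the uniqueness part of Theorem \ref{t:construction} as a black box producing an uncontrolled $\Psi$ that must afterwards be corrected. Instead one builds the equivalence directly out of $\phi$. Since $\phi$ conjugates the holonomy pseudogroups, it dictates which leaf of $\mathcal{F}_{S_1}$ must go to which leaf of $\mathcal{F}_{S_2}$, and it tautologically dictates the correspondence between leaves of $\mathcal{G}_{S_1}$ and $\mathcal{G}_{S_2}$, which are parametrized by points of the curves; because the pairs $(\mathcal{F}_{S_i},\mathcal{G}_{S_i})$ are transverse at \emph{every} point of a neighbourhood of $C_i$ (the first half of the paper's lemma on such pairs, proved via Brunella's formula), the map ``send a point to the intersection of the two corresponding leaves'' is well defined, holomorphic, and restricts to $\phi$ on $C_1$ by construction --- so no residual discrepancy ever arises. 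This is exactly how the paper proves the extension half of its uniqueness lemma. In fact, the paper's own proof of Theorem \ref{t:invariantsinDR} bypasses the surface model altogether: it extends $\phi$ directly on the once-blown-up germs by imposing $\varphi_*(\mathcal{F}_1)=\mathcal{F}_2$ and $\varphi_*(\mathcal{G}_1)=\mathcal{G}_2$, where transversality off the tangency loci $S_i=\text{Tang}(\mathcal{F}_i,\mathcal{G}_i)$ defines $\varphi$ on the complement of $S_1$, and Lemma \ref{ptitlemme} supplies the holomorphic extension across $S_1$; it is here that the hypothesis $\phi_*(D)\in \divR(\mathcal{F}_2)$ enters, guaranteeing that the two tangency loci correspond under the leaf correspondence. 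If you replace your ``take an arbitrary $\Psi$ and absorb the discrepancy'' strategy by either of these direct constructions, your outline closes up.
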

Remark that in $\DR$, all the invariants can be read on the exceptional divisor $E$. Coming back to Thom's question, in the case of a homogeneous dicritical foliation, the equivalence class of the pair $(H(\mathcal{F}),S_D)$ where $S_D$ is the divisor of leaves over an element $D\in\divR(\mathcal{F})$ classifies the analytical class of the foliation. It is worth remarking that the homogeneous separatrix set $|S_D|$ is not enough to classify. We need the divisorial information.

Next we will use the previous results to construct normal forms in a geometric manner. Recall that a Weierstrass polynomial is a monic polynomial in $\mathbb{C}\{x\}[y]$.

\begin{thm}[Normal Forms in $\DR$]\label{t:normalformsinDR}
 Let $\mathcal{F}\in\DR(n)$ and suppose $D\in \divR (\mathcal{F})$. Then there exist coordinates $(x,y)$ of $(\mathbb{C}^2,0)$ where $\mathcal{F}$ is represented by a form \begin{equation}\label{eq:normalformDR}W(x,y)(x\dd y-y\dd x)+Q(x,y)\dd x\end{equation} satisfying that $W$ is a Weierstrass polynomial in $y$ of degree and order $n$, $Q$ is a homogeneous polynomial of degree $n+2$ such that $xQ=0$ represents $D$, and $Q(1,y)$ is monic. With these conditions the form is unique up to a choice of affine coordinate in $E$ and local biholomorphisms tangent to a homothety.
 \end{thm}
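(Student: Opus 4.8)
The plan is to obtain the form $W(x\,\dd y-y\,\dd x)+Q\,\dd x$ by elementary algebra after adapting coordinates, and then to spend the real effort turning the radial coefficient $W$ into a Weierstrass polynomial. Throughout I abbreviate $\eta=x\,\dd y-y\,\dd x$. First I would linearise $\G$ by Poincar\'e's theorem, so that $\G=\{\eta=0\}$ and $\F=\{\omega=0\}$ with $\omega=A\,\dd x+B\,\dd y$ of multiplicity $n+1$. A direct computation gives $\omega\wedge\eta=(xA+yB)\,\dd x\wedge\dd y$, so $\text{Tang}(\F,\G)=\{xA+yB=0\}$; by the homogeneity hypothesis this is the curve cut out by a homogeneous polynomial of degree $n+3$ representing $D$, whence $xA+yB=u\cdot P$ with $u$ a unit and $P$ homogeneous of degree $n+3$. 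Blowing up shows that the tangent cone of $\omega$ is $C\cdot\eta$ with $C$ homogeneous of degree $n$, that $T(\F)$ is the zero divisor of $C(1,t)$ on $E$, and --- crucially --- that regularity of $\widetilde{\F}$ forces $T(\F)\cap D=\emptyset$. I would therefore use a linear change of coordinates (a choice of affine coordinate on $E$) to move one point of $D$, necessarily off $T(\F)$, to the direction $x=0$; this makes $x\mid P$, say $P=xQ$ with $Q$ homogeneous of degree $n+2$, and after scaling I may assume $Q(1,y)$ monic. Replacing $\omega$ by $u^{-1}\omega$ I arrange $xA+yB=xQ$ exactly; then $x\mid B$, and writing $B=xW$, $A=Q-yW$ gives $\omega=W\eta+Q\,\dd x$. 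Here $W$ has order $n$ and, because $x=0$ lies off $T(\F)$, the coefficient $c_n$ of $y^n$ in its degree $n$ part $C$ is nonzero.

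The remaining and hardest task is to replace $W$ by a Weierstrass polynomial without destroying the homogeneity of $Q$. The admissible symmetries are the biholomorphisms $\Phi_V\colon(x,y)\mapsto(xV,yV)$ for a unit $V$, which are exactly the maps tangent to a homothety that preserve the radial foliation. Using $\Phi_V^*\eta=V^2\eta$ together with $x\,\dd V=V_y\,\eta+(xV_x+yV_y)\,\dd x$, one checks that after multiplying by the explicit unit $m=[V^{n+2}(V+xV_x+yV_y)]^{-1}$ the pulled-back form becomes $\mathcal{W}\,\eta+Q\,\dd x$ with \emph{the same} $Q$, where $\mathcal{W}$ depends on $V$. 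Passing to the chart $y=tx$, normalising $W$ to a monic polynomial of degree and order $n$ in $y$ is equivalent to making $\overline{W}(x,t)=W(x,tx)/x^{n}$ a monic polynomial of degree $n$ in $t$ with coefficients holomorphic in $x$. Expanding in powers of $x$, the homogeneous piece $V_k$ contributes a free polynomial $\mu_k(t)$ of degree at most $k$, the degree $n+k$ part of $W$ contributes $W_{n+k}(1,t)$ of degree at most $n+k$ in $t$, and the order $x^{k}$ equation reads schematically $\overline{W}^{(k)}_{\mathrm{new}}=W_{n+k}(1,\cdot)-(\text{const})\,\mu_k\,C(1,\cdot)+(\text{terms of lower order in }x)$. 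Since $C(1,t)$ has degree \emph{exactly} $n$ (this is where $c_n\neq 0$ enters) and $\mu_k$ carries $k+1$ free coefficients, I can cancel precisely the top $k+1$ degrees $t^{n+k},\dots,t^{n}$ through a triangular, invertible system, leaving degree at most $n-1$; solving recursively produces a formal $V$. The genuine analytic obstacle, and the main difficulty of the whole statement, is the \emph{convergence} of this $V$, which I would settle by a majorant estimate or a fixed point argument in a suitable Banach space of germs.

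For uniqueness, suppose $\varphi\in\textup{Aut}(\C^2,0)$ carries one such form for $\F$ to another. I would first argue that $\varphi$ respects the structure. By Theorem \ref{t:invariantsinDR} the induced automorphism $\phi=\varphi_{|E}$ acts on $\divR(\F)$ through $\phi_*$, so it must carry $D$ to $D$ and fix the marked point $x=0$ coming from the factor $x$ in $xQ$; the monic normalisation of $Q(1,y)$ then pins down the scaling, forcing $\phi$ to be affine --- this is the stated choice of affine coordinate on $E$. The tangent cone $C\cdot\eta$ next forces the linear part of $\varphi$ to be a homothety, and modulo these $\varphi$ preserves the radial foliation and is one of the maps $\Phi_V$, under which the recursive normalisation above is rigid up to the homothety constant. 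Hence any two normal forms of $\F$ attached to $D$ differ only by an affine reparametrisation of $E$ and a biholomorphism tangent to a homothety. The one delicate point here is the very first step, that the equivalence must preserve the radial foliation realising $D$; this is exactly what the action on $\divR$ in Theorem \ref{t:invariantsinDR} is designed to supply.
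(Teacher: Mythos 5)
Your reduction to the shape $W\,\eta+Q\,\dd x$ and the formal recursion are set up correctly (the computation $\Phi_V^*\eta=V^2\eta$, the fact that $Q$ survives unchanged, and the triangular system solved by Euclidean division by $C(1,t)$, which is where $c_n\neq 0$ enters, all check out). But the proof has a genuine gap exactly where you flag it: the convergence of the series $V$ is not an implementation detail to be ``settled by a majorant estimate or a fixed point argument'' --- it \emph{is} the analytic content of the theorem, and you never carry it out. The seriousness of this gap is underlined by the paper itself: in Section 4 the authors run precisely this kind of degree-by-degree normalization for the whole class $\mathcal{D}$ (Theorem \ref{t:formalnormalforms}) and state explicitly that they do not know how to prove convergence of the resulting normal forms. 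The point of Theorem \ref{t:normalformsinDR} is that in the homogeneous case one can get an \emph{analytic} normal form, and the paper obtains it by a route that never meets the convergence problem: it blows up, compactifies the separatrix over the marked point of $D$ by gluing a model via Lemma \ref{ptitlemme}, identifies the resulting neighbourhood of two crossing $(-1)$-curves with $U_{-1,-1}$ by Savelev--Ueda (Lemma \ref{l:U-1-1}), and contracts to get the pair $(\widetilde{\mathcal{F}},\widetilde{\mathcal{G}})$ on $\mathbb{CP}(1)\times\mathbb{D}$. There the coefficient $A(s,t)$ is automatically a polynomial of degree $n$ in $t$ on each compact rational fibre, so after factoring $A=u(s)r(s,t)$ and reparametrizing $\dd x=\dd s/u(s)$, the Weierstrass structure of $W$ and its holomorphy come for free. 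Until you actually produce the estimates, your argument only proves a formal version of the statement, which is strictly weaker.

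There is also a soft spot in your uniqueness argument: the claim that, after normalizing the linear part, the conjugacy ``preserves the radial foliation and is one of the maps $\Phi_V$'' is not justified. A biholomorphism tangent to the identity carrying one normal form to the other sends the linear radial foliation to \emph{some} radial foliation whose tangency divisor with the target restricts to $D$ on $E$, but nothing forces that image to be the linear one, so you cannot conclude directly that the conjugacy has the form $(x,y)\mapsto(xV,yV)$ and then invoke rigidity of your recursion. The paper sidesteps this entirely with Proposition \ref{p:bijection}: a conjugacy tangent to a homothety fixes $E$ pointwise, hence preserves the holonomy germs at the points of $T(\mathcal{F})$ as well as $Q$, and the injectivity of $\textup{hol}_{r,q}$ then forces the two normal forms to be equal. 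If you want to keep your route, you either need to prove that the radial foliation realizing $D$ can be normalized simultaneously on both sides, or replace the last step of your uniqueness argument by this holonomy-injectivity argument.
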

The Weierstrass polynomial in the normal form encodes the information on the holonomy of the associated foliation. Indeed, as we will see in section \ref{s:holonomy}, given a degree $n+2$ homogenous polynomial $Q$  as in the statement of Theorem \ref{t:normalformsinDR}, the map that assigns to any admissible Weierstrass polynomial the generator of the holonomy of the normal form obtained with the pair $(W,Q)$, is a bijection onto the space of holomorphic germs of rotations around the points on $E$ determined by  $j^n(W)$.

Since for small multiplicity, all foliations in $\mathcal{D}$ are homogenoeus we get the following
 \begin{cor}\label{c:normalforms4}
   For any germ of foliation $\mathcal{F}$ of multiplicity at most four that is regular after a blow up we can provide a finite list of analytic normal forms which yields a complete system of invariants for the analytical class of $\mathcal{F}$.
 \end{cor}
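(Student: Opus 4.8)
The plan is to deduce the statement by assembling the three structural results already established, the only real content being a finiteness count and the reduction of a residual group action. First I would invoke Theorem~\ref{t:small multiplicity}: since $\mathcal{F}$ has multiplicity at most four and is regular after one blow-up, it is homogeneous dicritical, so $\mathcal{F}\in\DR(n)$ with $n+1\le 4$, i.e. $n\in\{0,1,2,3\}$. The topological class of $\mathcal{F}$ is read off its tangency divisor $T(\mathcal{F})$, which by Klughertz's result quoted above is determined by a partition $n=n_1+\dots+n_k$. For $n\le 3$ there are only finitely many such partitions, hence finitely many topological types to treat; this is the origin of the word ``finite'' in the statement, and it organises the normal forms into finitely many families.

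Next, for each such $\mathcal{F}$ I would choose a divisor $D\in\divR(\mathcal{F})$, which is non-empty precisely because $\mathcal{F}$ is homogeneous dicritical, and apply Theorem~\ref{t:normalformsinDR}. This produces coordinates in which $\mathcal{F}$ is represented by $W(x,y)(x\,\dd y-y\,\dd x)+Q(x,y)\,\dd x$, with $W$ a Weierstrass polynomial of degree and order $n$ and $Q$ homogeneous of degree $n+2$ with $Q(1,y)$ monic and $xQ=0$ representing $D$. Within each topological family these data depend on finitely many complex coefficients, so each family is an explicit finite-dimensional normal form. The pair $(W,Q)$ is unique up to the residual freedom allowed by the theorem: a choice of affine coordinate on $E\cong\mathbb{P}^1$ together with local biholomorphisms tangent to a homothety.

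The crux is to pin down this residual ambiguity and show it is finite. Biholomorphisms tangent to a homothety induce the identity on $E$, so they fix the configuration of $n+3$ points $\{xQ=0\}\subset E$ and only rescale $W$; they can therefore be spent normalising the leading data of $W$. The affine reparametrisations of $E$ then act on the remaining $n+2$ points of $D$, the distinguished point $x=0$ (the point at infinity) being fixed. For $n\le 3$ this is a configuration of at most six points on $\mathbb{P}^1$, and I would normalise it to a standard position case by case according to the partition, for instance by sending two of the simple points to $0$ and $1$; the stabiliser of each normalised configuration inside the affine group is then finite, as it can only permute points of equal multiplicity. Recording this finite group alongside the normalised family yields the desired finite list of normal forms.

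Finally I would appeal to Theorem~\ref{t:invariantsinDR} for completeness: the analytical class of $\mathcal{F}\in\DR(n)$ is exactly the class of the pair $(H(\mathcal{F}),D)$ modulo $\mathrm{Aut}(E)$. Since $W$ faithfully records the holonomy $H(\mathcal{F})$ once $Q$ is fixed (by the bijection noted after Theorem~\ref{t:normalformsinDR}), and $Q$ records $D$, two such foliations have identical normalised data up to the residual finite group if and only if they are analytically equivalent; hence the list is a complete system of invariants. The main obstacle I anticipate is not conceptual but the explicit bookkeeping: carrying out the normalisation of the point configurations and computing the surviving finite symmetry for each partition of $n\le 3$, in particular handling the non-generic cases where several points of $D$ collide or meet the distinguished point $x=0$.
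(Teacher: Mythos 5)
Your reduction to $\DR(n)$, $n\le 3$, via Theorem \ref{t:small multiplicity} and your appeal to Theorems \ref{t:normalformsinDR} and \ref{t:invariantsinDR} follow the paper, but there is a genuine gap at the step ``choose a divisor $D\in\divR(\mathcal{F})$'': you treat this choice as if its ambiguity were absorbed by the residual group of Theorem \ref{t:normalformsinDR} (affine coordinate on $E$ plus biholomorphisms tangent to a homothety), whereas that uniqueness statement holds only for a \emph{fixed} $D$. By Theorem \ref{t:modulispacesDR}, $\divR(\mathcal{F})$ is positive-dimensional --- it equals $\textup{div}(E\setminus|T(\mathcal{F})|)(4)$ for $n=1$, the four-dimensional affine space $\textup{div}(\mathcal{F})$ for $n=2$, and a quadric therein for $n=3$ --- so distinct choices of $D$ yield genuinely inequivalent pairs $(W,Q)$ representing the same foliation. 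Your procedure therefore attaches a continuum of normal forms to a single germ, not a finite list; moreover, since Theorem \ref{t:invariantsinDR} only requires an equivalence $\phi$ to send \emph{some} element of $\divR(\mathcal{F}_1)$ into $\divR(\mathcal{F}_2)$ (not onto your chosen $D_2$), the normalized data you record is not even an invariant of the analytic class, so completeness fails as well. The finiteness you do establish --- finitely many partitions of $n$, finite stabilizers of point configurations in the affine group --- addresses the wrong ambiguity: the problem lives inside a single analytic class, not across topological types.

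The paper closes exactly this gap; it is the point of the sentence following the corollary and of Theorem \ref{t:modulispacesDR} together with Corollary \ref{c:normalformssmalln}. One restricts the choice of $D$ to the canonically defined subset $q(\mathcal{F})\subset\divR(\mathcal{F})$ of divisors of special topological type --- a single point in the support for $n=2$ (class $5p$), a point of order at least four for $n=3$ (class $4p_1+p_2+p_3$) --- and the content of Theorem \ref{t:modulispacesDR} is precisely that $q(\mathcal{F})$ is non-empty and \emph{finite} ($5$ or $1$ elements for $n=2$; generically $24$, $18$ or $6$ for $n=3$). The points of $|T(\mathcal{F})|$ together with the distinguished point of $D\in q(\mathcal{F})$ then pin down the affine coordinate up to finitely many choices; for $n=1$, where such geometric points are too few, the paper instead normalizes via the holonomy germ and imposes normalized coefficients to kill the remaining freedom. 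Any repaired version of your argument would need this analysis of $q(\mathcal{F})$, which is the actual mathematical content behind the word ``finite'' in the statement.
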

 The non-uniqueness (in general) of the normal forms for $n\leq 3$ comes from the lack of uniqueness of the radial foliation with respect to which $\mathcal{F}$ is homogeneous. However some special choice of topological class in $\divR(\mathcal{F})$ allows to conclude that we can find a finite list. For $\mathcal{F}\in\mathcal{D}(2)$ it is the class of the form $5p$ for some point $p\in E$; for $\mathcal{F}\in\mathcal{D}(3)$ it is the class of the form $4p_1+p_2+p_3$ for some $p_i\in E$.

 \subsection{Unfoldings of homogeneous dicritical foliations}

 In the light of Theorem \ref{t:construction}, we have a natural way of unfolding a dicritical homogeneous germ $\mathcal{F}\in\DR(n)$ in the space $\DR(n)$ with base space of dimension $n+3$. Indeed, we can suppose $\mathcal{F}$ is obtained from $\mathcal{F}_S$ of Theorem \ref{t:construction} by a blow up on some divisor $D=p_1+\ldots+p_{n+3}$ on the exceptional curve. When two points $p_i$ and $p_j$ coincide, we interpret that we blow up twice at the same point. By considering an affine coordinate $z\in\mathbb{C}$ in the rational curve where $p_i$ corresponds to $z_i\in\mathbb{C}$, and defining $p_i(c)=z_i+c_i$ for $c=(c_1,\ldots,c_{n+3})\in(\mathbb{C}^{n+3},0)$, the foliations $\{\mathcal{F}_c: c\in(\mathbb{C}^{n+3},0)\}$ defined by blowing up $\mathcal{F}_S$ on the divisor $D(c)=p_1(c)+\ldots+p_{n+3}(c)$ form an equisingular unfolding of $\mathcal{F}$. The knowledge of $\divR (\mathcal{F})$ and Theorem \ref{t:invariantsinDR} allow to decide which of the directions in this unfolding are non-trivial. For instance, in the cases $n=2,3$, this procedure applied to the points of the special choices of topological class in $\divR(\mathcal{F})$ of Corollary \ref{c:normalforms4} generically produce universal non-trivial equisingular unfoldings.

 In fact, by this procedure we are able to produce the first explicit examples of non-trivial unfoldings of a foliation without any special integrability properties. Up to now, the only class of foliations for which we were able to exhibit non-trivial unfoldings were foliations admitting a Louvillian first integral, that is, a holomorphic multi-valued first integral. For instance, as we will see in Section \ref{s:holonomy}, if $r\in\mathbb{C}[t]$ has degree $n\geq 2$ the pull-back of  \begin{equation}
\nu=x^{n+2}\dd \left(r\left(\frac{y}{x}\right)+x\right)
\end{equation}  via the rational map $$\Lambda(x,y,(c_{ij}))=(x,y)\cdot\left(1+\sum_{j=1, i<j}^{n-1} c_{ij}x^{i-j}y^{j}\right)$$ produces the universal equisingular unfolding of the foliation associated to $\nu$.

 The most difficult property to obtain in order to construct an unfolding is the integrability property $\Omega \wedge \dd\Omega=0$ which is not automatically satisfied for a one form $\Omega$ in at least three variables. In the theorem below, we describe a non-trivial unfolding for any foliation in the class $\DR$.

\begin{thm}[Unfoldings without first integral]\label{t:unfoldings}
Consider an element of $\DR(n)$ defined by the one-form
\[\omega=\underbrace{\left(R(x,y)+\sum_{i=0}^{n-1}a_i(x)y^ix^{n-i}\right)}_{W(x,y)}(x\dd y-y\dd x)+Q(x,y)\dd x\]  as in the statement of Theorem \ref{t:normalformsinDR} where $R$ is homogeneous of degree $n$ and $a_i\in \mathbb{C}\{x\}$ with $a_i(0)=0$. Then the one-form in $(\mathbb{C}^2,0)\times\mathbb{C}^{n-1}$ defined by $$\Omega =\left( R(x,y)+\sum_{i=0}^{n-1}a_i(x+\left<c,y\right>)y^ix^{n-i}\right)(x\dd y-y\dd x)+Q(x,y)\dd(x+\left<c,y\right>)$$ where $\left<c,y\right>=\sum_{i=1}^{n-1}c_iy^i $, defines a non-trivial equisingular unfolding of any foliation in $\mathcal{D}$ associated to $\left.\Omega\right|_{c=c_0}$ for some $c_0\in\mathbb{C}^{n-1}$. In particular, $\Omega$ satisfies
\[\Omega \wedge \dd \Omega=0.\]

For generic $a_i$'s, $\Omega$ does not admit any type of reasonably analytic first integral (meromorphic, Liouvillian, multivalued and holomorphic on the complement of a countable union of analytic sets,etc.)
\end{thm}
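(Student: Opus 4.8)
The plan is to show that, up to an invertible factor, $\Omega$ is the pull-back of a single fixed one-form on a plane; integrability, equisingularity, the control of the transverse invariants and of first integrals then all flow from this one fact. Write $t=y/x$ and set $\rho(t)=R(1,t)$, $q(t)=Q(1,t)$, $u=x+\langle c,y\rangle$. By homogeneity $R(x,y)=x^{n}\rho(t)$ and $Q(x,y)=x^{n+2}q(t)$, while $y^{i}x^{n-i}=x^{n}t^{i}$ and $x\dd y-y\dd x=x^{2}\dd t$. Substituting these into $\Omega$ and leaving $\dd u=\dd(x+\langle c,y\rangle)$ untouched, the apparently mixed expression factors as
\[\Omega=x^{n+2}\Big[\big(\rho(t)+\textstyle\sum_{i}a_i(u)\,t^{i}\big)\dd t+q(t)\,\dd u\Big].\]
The bracket is visibly $\Pi^{*}\theta$, where $\Pi(x,y,c)=(t,u)=\big(y/x,\,x+\langle c,y\rangle\big)$ is a submersion onto the $(T,U)$-plane and $\theta=\big(\rho(T)+\sum_i a_i(U)T^{i}\big)\dd T+q(T)\,\dd U$ is a fixed germ of one-form there. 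Establishing this factorisation is the structural heart of the argument and I would carry it out first.

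Granting it, the integrability $\Omega\wedge\dd\Omega=0$ is immediate. Since $\theta$ is a one-form on a surface it satisfies $\theta\wedge\dd\theta=0$ for free, so $(\Pi^{*}\theta)\wedge\dd(\Pi^{*}\theta)=\Pi^{*}(\theta\wedge\dd\theta)=0$; and scaling a one-form by a function cannot destroy this, because in $\Omega\wedge\dd\Omega$ the surviving term is $x^{2(n+2)}\Pi^{*}(\theta\wedge\dd\theta)$ while the other contains $\Pi^{*}\theta$ twice and vanishes. Thus $\Omega\wedge\dd\Omega$ vanishes on the dense open set where $x\neq0$ and $\Pi$ is a submersion, and being a holomorphic $3$-form it vanishes identically.

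For the unfolding claims I would restrict to a fibre $c=c_0$ (dropping the $\dd c_i$): one gets $\omega_{c_0}=x^{n+2}\,\Pi_{c_0}^{*}\theta$, with $\omega_{0}=\omega$. On the single blow-up the tangency of $\omega_{c_0}$ with $E$ is read on $\{x=0\}$ from the $\dd t$-coefficient $\rho(t)+\sum_i a_i(u)t^{i}$, which there reduces to $\rho(t)$ since $u|_{x=0}=0$ and $a_i(0)=0$; hence every fibre lies in $\DR(n)$ with the same tangency divisor $\{R(1,\cdot)=0\}$, the family is equisingular, and $\Omega$ is an unfolding in Mattei's sense. Non-triviality I would deduce from Theorem \ref{t:invariantsinDR}. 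The transverse data are entirely carried by the fixed $\theta$ and $\Pi_{c_0}$ induces the identity on $E$, so the holonomy class $H[\omega_{c_0}]$ is independent of $c_0$; meanwhile the element of $\divR(\omega_{c_0})$ genuinely moves, as the standard radial foliation stops homogenising $\omega_{c_0}$ for $c_0\neq0$ — its tangency acquires the critical divisor $\{x+\mu_{c_0}y=0\}$ of $\Pi_{c_0}$, which is not a union of lines. By Theorem \ref{t:invariantsinDR}, constant holonomy together with a varying $\divR$-configuration forces the fibres to be pairwise non-equivalent, which I would make effective by checking that the infinitesimal deformation $\partial_{c_k}\omega_{c_0}|_{0}$ is not a Mattei coboundary for generic $a_i$.

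The last point, the generic absence of a first integral, is where the factorisation pays off once more and is also the main obstacle. The leaves of $\Omega$ are the $\Pi$-preimages of the leaves of $\theta$, and the (connected) fibres of $\Pi$ sit inside single leaves; hence any first integral of $\Omega$ is constant along those fibres, descends to a first integral of $\theta$ of the same nature (meromorphic, Liouvillian, multivalued holomorphic), and conversely $\Pi^{*}$ lifts first integrals. The question is thereby reduced to the single planar foliation $\theta$, i.e. to the equation $q(T)\,\dd U/\dd T=-\big(\rho(T)+\sum_i a_i(U)T^{i}\big)$. I would then show that for generic analytic $a_i$ this admits no such first integral: a meromorphic one would render every leaf algebraic, and a Liouvillian one would produce, via Singer's theorem, a closed rational $\eta$ with $\dd\theta=\eta\wedge\theta$; in either case matching Taylor coefficients in $U$ imposes infinitely many polynomial constraints on the $a_i$ that fail off a meagre set. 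Proving this genericity rigorously — as opposed to the clean formal reduction to $\theta$ — is the hardest part, and is where I expect to spend most of the effort.
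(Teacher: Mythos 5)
Your factorisation $\Omega=x^{n+2}\,\Pi^{*}\theta$ is correct and is in substance the paper's own starting point: the paper writes $\Omega=u\,\Lambda^{*}\omega$ for the rational map $\Lambda(x,y,c)=\left(1+\langle c,y\rangle/x\right)\cdot(x,y)$, and your $\Pi$ is just $\Lambda$ followed by the blow-up chart $(X,Y)\mapsto(Y/X,X)$. So the integrability and equisingularity parts go through, modulo one slip: the fibres lie in $\mathcal{D}(n)$ only for $c_0$ in the Zariski open set where the critical point of $\Pi_{c_0}$ on $E$ avoids the roots of $R(1,t)$ (the paper's condition $c_1t_i\neq 1$), and they do \emph{not} in general lie in $\DR(n)$ --- the paper explicitly remarks that for $n\geq 5$ the unfolding leaves the homogeneous class. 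This slip becomes a genuine gap in your non-triviality argument, which is built on Theorem \ref{t:invariantsinDR}: that theorem presupposes membership in $\DR(n)$, so it is inapplicable off the central fibre, $\divR(\omega_{c_0})$ may well be empty for $c_0\neq 0$, and your claim that it ``genuinely moves'' is neither made precise nor established by the tangency remark you give. Moreover the theorem asserts non-triviality for \emph{all} admissible $a_i$ (and the paper proves it along \emph{every} direction in parameter space), whereas your fallback is hedged ``for generic $a_i$'' and is exactly the Mattei-coboundary computation you postpone. The paper carries out that computation: triviality along a curve $c(t)$ forces the $\dd t$-coefficient of $\Omega$ into the ideal generated by the $\dd x$- and $\dd y$-coefficients, which at $t=0$ yields $PQ=A(Q-yW)+xBW$ with $P=\sum_i c_i'(0)y^i$; writing $xB-yA=\Delta Q$, $A=P-W\Delta$ and comparing jets of order $n$ gives $-yP+x\,\textup{Jet}_n(B)=0$, hence $P=0$, a contradiction. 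This is elementary, uses only $a_i(0)=0$, and is the step your proposal is missing.

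The last claim has a second genuine gap. Your reduction of first integrals of $\Omega$ to first integrals of $\theta$ (equivalently, of the central fibre) is sound, but Singer's theorem concerns rational or algebraic foliations and Liouvillian integrals: it does not apply as stated to germs with analytic coefficients $a_i(U)$, and it cannot cover the class the theorem targets, which includes multivalued integrals holomorphic off a countable union of analytic sets. In addition, ``infinitely many polynomial constraints that fail off a meagre set'' is vacuous until you exhibit at least one $\theta$ violating them, and producing that witness is the real content. The paper's route is different and covers the whole class at once: any reasonable first integral descends to a first integral of the periodic holonomy germ $h$ on $E$; using the Riemann mapping theorem, Carath\'eodory extension and a Perez-Marco-type construction, it produces periodic germs $h$ every first integral of which has an \emph{uncountable} set of singularities, while a first integral in the stated class can only have countably many; and by the realization results (Propositions \ref{p:bijection} and \ref{p:convergence of hol}) such $h$ do occur as holonomies of the foliations in question. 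You would need either to import this holonomy argument or to find a substitute of comparable strength; the Singer route cannot deliver the statement as written.
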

Remark that we get an unfolding whose parameter space is a Zariski open set of $\mathbb{C}^{n-1}$, not just a germ of unfolding. Remark also that for $n\geq 5$, even if the initial germ $\omega$ defines a homogeneous dicritical foliation, the unfolding will not stay in the space of homogeneous dicritical foliations.  We know from the results of Mattei that the base space of the universal unfolding of an element in $\mathcal{D}(n)$ has dimension $n(n-1)/2$, so the obtained unfolding is only part of it if $n>2$. In the case of $n=2$, this number coincides with $n-1$, the dimension of parameters we got in Theorem \ref{t:unfoldings}. All these germs of unfoldings can actually be considered together and compactify the parameter space to get
\begin{cor}\label{c:universal unfolding}
  Given germs $(a,b)\in \mathbb{C}\{x\}$, $b(0)=0$ and $a(0)\in\{0,1\}$,  the foliation in $(\mathbb{C}^2,0)\times \mathbb{P}^1$ defined by $$\Omega(x,y,c)=\left(y^2+a(x)y(x+cy)+b(x)\right)(x\dd y- y\dd x)+(x+cy)^4\dd x$$ describes the universal equisingular unfolding $\{\mathcal{F}_c =\{\Omega_c=0\}:c\in\mathbb{C}\}$ of all elements in $\mathcal{D}(2)/\sim$ having the same holonomy class as $\mathcal{F}_0$. By varying $(a,b)$, we cover all the analytic classes in $\mathcal{D}(2)$.
\end{cor}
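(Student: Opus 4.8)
The plan is to obtain the statement by specialising Theorem~\ref{t:unfoldings} to $n=2$ and combining it with the normal forms of Theorem~\ref{t:normalformsinDR} and the invariants of Theorem~\ref{t:invariantsinDR}; throughout I use that, by Theorem~\ref{t:small multiplicity}, $\mathcal{D}(2)=\DR(2)$, and that, by the discussion following Corollary~\ref{c:normalforms4}, every $\F\in\mathcal{D}(2)$ carries an element of topological class $5p$ in $\divR(\F)$. First I would recognise $\Omega$ as the $n=2$ instance of the form in Theorem~\ref{t:unfoldings} attached to the divisor $5p$, realised by the homogeneous part $Q=x^4$ (whose zero divisor $xQ=x^5$ is $5p$). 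On each fibre $\{c=\mathrm{const}\}$ the restriction of $\Omega$ is $\bigl(-yW+(x+cy)^4\bigr)\dd x+xW\,\dd y$ with $W=y^2+a(x)y(x+cy)+b(x)$, which is precisely the one-form defining $\F_c$; the genuine codimension-one unfolding over the three-fold $(\mathbb{C}^2,0)\times\mathbb{P}^1$ is recovered, as in Theorem~\ref{t:unfoldings}, by reading the last term as the total differential $(x+cy)^4\,\dd(x+cy)$ (which adds the $\dd c$-component $y(x+cy)^4$), for which $\Omega\wedge\dd\Omega=0$. Thus integrability need not be recomputed.

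Next I would check that each $\F_c$ lies in $\mathcal{D}(2)$ and that the family is equisingular with constant holonomy class. In the chart $y=xt$ one has $x\dd y-y\dd x=x^2\,\dd t$, and after dividing by $x^4$ one finds that $\F_c$ is given by $\bigl(t^2+a(x)t(1+ct)+b(x)/x^2\bigr)\dd t+(1+ct)^4\,\dd x$; hence on $E=\{x=0\}$ the tangency divisor $T(\F_c)$ is cut out by the quadratic $(1+a(0)c)t^2+a(0)t+\tfrac12 b''(0)$ (taking $\mathrm{ord}\,b=2$), so $\F_c\in\mathcal{D}(2)$ and its topological type, i.e. the partition of $2$, is constant. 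The holonomy depends only on the $2$-jet of $W$, hence on $(a,b)$ up to the automorphism of $E$ matching the two charts, so the holonomy class $H[\F_c]$ is independent of $c$ and equals $H[\F_0]$; the pertinent element of $\divR(\F_c)$ is read off from $x(x+cy)^4=0$, namely the fixed point $[x=0]$ together with four times the direction $x+cy=0$, which coalesce into $5p$ exactly at $c=0$.

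To establish universality I would combine Mattei's dimension count with non-triviality. Since $\mathcal{M}(\F)=n(n-1)/2=1$ for $n=2$, the family has exactly the dimension of the universal equisingular unfolding. For non-triviality I would invoke Theorem~\ref{t:invariantsinDR}: as $c$ varies, the degree-five divisor $D(c)=[x=0]+4\,[x+cy=0]\in\divR(\F_c)$ moves, and --- the holonomy being fixed --- no element of the subgroup of $\mathrm{Aut}(E)$ realising the holonomy conjugacy carries $D(c)$ to $D(c')$ for $c\ne c'$ in general, whence $\F_c\not\cong\F_{c'}$. A non-trivial equisingular unfolding of dimension $1=\mathcal{M}(\F)$ is, by the universal property of Mattei's unfolding \cite{M}, the universal one. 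The compactification to $\mathbb{P}^1$ is forced geometrically: $c$ records the direction $x+cy=0$ of the four-fold point of $D(c)$, which ranges over all of $E\cong\mathbb{P}^1$; at $c=\infty$ one rescales $(x+cy)^4\sim c^4y^4$ to reach a member with $Q=y^4$, still in $\mathcal{D}(2)$ and with the same holonomy, so the family closes up, and by Theorem~\ref{t:invariantsinDR} its fibres exhaust the analytic classes sharing $H[\F_0]$.

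Finally, for the surjectivity onto $\mathcal{D}(2)$ I would use the bijection recorded after Theorem~\ref{t:normalformsinDR}: for the fixed $Q$, the Weierstrass polynomial $W$ runs bijectively over the holonomy data, so varying $(a,b)$ realises every holonomy class already at $c=0$ (the normalisation $a(0)\in\{0,1\}$ being achieved by scaling coordinates, admissibility of $(a,b)$ guaranteeing $\mathrm{ord}\,W=2$). Given an arbitrary $\F\in\mathcal{D}(2)$, choosing $(a,b)$ so that $\F_0$ has the holonomy of $\F$ places $\F$ in the holonomy fibre swept out by the $c$-family over that $(a,b)$, so $\F\cong\F_c$ for some $c$. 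The hard part, I expect, is the universality step: showing the $c$-direction is genuinely non-trivial and sweeps the \emph{entire} one-dimensional Mattei modulus globally over $\mathbb{P}^1$. This requires identifying the analytic modulus with an explicit $c$-dependent cross-ratio type invariant through Theorem~\ref{t:invariantsinDR}, controlling precisely the subgroup of $\mathrm{Aut}(E)$ fixing the holonomy, and treating the degenerate parameters $c=\infty$ and $c=-1/a(0)$ (where the leading coefficient $1+c\,a(0)$ of $W$ degenerates).
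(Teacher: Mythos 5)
Your overall skeleton (reduce to Theorem~\ref{t:unfoldings}, check equisingularity and constancy of the holonomy class, use Mattei's count $\mathcal{M}(\F)=1$ plus non-triviality for universality, then Theorems~\ref{t:invariantsinDR}, \ref{t:modulispacesDR} and the holonomy realization for exhaustion) is the same as the paper's, but the key step --- the identification of $\Omega$ with the integrable form of Theorem~\ref{t:unfoldings} --- is wrong as you set it up. You propose to ``read the last term as the total differential $(x+cy)^4\,\dd(x+cy)$''. Since $\dd(x+cy)=\dd x+c\,\dd y+y\,\dd c$, this replacement does not merely add the $\dd c$-component $y(x+cy)^4$: it also adds $c(x+cy)^4\,\dd y$, so the modified form restricted to a fibre $\{c=\mathrm{const}\}$ is no longer $\Omega_c$ and the family of foliations has been changed. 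Worse, the modified three-variable form is in general not integrable, so nothing is gained: take $a=0$, $b(x)=x^3$, and write $\widetilde{\Omega}=W(x\dd y-y\dd x)+(x+cy)^4\dd(x+cy)$ with $W=y^2+x^3$; a direct computation of the coefficient of $\dd x\wedge\dd y\wedge\dd c$ in $\widetilde{\Omega}\wedge\dd\widetilde{\Omega}$ gives
\begin{equation*}
\widetilde{\Omega}\wedge\dd\widetilde{\Omega}=y(x+cy)^4\left(2W+xW_x+yW_y\right)\dd x\wedge\dd y\wedge\dd c
=y(x+cy)^4\left(4y^2+5x^3\right)\dd x\wedge\dd y\wedge\dd c\neq 0.
\end{equation*}
(Indeed $2W+xW_x+yW_y$ can never vanish identically for $W\neq 0$ holomorphic, since it would force $W$ to be homogeneous of degree $-2$.) Your modified form also fails structurally to be the Theorem~\ref{t:unfoldings} form: there, $Q(x,y)$ is a fixed homogeneous polynomial independent of $c$ and the parameter enters only through the arguments of the $a_i$'s and through $\dd(x+\left<c,y\right>)$, whereas in your form $Q=(x+cy)^4$ itself depends on $c$ and $a,b$ keep the argument $x$.

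The correct link, which is how the paper argues, is a fibred change of coordinates rather than a reinterpretation of the form: conjugating by the biholomorphism $(x,y,c)\mapsto(x+cy,y,c)$ carries the family $\{\F_c\}$ of the Corollary into the $n=2$ family of Theorem~\ref{t:unfoldings} with $Q=x^4$ (up to the sign of $c$), and the integrable extension is then the one from the proof of that theorem, namely a meromorphic unit times $\Lambda^*\omega_0$ --- integrability is inherited from the pull-back, not checked by hand. This repair is not cosmetic: once the identification is made correctly, Theorem~\ref{t:unfoldings} hands you the \emph{non-triviality} of the unfolding at every admissible $c$, which is exactly the step you flagged as ``the hard part'' and attempted to get from the moving divisor $D(c)=[x=0]+4[x+cy=0]$ with an ungrounded ``in general''. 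Your remaining steps (the blow-up computation of $T(\F_c)$, the Mattei dimension count, the exhaustion of classes with fixed holonomy via Theorems~\ref{t:invariantsinDR} and \ref{t:modulispacesDR}, and the realization of all holonomies from Section~\ref{s:holonomy}) do match the paper's proof, so fixing this one step essentially completes your argument.
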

From this corollary, we deduce that we have nice parametrizations of $\mathcal{D}(2)/\sim$ to describe the equivalence relation {\em having the same holonomy class}. Indeed, by considering the parameters $(a,b,c)\in\mathbb{C}\{x\}^2\times\mathbb{P}^1$, the fibres of the projection $(a,b,c)\mapsto (a,b)$ parametrize equivalence classes. These restrictions of the parametrization are locally injective in general but not globally injective. As a consequence, up to changing coordinates, any two distinct germs in $\mathcal{D}(2)$ sharing the same holonomy class can be joined by a deformation underlying a non-trivial equisingular unfolding.

\subsection{Formal normal forms and $E$-equivalence in $\mathcal{D}$}
As was seen, the homogeneity hypothesis was a great help in the previous results. Having a radial foliation that is well related to a given foliation allowed in particular to find good coordinates. In general, we do not have such an object but some formal results based on the ideas coming from the homogeneus case can be used to determine general formal normal forms.

We start by generalizing the results in \cite{C} to all of $\mathcal{D}$ with a slight change in the type of equivalences. Two foliations are said to be (formally) $E$-equivalent and denoted $\sim_E$ (respectively $\hat{\sim}_E$),  if there exists a (formal) biholomorphism that is tangent to a homothety $\lambda\text{Id}$ sending one to the other. Geometrically, the property means that the lift of the transformation to the first blow-up on source and target fixes every point of the exceptional divisor $E$.

A monic polynomial in $\mathbb{C}[[x]][y]$ will be called a formal Weierstrass polynomial. If it converges, it is a Weierstrass polynomial.
\medskip

Let $n\geq 1$. For any  $W$ {\bf formal} Weierstrass polynomial in $y$ of degree and order $n$ and any family of complex numbers $(c_{ij})\in \mathbb{C}^{\frac{n(n-1)}{2}}$, we consider the formal foliation given by

\begin{equation}\label{eq:formalnormalform}
\F_{W,(c_{ij})}:=  W(x,y)(x\dd y-y\dd x)+\left(x^{n-1}+\sum_{\tiny \begin{array}{c}0\leq i\leq n-2 \\0\leq j\leq n-1 \\ i+j\geq n-1\end{array}}c_{ij}x^iy^j\right)x^3\dd x.
\end{equation}

\begin{thm}[Formal normal forms in $\mathcal{D}(n)$]\label{t:formalnormalforms}
Consider three distinct points $p_0,p_1,p_\infty \in E$ and $n\geq 1$. For any $\mathcal{F}\in\mathcal{D}(n)$ such that $p_0\notin |T(\F)|$, there exist a formal conjugacy $\Phi\in \widehat{ \textup{Diff}}(\mathbb{C}^2,0)$ and a \textbf{unique} pair $\{W,(c_{ij})\}$ such that
\begin{itemize}
\item $\textup{D}\Phi(p_0)=0,\ \textup{D}\Phi(p_1)=1$ and $\textup{D}\Phi(p_\infty)=\infty$
\item
$\Phi_*\F=\F_{W,(c_{ij})}$
\end{itemize}
\end{thm}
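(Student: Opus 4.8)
The plan is to build $\Phi$ as a linear map that adjusts the three marked directions, followed by a formal, degree-by-degree conjugacy that forces the coefficient of $\dd x$ into the prescribed polynomial shape; this adapts and extends the scheme of \cite{C}. Throughout I fix the coframe $\eta=x\,\dd y-y\,\dd x$, $\dd x$ and follow the decomposition of a defining form along it. The projectivised linear part $\textup{D}\Phi$ is the unique Möbius map sending $(p_0,p_1,p_\infty)$ to $(0,1,\infty)$, and I start by applying its linear representative. Since the tangency divisor $T(\F)$ is cut out on $E$ by the leading homogeneous part of the $\eta$-coefficient, the hypothesis $p_0\notin|T(\F)|$ means that the leaf of $\widetilde\F$ over $p_0$ descends to a smooth separatrix $S_0$ of $\F$ tangent to the direction $p_0=\{x=0\}$. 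Straightening $S_0$ to $\{x=0\}$ by a change of coordinates tangent to the identity (which does not disturb the marked directions) makes the $\dd y$-coefficient divisible by $x$; equivalently $\{x=0\}$ becomes invariant. Weierstrass preparation applied to the $\eta$-coefficient, with the unit absorbed into the $\dd x$-coefficient, then yields the pre-normal form
\[\omega=W(x,y)\,\eta+Q(x,y)\,\dd x,\]
where $W$ is a formal Weierstrass polynomial of degree and order $n$ in $y$ and $Q\in\C[[x,y]]$. Here $\omega(\radial)=xQ$ for $\radial=x\partial_x+y\partial_y$, so $\{xQ=0\}$ records the radial-tangency divisor of degree $n+3$, the factor $x$ corresponding to $S_0$.

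Next I reduce $Q$ by induction on total degree. A prospective correction is the flow of a formal vector field $X=\sum_d X_d$ together with a multiplicative factor $u=1+\sum_d\mu_d$, the pair $(X_d,\mu_d)$ chosen so as to preserve the Weierstrass shape of $W$. At each degree the induced change of the $\dd x$-coefficient is governed by a graded linear operator whose principal part combines multiplication by the leading coefficient $W_n$ with the Lie action of $\radial$; because $W_n$ is monic of degree $n$ in $y$ and, by $p_0\notin|T(\F)|$, has invertible $y^n$-coefficient, this operator surjects onto the span of every monomial other than $x^{n+2}$ and those of the form $x^{i+3}y^j$ with $(i,j)$ in the box $\{0\le i\le n-2,\ 0\le j\le n-1,\ i+j\ge n-1\}$. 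I may thus force $x^3\mid Q$, raise the order of $Q$ to $n+2$, and truncate $\bar Q=Q/x^3$ to the box; the surviving cokernel is exactly the box, assembling into the finite family $(c_{ij})\in\C^{n(n-1)/2}$, while the freedom left in $W$ carries the transverse (holonomy) data and the coefficient of $x^{n+2}$ is normalised to $1$ by scaling. Composing the corrections formally produces $\Phi$ with $\Phi_*\F=\F_{W,(c_{ij})}$.

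For uniqueness, if two such conjugacies $\Phi,\Psi$ both meet the derivative conditions, then $\Psi\circ\Phi^{-1}$ conjugates two normal forms and its linear part projectivises to the identity on $E$, hence $\textup{D}(\Psi\circ\Phi^{-1})=\lambda\,\textup{Id}$ is a homothety. The normalisations built into the model — $W$ monic and the coefficient of $x^{n+2}$ in $Q$ equal to $1$ — force $\lambda=1$ and trivialise the projective scaling factor, while the same homological analysis as above shows that the stabiliser of the normal form among conjugacies tangent to the identity is trivial. Therefore the pair $\{W,(c_{ij})\}$ is uniquely determined.

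The delicate point, and the step I expect to be the main obstacle, is the cokernel computation of the homological operator: proving that precisely the $n(n-1)/2$ box monomials are non-removable, and that this can be arranged \emph{simultaneously} with keeping $W$ in Weierstrass form, so that the finite moduli $(c_{ij})$ and the infinite-dimensional datum $W$ split cleanly. Concretely, the $y$-degree restriction $j\le n-1$ comes from Weierstrass division by the monic $W_n$, the order and $x$-range restrictions from the radial action, and it is exactly the invertibility granted by $p_0\notin|T(\F)|$ that makes the operator surjective on the complementary monomials and lets the induction close at every degree.
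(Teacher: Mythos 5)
Your proof follows the same architecture as the paper's (adapt the three points linearly, reach a pre-normal form $W(x\dd y-y\dd x)+Q\dd x$ with $W$ Weierstrass, then reduce $Q$ degree by degree against a homological operator whose cokernel is the box), but the step you yourself flag as the main obstacle --- the identification of the image of that operator --- is wrong as you describe it, and this is a genuine gap, not a technicality. The operator ``multiplication by $W_n=R$ combined with the Lie action of $\radial$'' cannot have the box as cokernel: the Lie action of $\radial$ is scalar on every homogeneous piece (it multiplies a homogeneous polynomial of degree $d$ by $d$, a homogeneous vector field of degree $N$ by $N-1$), so it contributes nothing to the shape of the image, and modulo multiplication by $R$ alone the cokernel in each degree $m\ge n+2$ has dimension $n$ (spanned by $x^{m-j}y^j$, $0\le j\le n-1$, by Euclidean division in $\mathbb{C}[t]$ --- which is all that the invertibility of the $y^n$-coefficient of $R$, i.e.\ $p_0\notin|T(\F)|$, buys you). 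That would leave $n$ undetermined coefficients in \emph{every} degree, hence infinitely many moduli rather than $n(n-1)/2$. The missing second generator of the image is multiplication by the leading part $Q_{n+2}$ of $Q$ itself: coordinate changes preserving the radial foliation, $X=\gamma\radial$, do not disturb $W$ but, coupled with multiplication by a suitable unit, move the radial contraction $\omega(\radial)=xQ$ by multiples of the degree-$(n+3)$ cone $xQ_{n+2}$ of $\textup{Tang}(\F,\radial)$. It is the coprimality of $R$ and $xQ_{n+2}$ --- precisely the regularity of $\F$ after one blow-up --- that cuts the joint cokernel down to the finite box; this is exactly the paper's key Lemma, asserting the unique decomposition $H=q+AR+B\,\omega_{\radial}(V_{n+2})$ with $q$ in the span of $y^jx^{n+2+N-j}$, $N\le j\le n-1$. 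In particular the restrictions $i\le n-2$ and $i+j\ge n-1$ in the box come from division by $xQ_{n+2}$ (normalized so that its $x^{n+3}$-coefficient is $1$), not from any ``radial action''.

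The uniqueness argument also rests on a false claim: the stabilizer of $\F_{W,(c_{ij})}$ among formal conjugacies tangent to the identity is \emph{not} trivial --- it contains $\exp(uV)$ for every formal function $u$, where $V$ is a vector field generating the foliation, since these maps fix each leaf. The paper says so explicitly: the normalizing map $\Phi$ is genuinely non-unique; only the pair $\{W,(c_{ij})\}$ is unique. The correct route is the one the homological analysis itself forces: if a tangent-to-identity map and a unit conjugate two normal forms, then the lowest-degree part of their difference lies in the image of the homological operator; since the space of differences of normal forms (a polynomial of degree $\le n-1$ in $y$ times $x\dd y-y\dd x$, plus a box combination times $\dd x$) is a \emph{complement} of that image --- the uniqueness half of the decomposition lemma --- the difference vanishes degree by degree, even though the conjugacy need not be the identity. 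So both your existence step and your uniqueness step hinge on the two-generator decomposition that your sketch omits.
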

In particular, two formal normal forms $\F_{W,(c_{ij})}$ and $\F_{W',(c'_{ij})}$ are formally $E$-equivalent if and only if they are equal. On the other hand we know from \cite{C} that formal/analytic rigidity takes place in $\mathcal{D}$ so formal invariants are in fact analytic invariants. Thus, we define for three distinct points $p_0,p_1,p_\infty \in E$ and any class of $E$-equivalence $[\mathcal{F}]$ such that $p_0\notin |T(\F)|$, \[{\bf c}[\F](p_0,p_1,p_\infty):=(c_{ij})\in \mathbb{C}^{\frac{n(n-1)}{2}}.\]

These new invariants complement the holonomy invariants of the foliation up to $E$-equivalence as is stated in next

\begin{thm}[$\sim_E$-invariants]\label{t:invariants}
  $\mathcal{F}_1\sim_E\mathcal{F}_2\in \mathcal{D}$ if and only if $\mathcal{F}_1\in\D$, $H(\mathcal{F}_1)=H(\mathcal{F}_2)$ and
  $${\bf c}[\mathcal{F}_1](p_0,p_1,p_\infty)={\bf c}[\mathcal{F}_2](p_0,p_1,p_\infty)$$
for some (and hence for all) choice of three distinct points $p_0,p_1,p_\infty\in E$.
\end{thm}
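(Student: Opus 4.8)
The plan is to prove the two implications separately, viewing $H(\F)$ and $\mathbf{c}[\F]$ as complementary halves of a complete invariant and reducing the analytic classification to an equality of the formal normal forms of Theorem~\ref{t:formalnormalforms}. For necessity, a map realizing $\sim_E$ is a biholomorphism $\Psi$ tangent to a homothety $\lambda\,\text{Id}$, so its lift to the blow-up induces $[\lambda\,\text{Id}]=\text{Id}$ on $E\cong\mathbb{P}^1$; this lift carries the tangency divisor $T(\F_1)$ onto $T(\F_2)$ while fixing $E$ pointwise, forcing $T(\F_1)=T(\F_2)$. Because the holonomy transforms by $H(\F_2)=\Psi_{|E}\circ H(\F_1)\circ\Psi_{|E}^{-1}$ and $\Psi_{|E}=\text{Id}$, I get $H(\F_1)=H(\F_2)$ on the nose. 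That $\F_1\in\D$ is automatic, since regularity after one blow-up is preserved by biholomorphisms. For the $\mathbf{c}$-invariant, if $\Phi_2$ is a normalized formal conjugacy putting $\F_2$ in the form $\F_{W_2,(c_{ij})}$, then $\Phi_2\circ\Psi$ conjugates $\F_1$ to the same $\F_{W_2,(c_{ij})}$ and induces on $E$ the same M\"obius map as $\Phi_2$; by the uniqueness clause of Theorem~\ref{t:formalnormalforms} this forces $\mathbf{c}[\F_1]=\mathbf{c}[\F_2]$. Running this for every admissible triple $(p_0,p_1,p_\infty)$ also yields the ``for some, hence for all'' clause once sufficiency is in place.

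For sufficiency, assume $\F_1\in\D$, $H(\F_1)=H(\F_2)$ and $\mathbf{c}[\F_1](p_0,p_1,p_\infty)=\mathbf{c}[\F_2](p_0,p_1,p_\infty)=:(c_{ij})$ for an admissible triple; note $|T(\F_1)|=|T(\F_2)|$ since the holonomy germs are based at the tangency support, so $p_0$ avoids both. By Theorem~\ref{t:formalnormalforms} I pick normalized formal conjugacies $\Phi_i$ with $(\Phi_i)_*\F_i=\F_{W_i,(c_{ij})}$, the \emph{same} $(c_{ij})$ by hypothesis. Since $\Phi_1,\Phi_2$ induce the same M\"obius map $\text{D}\Phi$ on $E$ (both send $p_0,p_1,p_\infty$ to $0,1,\infty$) and $H(\F_1)=H(\F_2)$, the two normal forms have equal holonomy:
\[H(\F_{W_1,(c_{ij})})=\text{D}\Phi\circ H(\F_1)\circ\text{D}\Phi^{-1}=\text{D}\Phi\circ H(\F_2)\circ\text{D}\Phi^{-1}=H(\F_{W_2,(c_{ij})}).\]

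The crux is to deduce $W_1=W_2$. Here I would invoke the holonomy realization of Section~\ref{s:holonomy} (the bijection following Theorem~\ref{t:normalformsinDR}, in the version adapted to the forms~\eqref{eq:formalnormalform}): with $(c_{ij})$ held fixed, the assignment $W\mapsto H(\F_{W,(c_{ij})})$ is a bijection onto the holomorphic germs of rotations around the points of $E$ determined by $j^n(W)$, and its injectivity gives $W_1=W_2$. Then $\F_{W_1,(c_{ij})}=\F_{W_2,(c_{ij})}$ identically, so $\Psi:=\Phi_2^{-1}\circ\Phi_1$ conjugates $\F_1$ to $\F_2$ and induces $\text{D}\Phi^{-1}\circ\text{D}\Phi=\text{Id}$ on $E$, hence is tangent to a homothety; this gives $\F_1\,\hat{\sim}_E\,\F_2$. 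Finally the formal/analytic rigidity in $\D$ from \cite{C} upgrades this formal $E$-equivalence to an analytic one, so $\F_1\sim_E\F_2$.

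I expect the main obstacle to be precisely the injectivity of $W\mapsto H(\F_{W,(c_{ij})})$ for the \emph{non-homogeneous} normal forms~\eqref{eq:formalnormalform}: unlike the homogeneous situation of Theorem~\ref{t:normalformsinDR}, the form $Q$ is now replaced by a polynomial depending on $(c_{ij})$, and one must check that these lower-order terms do not disturb the correspondence between the Weierstrass coefficients and the periodic holonomy germ at each tangency point. The natural way to control this is the blow-up reduction: in the chart $y=tx$ one divides by $x^{n+2}$ to reach a one-form $\bigl(R(1,t)+\sum_i a_i(x)t^i\bigr)\dd t+\bigl(1+\cdots\bigr)\dd x$ whose tangency points on $E$ are the zeros of $R(1,t)=j^n(W)(1,t)$ independently of $(c_{ij})$, and then one reads off, from the periodic holonomy germ at each such zero, how it pins down the local Weierstrass data with the $(c_{ij})$ entering only at higher order. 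A secondary point to state explicitly is that the rigidity of \cite{C} is used in the form ``formal $E$-equivalence of analytic elements of $\D$ implies analytic $E$-equivalence'', which is what lets the argument proceed without first establishing convergence of the intermediate normal form.
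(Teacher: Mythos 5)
Your proof is correct and follows essentially the same route as the paper: reduce both foliations to the formal normal forms of Theorem \ref{t:formalnormalforms}, use equality of the ${\bf c}$-invariants to identify the $Q$-parts, use injectivity of the holonomy correspondence to identify the $W$-parts, and conclude by the formal/analytic rigidity of \cite{C}. The injectivity you flag as the main obstacle is exactly Proposition \ref{p:bijection}, which the paper proves for formal one-forms with arbitrary $q(t,x)\in\mathbb{C}[[t,x]]$ coprime to $r$, so it already covers the non-homogeneous normal forms via precisely the blow-up reduction you describe.
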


 As in the homogeneous case, the parameters $(c_{ij})$ are independent of the holonomy. Indeed, if we fix a point $(c_{ij})\in \mathbb{C}^{\frac{n(n-1)}{2}}$ and consider the map sending each admisible formal degree $n$ Weierstrass polynomial $W$ to the (formal) generators of the holonomy of $\mathcal{F}_{W,(c_{ij})}$ , we obtain a bijection onto the space of formal rotations about the points defined by $j^n(W)$ on $E$. We do not know if the preimage of a convergent element by this map is convergent in general. This would be enough to prove the convergence of the normal forms of Theorem \ref{t:formalnormalforms}.

The condition of homogeneity of a foliation in $\mathcal{D}$ can be read in the jet of order $2n+1$ of any formal one-form $\omega$ representing it. In particular we can identify the set of homogeneous dicritical foliations  by the jet of order $2n+1$ of its normal form. 
Next we give a geometric interpretation for some of the invariants obtained. They are related to the invariants obtained in the homogeneous case.

For any $k\in \mathbb{N}$ and $A\subset E$,  let $\textup{div}(E\setminus A)(k)$ denote the set of \emph{positive} divisors in $E\setminus A$ of degree $k$. If $A=\emptyset$, this space is equivalent to the projectivisation of the space of homogeneous polynomials in two variables of degree $k$, which has the structure of $\mathbb{P}^k$.
Given $\mathcal{F}\in\mathcal{D}(n)$, we can define a subset of $\textup{div}E(n+3)$ associated to $\mathcal{F}$ as follows:
\begin{equation}
  \textup{div}(\mathcal{F})=\{\textup{Tang}(\mathcal{F},\mathcal{G})_{|E}\in\textup{div}E(n+3): \mathcal{G}\in\mathcal{D}(0)\}\label{eq:div[F]}
\end{equation}
It is a linear affine subspace of $\textup{div} E(n+3)$ of dimension four, regardless of $n$. By construction $\text{div}(\mathcal{F})\subset\text{div}(E\setminus |T(\mathcal{F})|)$. If $\mathcal{F}_1$ and $\mathcal{F}_2$ are $E$-equivalent then obviously $\textup{div}(\mathcal{F}_1)=\textup{div}(\mathcal{F}_2)$.
There exists an equivalence relation $\sim_{T(\mathcal{F})}$ on $\textup{div}E(n+3)$ depending only on $T(\mathcal{F})$ whose classes correspond precisely to a subset of the form (\ref{eq:div[F]}). Indeed, given a divisor \[D=r_1p_1+\cdots+r_kp_k\in\textup{div}E(n)\] we say that two divisors $D_1,D_2\in\textup{div}E(n+3)$ are $D$-equivalent, and we denote it by $D_1 \sim_D D_2$, if there exist homogeneous polynomials in two variables $P_1,P_2$ and $R$ defining $D_1,D_2$ and $D$ respectively satisfying that $P_2=P_1+RQ$ for some homogenoeus polynomial $Q$ of degree $3$. On the other hand, denote by $\textup{Rot}(D)$ the set of $k$-uples $(h_1,\ldots,h_k)$ where each $h_i:(E,p_i)\rightarrow (E,p_i)$ is a holomorphic germ locally conjugated to the rotation by angle $2\pi/(r_i+1)$ and $${\bf E}=\bigcup_{D\in\textup{div}E}{\bf E}_D\textup{ where }{\bf E}_D=\frac{\textup{div}(E\setminus |D|)(\textup{deg}(D)+3)}{\sim_D}\times\textup{Rot}(D)$$ with its natural projection ${\bf E}\rightarrow \textup{div}E$. We get a natural map $\mathfrak{I}:\mathcal{D}/\sim_E\rightarrow {\bf E}$ given by \[ \mathfrak{I}\left(\left[\mathcal{F}\right]\right)= ([\textup{div}(\mathcal{F})],H(\mathcal{F}))\in{\bf E}_{T(\mathcal{F})}\]
which is well defined since $E$-equivalences fix any point on $E$, where all invariants we deal with are computed.

\begin{thm}\label{thm:invariantsinD}
 The map $\mathfrak{I}$ is onto. Its fiber over an element in ${\bf  E}_D$ is biholomorphic to $\mathbb{C}^M$ where $M=\max \left(0,\frac{(\deg D-2)(\deg D-1)}{2}\right)$.
\end{thm}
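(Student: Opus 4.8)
The plan is to analyze the map $\mathfrak{I}:\mathcal{D}/\sim_E\rightarrow {\bf E}$ by fixing a fiber's target data and counting the formal/analytic degrees of freedom that remain. The key earlier input is Theorem \ref{t:formalnormalforms}: every $\mathcal{F}\in\mathcal{D}(n)$ (with $p_0\notin|T(\F)|$) has a \emph{unique} formal normal form $\F_{W,(c_{ij})}$, and by the formal/analytic rigidity from \cite{C} cited after that theorem, the formal invariants are genuine analytic invariants. So the $\sim_E$-class of $\mathcal{F}$ is pinned down by the pair $(W,(c_{ij}))$, where $W$ is a Weierstrass polynomial of degree and order $n$ and $(c_{ij})\in\mathbb{C}^{n(n-1)/2}$. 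The first thing I would verify is how the two coordinates of $\mathfrak{I}$, namely $[\textup{div}(\mathcal{F})]\in{\bf E}_D$ and $H(\mathcal{F})\in\textup{Rot}(D)$, are encoded in $(W,(c_{ij}))$: by the remark following Theorem \ref{t:normalformsinDR} and the discussion after Theorem \ref{t:invariants}, fixing $(c_{ij})$ makes the assignment $W\mapsto H(\F_{W,(c_{ij})})$ a bijection onto the space of formal rotations about the points cut out by $j^n(W)$. Hence the holonomy factor $H(\mathcal{F})$ together with the divisor $|T(\mathcal{F})|=D$ exactly recovers $W$.

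With that bijection in hand, the strategy is to show that after fixing an element of ${\bf E}_D$ — equivalently fixing $D=T(\mathcal{F})$ together with both $[\textup{div}(\mathcal{F})]$ and the rotations $H(\mathcal{F})$, hence fixing $W$ — the remaining freedom is precisely the choice of $(c_{ij})$ subject to the constraint that $[\textup{div}(\F_{W,(c_{ij})})]$ equals the prescribed class in ${\bf E}_D$. So I would first compute $\textup{div}(\F_{W,(c_{ij})})$ explicitly. From the definition (\ref{eq:div[F]}), $\textup{div}(\mathcal{F})$ is the degree-$(n+3)$ tangency divisor of $\mathcal{F}$ with an arbitrary radial $\mathcal{G}\in\mathcal{D}(0)$; computing $\textup{Tang}(\F_{W,(c_{ij})},\mathcal{G})$ for the form in (\ref{eq:formalnormalform}) reduces to reading off the homogeneous polynomial factor $x^3\big(x^{n-1}+\sum c_{ij}x^iy^j\big)$, which represents the divisor modulo the $\sim_D$ relation (adding multiples of the defining polynomial $R$ of $D=T(\F)$ times a cubic). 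The $D$-equivalence in ${\bf E}_D$ quotients exactly by the ideal $R\cdot(\text{cubics})$, so the class $[\textup{div}(\F_{W,(c_{ij})})]$ depends on $(c_{ij})$ through a linear map, and surjectivity of $\mathfrak{I}$ onto ${\bf E}_D$ follows once I check this linear map is onto the four-dimensional affine space $\textup{div}(\mathcal{F})$ modulo $\sim_D$.

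To pin down the fiber dimension $M$, I would identify the fiber of $\mathfrak{I}$ with the set of $(c_{ij})\in\mathbb{C}^{n(n-1)/2}$ that leave $[\textup{div}(\F_{W,(c_{ij})})]$ fixed, i.e.\ the kernel (a linear translate thereof) of the affine map $(c_{ij})\mapsto[\textup{div}]$ just described. The homogeneous polynomial of degree $n+2$ parametrizing $\textup{div}(\F_{W,(c_{ij})})$ is $x^3\cdot x^{n-1}+x^3\sum c_{ij}x^iy^j$, and the monomials $x^iy^j$ appearing (those with $0\le i\le n-2$, $0\le j\le n-1$, $i+j\ge n-1$) must be counted modulo the subspace $R\cdot(\text{homogeneous cubics in }x,y)$. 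Since $D=T(\F)$ has degree $\deg D=n$ and $R$ has degree $n$, the cubic multiples span a $4$-dimensional space, so the number of $(c_{ij})$-directions that survive nontrivially into $[\textup{div}]$ is $\dim\{\text{admissible }x^iy^j\}$ minus the rank of reduction against $R\cdot(\text{cubics})$. A direct count of the admissible monomials gives $n(n-1)/2$, and subtracting the contribution that actually moves $[\textup{div}]$ yields a fiber dimension of $\frac{(n-2)(n-1)}{2}=\frac{(\deg D-2)(\deg D-1)}{2}$, truncated below by $0$ to cover the low-degree cases $n\le 1$ where there are no free $c_{ij}$.

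The main obstacle I anticipate is the linear-algebra bookkeeping of the last paragraph: one must be careful that the $\sim_D$ relation is quotienting by $R\cdot(\text{cubics})$ where $R$ factors according to the multiplicities $r_i$ of $D$, and that the admissible monomial set for $(c_{ij})$ interacts with this quotient \emph{transversally} in exactly the four directions needed for surjectivity onto $\textup{div}(\mathcal{F})$, leaving a kernel of the claimed dimension. In particular I must confirm that the four-dimensionality of $\textup{div}(\mathcal{F})$ (noted in the excerpt as holding regardless of $n$) is consistent with the codimension count, i.e.\ that the map $(c_{ij})\mapsto[\textup{div}]$ has image of the correct affine dimension and the formula $M=\max(0,(\deg D-2)(\deg D-1)/2)$ emerges as $\dim\ker$. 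The remaining verifications — that distinct surviving $(c_{ij})$ give non-$E$-equivalent foliations (immediate from the uniqueness in Theorem \ref{t:formalnormalforms}) and that the fiber carries the claimed complex-analytic (indeed affine-linear, hence biholomorphic to $\mathbb{C}^M$) structure — are then formal consequences.
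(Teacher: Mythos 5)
Your parametrization of the fiber is the easy half of the paper's argument, and on that half you are essentially aligned with it: by Theorem \ref{t:formalnormalforms} and Proposition \ref{p:bijection}, the class $[D_1]$ pins down the homogeneous part $Q_{n+2}$, the holonomy pins down $W$ once $Q$ is known, and the remaining coefficients $(c_{ij})$ with $i+j\geq n$ give an injection of $\mathfrak{I}^{-1}({\bf e})$ into $\mathbb{C}^M$ — this is exactly how the paper obtains injectivity and the count $M$. The genuine gap is that everything in your proposal happens at the level of \emph{formal} normal forms, whereas $\mathfrak{I}$ is defined on classes of \emph{convergent} foliations; so both surjectivity statements (of $\mathfrak{I}$ onto ${\bf E}$, and of the fiber map onto $\mathbb{C}^M$) require producing a holomorphic foliation realizing the prescribed data, and this cannot be dismissed as a ``formal consequence.'' Concretely: to realize a prescribed holonomy tuple $(h_1,\ldots,h_k)\in\textup{Rot}(D)$ you would need $W=\textup{hol}_{r,q}^{-1}(h_1,\ldots,h_k)$, but this inverse is only known to be a \emph{formal} Weierstrass polynomial — the paper states explicitly, right after Theorem \ref{t:invariants}, that convergence of this preimage is an open problem (it would imply convergence of the normal forms of Theorem \ref{t:formalnormalforms}). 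So your linear-algebra surjectivity only yields formal foliations with the right invariants, not elements of $\mathcal{D}/\sim_E$.

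The paper fills this hole with geometry, and that is the bulk of its proof. For surjectivity of $\mathfrak{I}$: it starts from the polynomial foliation $r(s)\,\dd s+q(s)\,\dd x$, which realizes $(D,[D_1])$ but with the wrong holonomy, and performs a surgery near each $p_i$, cutting out a saturated neighbourhood and regluing (via Lemma \ref{ptitlemme}) the convergent local models of Proposition \ref{p:convergence of hol} — the one case where $\textup{hol}^{-1}$ is known to preserve convergence — so as to impose the holonomies $h_i$ without moving $T(\mathcal{F})$ or $\textup{div}(\mathcal{F})$; contracting the $(-1)$-curve then produces a genuine element of $\mathcal{D}$. For surjectivity of the fiber map: given $(c_{ij})\in\mathbb{C}^M$, the paper approximates the (possibly divergent) normal form by a \emph{polynomial} one whose holonomy agrees with $h$ to high order, performs the same surgery with gluing maps very tangent to the identity, and then invokes a cohomological rigidity statement, $H^1\left(E,\textup{Diff}_{N}(E)\right)=0$ (a refinement of Grauert's theorem on neighbourhoods of $(-1)$-curves), to guarantee that the glued foliation admits a representative $1$-form agreeing with the polynomial normal form to degree $2n$, hence having the prescribed $(c_{ij})$. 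None of this machinery appears in your proposal, and without it the surjectivity claims are unproven. Two smaller inaccuracies: the bijection of Proposition \ref{p:bijection} is $\textup{hol}_{r,q}$, so $W$ is determined by the holonomy and $Q$ \emph{jointly}, not by the holonomy and $|T(\mathcal{F})|$ alone; and ``onto the four-dimensional affine space $\textup{div}(\mathcal{F})$ modulo $\sim_D$'' is garbled, since $\textup{div}(\mathcal{F})$ is precisely one $\sim_D$-class, i.e.\ a single point of the quotient — the correct target is $\textup{div}(E\setminus|D|)(\deg D+3)/\sim_D$.
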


This Theorem also proves that all the invariants in Theorems \ref{t:invariantsinDR} and \ref{t:invariants} are realized by foliations in $\mathcal{D}$.

\corr{ To describe the space $\mathcal{D}/\sim$ we just need to remark that there is a natural action of the group $GL(2,\mathbb{C})$ on $\mathcal{D}/\sim_E$ once we have chosen coordinates $(x,y)$ in $(\mathbb{C}^2,0)$. The action associates to each matrix the natural linear transformation in the two variables $(x,y)$. The quotient is precisely $\mathcal{D}/\sim$.   This action preserves fibers of $\mathfrak{I}$, and actually the map $\mathfrak{I}$ is equivariant with respect to the natural homomorphism $GL(2,\mathbb{C})\rightarrow \text{Aut}(E)$. Hence we can define a surjective map $\overline{\mathfrak{I}}:\mathcal{D}/\sim\rightarrow {\bf E}/\text{Aut}(E)$ whose fibers are as in Theorem \ref{thm:invariantsinD}.}

The paper is organized as follows: in Section 2, we treat all Theorems concerning single homogeneous dicritical foliations. Section 3 is devoted to unfoldings of homogeneous foliations. At last, in Section 4, we develop the formal arguments to prove the classification results in $\mathcal{D}$.

We are thankful to G. Casale, J-F. Mattei, L. Ortiz, E. Paul, E. Rosales, P. Sad,  E. Salem and L. Teyssier for useful conversations on the subject of the paper.

\section{Classification in $\DR$}

\subsection{Compactication of separatrices of homogeneous foliations}
In this subsection, we give the construction for the proof of Theorem \ref{t:construction}. It is based on an idea of F. Loray (see \cite{Lo}) of extending germs of foliations along some separatrix by compactifying the leaf to a rational curve in some foliated complex surface. The hypothesis on the homogeneity of the foliations will allow us to make adequate choices for the extended foliations.

Given an holomorphic regular foliation $\mathcal{F}$ around an embedded curve $C$ in a complex surface $S$, we define the tangency divisor $T(\mathcal{F})=\text{Tang}(\mathcal{F},C)$, its holonomy $H(\mathcal{F})=\bigsqcup _{p\in|T(\mathcal{F})|}H(\mathcal{F},p)$ and the holonomy class $H[\mathcal{F}]$ as the class of $H(\mathcal{F})$ modulo automorphisms of $C$.

Given a foliation $\mathcal{F}\in\DR (N)$, we consider a radial foliation $\mathcal{G}$ such that
\[\text{Tang}(\mathcal{F},\mathcal{G})=n_1L_{p_1}+\ldots n_kL_{p_k}\] where $L_{p_i}$ is the leaf of $\mathcal{F}$ through the point $p_i\in E\setminus
|T(\mathcal{F})|$. By construction, we can find local coordinates $(u,y)$ around each point $p_i$ where $p_i=(0,0)$, $E=\{y=0\}$,  $\mathcal{G}=\{\dd u=0\}$ and for some unit $f$, $\mathcal{F}=\{\dd u+u^nf(u,y)\dd y=0\}$. The next lemma shows that we can find local normalizing coordinates for the pair $(\mathcal{F},\mathcal{G})$ around each point $p_i$.

\begin{lem}\label{ptitlemme}
Let us consider the two germs of forms
\[
\omega_{1}=\dd u+u^{n}f\left(u,y\right)\dd y\qquad\omega_{2}=\dd u+u^{n}\dd y
\]
where $n\geq 1$ and $f$ is local unit. Then the induced germs of foliations are analytically
conjugated by a conjugacy of the form $\left(u,y\right)\mapsto\left(u,y\left(\cdots\right)\right)$.\end{lem}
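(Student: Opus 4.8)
The plan is to look for the conjugacy directly in the prescribed fibred form $\Phi(u,y)=(u,\psi(u,y))$, modifying only the transverse coordinate, and to turn the conjugacy requirement into a single scalar equation on $\psi$. Pulling back $\omega_2$ gives
\[\Phi^*\omega_2=\Phi^*(\dd u+u^n\dd y)=(1+u^n\psi_u)\dd u+u^n\psi_y\,\dd y,\]
and this one-form is colinear with $\omega_1=\dd u+u^nf(u,y)\,\dd y$ exactly when
\[\psi_y-f(u,y)\,u^n\,\psi_u=f(u,y).\]
Thus $\Phi_*\F_1=\F_2$ (where $\F_i$ is the foliation induced by $\omega_i$) is equivalent to solving this first-order linear PDE for a holomorphic germ $\psi$ at the origin.

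First I would impose the initial condition $\psi(u,0)=0$, which is precisely what the announced form $(u,y(\cdots))$ demands, namely that $\{y=0\}$ be preserved. The curve $\{y=0\}$ is non-characteristic because the equation is already solved for the leading derivative $\psi_y$ with holomorphic right-hand side, so the Cauchy--Kovalevskaya theorem yields a unique holomorphic germ $\psi$ with $\psi(u,0)=0$. A more geometric route to the same function, avoiding any appeal to Cauchy--Kovalevskaya, is to notice that the characteristics of the PDE satisfy $\dot y=1$, $\dot u=-f u^n$ and $\dot\psi=f$: the first two equations are exactly the ODE of the leaves of $\F_1$, so $\psi$ is obtained by integrating $f$ along the leaves of $\F_1$ issued from $\{y=0\}$, after inverting the flow map $(a,y)\mapsto(u(y;a),y)$, which is a biholomorphism near $0$ since $u(0;a)=a$.

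It then remains to check that $\Phi$ is a germ of biholomorphism of the stated shape. Since $\psi$ vanishes on $\{y=0\}$ we may write $\psi=y\,v(u,y)$; evaluating the PDE along $\{y=0\}$ and using $\psi_u(u,0)=0$ gives $v(u,0)=\psi_y(u,0)=f(u,0)$, hence $v(0,0)=f(0,0)\neq0$. Therefore $v$ is a unit, $\Phi(u,y)=(u,y\,v(u,y))$ has non-vanishing Jacobian at the origin and has exactly the form $(u,y(\cdots))$; by construction it conjugates the two foliations, which finishes the argument.

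There is in fact little to fight against here: the whole content is the reduction to a non-characteristic Cauchy problem, and the only point deserving a moment's care is the behaviour near $\{u=0\}$, where $\F_1$ is tangent to the radial foliation $\G$ to order $n$ and the coefficient of $\psi_u$ degenerates. This degeneration is harmless precisely because the equation is solved for $\psi_y$, so neither the Cauchy--Kovalevskaya argument nor the leafwise integration is affected by it.
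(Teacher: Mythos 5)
Your proof is correct, and it takes a genuinely different route from the paper's. The paper never reduces the problem to a PDE: it observes that a conjugacy preserving the fibration $\pi:(u,y)\mapsto u$ is forced, off $\{u=0\}$, to be a composition of two leafwise transports (from the fibre to the transversal $\{y=0\}$ along the leaves of $\omega_2$, using its explicit first integral, then back to the fibre along the leaves of $\omega_1$), and the entire substance of its proof is the pair of estimates $m|y|\le |\phi(\psi(u,y),u)|\le M|y|$, which allow this map and its inverse, a priori holomorphic only off $\{u=0\}$, to be extended across $\{u=0\}$ by Riemann's extension theorem. You instead posit the fibred ansatz $(u,y)\mapsto (u,\psi(u,y))$ from the start, reduce conjugacy to the linear equation $\psi_y - fu^n\psi_u = f$, and solve a Cauchy problem with data on $\{y=0\}$. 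The crucial difference is the choice of curve carrying the data: $\{y=0\}$ is transverse to the characteristics (the leaves of $\mathcal{F}_1$) at every point, including where it crosses $\{u=0\}$, so Cauchy--Kovalevskaya, or your flow-box integration along the leaves, gives holomorphy at the origin outright, with no estimates and no removable-singularity step; by contrast, the fibres $\{u=\mathrm{const}\}$ between which the paper transports become tangent to both foliations along $\{u=0\}$, and that degeneration is exactly what forces the paper into its boundedness analysis. So your argument is shorter and isolates cleanly why the singular locus is harmless. What the paper's construction buys in exchange is the explicitly geometric description of the conjugacy as the unique fibre-preserving map fixing $\{y=0\}$, the same transport-and-extend mechanism that is reused later (e.g.\ in the proof of Theorem \ref{t:invariantsinDR}); but nothing is lost on this count either, since uniqueness for your non-characteristic Cauchy problem gives the same uniqueness of the conjugacy, and the two constructions produce mutually inverse germs (you conjugate $\mathcal{F}_1$ to $\mathcal{F}_2$, the paper $\mathcal{F}_2$ to $\mathcal{F}_1$).
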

\begin{proof}
Since the two forms are smooth and locally transverse except along
$u=0$ and since we require the conjugacy to preserve each leaf of the fibration
$\pi:\left(u,y\right)\mapsto u$, it is uniquely determined
on a neighborhood of $\left(u,y\right)=\left(0,0\right)$ deprived
of $u=0.$ Thus, it is enough to show that this conjugacy and its inverse are bounded
near $\left(0,0\right)$ and apply Riemann's extension Theorem to
conclude that the extension is a biholomorphism.

\noindent Let us describe the conjugacy on a neighborhood of $\left(u,y\right)=\left(0,0\right)$
deprived of $u=0.$ To do so, we will interpret its restriction to a fibre as a composition of two holonomy maps: the first going from a fibre to $u=0$ via $\omega_1$ and the second from $u=0$ to the fibre via $\omega_2$.  To get bounds, we consider a point $\left(\alpha,0\right)$
and follow the leaf of $\omega_{1}$ until one reaches the fiber $\pi^{-1}\left(u\right).$
Denote by $\left(u,\phi\left(\alpha,u\right)\right)$ the reached
point. To compute $\phi$, we consider the Cauchy system defined by
\[
\left\{ \begin{array}{c}
y\left(0\right)=0\\
y^{\prime}\left(t\right)u\left(t,\alpha\right)^{n}f\left(u\left(t,\alpha\right),y\left(t\right)\right)+u^{\prime}\left(t,\alpha\right)=0
\end{array}\right.\mbox{where }u\left(t,\alpha\right)=\left(1-t\right)\alpha+tu.
\]
Obviously, one has $y\left(1\right)=\phi\left(\alpha,u\right).$ Now
since $f$ is a local unit, we can write
\[
\left|y^{\prime}\left(t\right)\right|=\frac{\left|\alpha-u\right|}{\left|u\left(t,\alpha\right)^{n}f\left(u\left(t,\alpha\right),y\left(t\right)\right)\right|}\leq C\frac{\left|\alpha-u\right|}{\left|u\left(t,\alpha\right)\right|^{n}},
\]
for some constants $C$ and $c$. Thus, integrating, we obtain
\begin{equation}\label{eq:ineqholonomy}
\left|\phi\left(\alpha,u\right)\right| \leq C\left|\alpha-u\right|\int_{0}^{1}\frac{\dd t}{\left|u\left(t,\alpha\right)\right|^{n}}.
\end{equation}
Now, in the same way, for any point $\left(u,y\right)$ near $\left(0,0\right),$
we denote $\psi\left(u,y\right)$ the function such that the leaf
of $\omega_{2}$ reached the point $\left(\psi\left(u,y\right),0\right).$
Using that $\omega_{2}$ admits a first integral, we obtain
that
\[
\psi\left(u,y\right)=\frac{u}{\left(1-\left(n-1\right)yu^{n-1}\right)^{\frac{1}{n-1}}}\text{ if } n\geq 2 \text{ and }\psi(u,y)=ue^{y}\text{ if } n=1.
\]

The second factor of the conjugacy between the foliations induced by $\omega_{1}$ and $\omega_{2}$
is written $\phi\left(\psi\left(u,y\right),u\right)$. Using upper bounds on the expressions $|\frac{\psi}{u}-1|$ and $|\frac{u(t,\psi)}{u}|$ substituted in (\ref{eq:ineqholonomy}), we obtain a constant $M>0$ such that $$\left|\phi\left(\psi\left(u,y\right),u\right)\right|\leq M|y|$$ for all $(u,y)$ in some neighbourhood of $(0,0)$ deprived of $u= 0$. Applying the same argument to the reverse situation, following first the leaves of $\omega_1$ then these of $\omega_2$ yields a lower bound
$$m|y|\leq\left|\phi\left(\psi\left(u,y\right),u\right)\right|.$$
The combination of these two inequalities proves the lemma.

\end{proof}

\noindent Notice that if $n\neq m$ then
\[
\dd u+u^{n}\dd y\qquad \dd u+u^{m}\dd y
\]
cannot be conjugated by a conjugacy which preserves the fibration $(u,y)\mapsto u$.

Taking two germs of regular transverse foliations $(\mathcal{R}_1,\mathcal{R}_2)$ at a point $p$, if we consider the blow-up at $p$ and denote by $E_1$ the exceptional divisor and by $\mathcal{R}^1_i$ the saturated foliation around $E_1$ obtained from $\mathcal{R}_i$, then we create a locus of tangency
\[\text{Tang}(\mathcal{R}^1_1,\mathcal{R}^1_2)= E_1\]
Blowing-up again a point in $E_1$ that is regular for both foliations, we obtain a second divisor $E_2$, two foliations $\mathcal{R}^2_1,\ \mathcal{R}^2_2$ satisfying $\text{Tang}(\mathcal{R}^2_1,\mathcal{R}^2_2)= E_1+2E_2$. Inductively, we can produce a pair of foliations $(\mathcal{R}^n_1,\mathcal{R}^n_2)$ in a neighbourhood of a chain of $n$ rational curves $E_1,\ldots, E_n$ satisfying $$\text{Tang}(\mathcal{R}^n_1,\mathcal{R}^n_2)= E_1+2E_2+\ldots+nE_n.$$
By construction, around any regular point $p\in E_n$ of the $\mathcal{R}^n_i$'s we can find coordinates $(u,y)$ where $p=(0,0)$, $E_n=\{u=0\}$, $\mathcal{R}^n_1$ is given by $\dd u=0$ and $\mathcal{R}^n_2$ by $\dd u+u^ng(u,v)\dd y =0$ for some unit $g$.

Coming back to our initial pair of foliations $(\mathcal{F},\mathcal{G})$, thanks to Lemma \ref{ptitlemme}, we can glue the pair of foliations around each common separatrix $L_{p_i}$ to the pair of foliations around a chain of rational curves of length $n_i$ coming from blowing up $n_i$ times a pair of regular transverse foliations at a point $p$. We thus obtain a pair of foliations around a divisor with $1+n_1+\ldots+n_k$ rational curves. The original foliation $\mathcal{F}$ is analytically equivalent to the restriction of this foliation to the neighbourhood of the initial divisor $E$. The divisors that have been added can be now contracted. Since, at each step, we contract a component that cuts the original divisor $E$, we get at the end of the contraction a rational curve $C$ embedded with self-intersection \[-1+(n_1+\ldots n_k)=-1+(N+3)=N+2\] in a complex surface $S$. In its neighbourhood, we get two regular foliations $(\mathcal{F}_S,\mathcal{G}_S)$ that are transverse at all points of $E$. By construction $\mathcal{G}_S$ is also transverse to $E$ at all points. On the other hand, $\mathcal{F}_S$ has tangency divisor $T(\mathcal{F}_S)=T(\mathcal{F})$ and since the contractions and blow ups are done outside $|T(\mathcal{F})|$ the holonomy is preserved \[H(\mathcal{F}_S)=H(\mathcal{F}).\]This proves the first part of the claim in Theorem \ref{t:construction}: $\mathcal{F}$ is analytically equivalent to a germ obtained from $\mathcal{F}_S$ by a sequence of blow-ups, a restriction and a contraction. As for uniqueness of this model $(\mathcal{F}_S,\mathcal{G}_S)$, we have the following

\begin{lem}
   If $(\mathcal{F},\mathcal{G})$ is a pair of germs of regular foliations around a smooth rational curve $C$ embedded in a complex surface with  $C\cdot C=N+2> 2$, the degree of $T(\mathcal{F})$ is $N$ and $\mathcal{G}$ is transverse to $C$, then the foliations $\mathcal{F}$ and $\mathcal{G}$ are transverse around $C$.

Given another pair $(\mathcal{F}',\mathcal{G}')$ with the same properties around a rational curve $C'$ in a complex surface, there exists a biholomorphism between two neighbourhoods sending the pair $(\mathcal{F},\mathcal{G})$ to $(\mathcal{F}',\mathcal{G}')$ if and only if $H[\mathcal{F}]=H[\mathcal{F}']$.
\end{lem}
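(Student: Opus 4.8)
The statement splits into the transversality of the pair and the classification by holonomy, and I would treat these separately.

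\emph{Transversality.} The plan is a Chern-class count on the germ of neighbourhood, where $N_{\F}$, $N_{\G}$ and $K_S$ restrict to genuine line bundles on $C\cong\mathbb{P}^1$. For a curve not invariant by a regular foliation, restricting the twisted defining $1$-form to $C$ and using $\deg K_C=-2$ gives $\text{tang}(\G,C)=N_{\G}\cdot C-2$ and $\text{tang}(\F,C)=N_{\F}\cdot C-2$. Since $\G$ is transverse to $C$ this yields $N_{\G}\cdot C=2$, and since $\deg T(\F)=N$ it yields $N_{\F}\cdot C=N+2$; by adjunction $K_S\cdot C=-2-C\cdot C=-N-4$. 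The divisor $\text{Tang}(\F,\G)$ is the zero divisor of $\omega_{\F}\wedge\omega_{\G}$, a section of $K_S\otimes N_{\F}\otimes N_{\G}$, so
\[\text{Tang}(\F,\G)\cdot C=(K_S+N_{\F}+N_{\G})\cdot C=(-N-4)+(N+2)+2=0.\]
Because $N=C\cdot C-2>0$, the foliation $\F$ is tangent to $C$ at $N\geq 1$ points, and at each of them $\F$ is transverse to $\G$ (as $\G$ is transverse to $C$); hence $C$ is not a component of $\text{Tang}(\F,\G)$. An effective divisor with vanishing intersection number that does not contain $C$ must be disjoint from $C$, so $\text{Tang}(\F,\G)\cap C=\emptyset$ and, after shrinking the neighbourhood, $\F$ and $\G$ are everywhere transverse.

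\emph{Classification, necessity.} If $\Phi$ conjugates $(\F,\G)$ to $(\F',\G')$ it carries $C$ to a rational curve with the same numerical and transversality data; since the holonomy class does not depend on the choice of such a transverse curve (a fact that itself falls out of the model described below), the restriction $\Phi|_C$ realises an automorphism with $\phi_*H(\F)=H(\F')$, i.e. $H[\F]=H[\F']$.

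\emph{Classification, sufficiency --- the main point.} Having the transverse pair, I would describe the germ by a suspension. The foliation $\G$ defines a retraction $\Pi_{\G}\colon U\to C$ along its leaves, well defined near $C$ since $\G$ is transverse to $C$, making $U$ a germ of fibration over $C$ with section $C$ and with the $\G$-leaves as fibres. The foliation $\F$, being transverse to these fibres, is a singular flat connection whose monodromy around each $p_i\in|T(\F)|$, computed using $C$ itself as transversal, is exactly the rotation-type germ $H(\F,p_i)\in\textup{Diff}(C,p_i)$. Thus $(U,\F,\G,C)$ is the suspension over $\mathbb{P}^1\setminus|T(\F)|$ of the representation $\rho$ sending the loop around $p_i$ to $H(\F,p_i)$, completed over each $p_i$ by a local model that is itself pinned down by $H(\F,p_i)$: in coordinates with $\G=\{\dd x=0\}$, $\F=\{\dd y=0\}$ and $C=\{y=\psi(x)^{r_i+1}\}$ one recovers $H(\F,p_i)=\psi^{-1}\circ\theta_{r_i}\circ\psi$. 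Given $H[\F]=H[\F']$, choose $\phi\in\textup{Aut}(\mathbb{P}^1)$ with $\phi_*H(\F)=H(\F')$. The decisive feature of the dicritical situation is that the transversal is the \emph{global} curve $C$, so the single automorphism $\phi$ provides simultaneously the base identification $p_i\mapsto p_i'$ and the fibrewise conjugacy carrying every generator $H(\F,p_i)$ to $H(\F',p_i')$; conjugate suspensions are isomorphic, and filling in the matching local models produces the desired $\Phi$ with $\Phi(C)=C'$.

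\emph{Main obstacle.} The hard part is making this last paragraph rigorous: realising the abstract suspension as a genuine holomorphic germ of neighbourhood, gluing the local conjugacies coherently across the non-simply-connected base $C\setminus|T(\F)|$ (a non-abelian $H^1$/realisation problem in which the relation $\prod_i\gamma_i=1$ must be respected --- automatic here because both sides arise from actual foliations), and extending the conjugacy holomorphically across the tangency points and across $C$ itself by Riemann's extension theorem. This is precisely the step where the hypothesis $H[\F]=H[\F']$ is indispensable, and where the identification of the holonomy transversal with the global curve $C$ makes a global automorphism of $C$ do the work of a simultaneous fibrewise conjugacy.
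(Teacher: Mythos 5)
Your transversality argument is correct and is essentially the paper's: the tangency divisor of the pair is the zero divisor of a section of $K_S\otimes N_{\F}\otimes N_{\G}$, which is the same line bundle the paper writes as $T^{*}_{\F}\otimes N_{\G}$, and the degree count on $C$ gives $0$. Your remark that $C$ cannot be a component of $\textup{Tang}(\F,\G)$ tidies up a point the paper leaves implicit (in fact, since $C\cdot C=N+2>0$, positivity of intersection already forbids $C$ from being a component). The ``only if'' direction of the classification is also unproblematic, although your justification that the holonomy class is independent of the choice of curve leans on the model criticised below.

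The ``if'' direction has a genuine gap, on two counts. First, the structural description your strategy rests on is incorrect: relative to the fibration $\Pi_{\G}$, the foliation $\F$ is \emph{not} a suspension with monodromy $H(\F,p_i)$ around $p_i$. In your own local model $\F=\{\dd y=0\}$, $\G=\{\dd x=0\}$, $C=\{y=\psi(x)^{r_i+1}\}$, every leaf $\{y=y_0\}$ of $\F$ is a \emph{section} of $\Pi_{\G}$, so lifting a loop around $p_i$ along the leaves of $\F$ returns every point to itself: the monodromy of that flat connection is trivial. The germ $H(\F,p_i)$ is not a monodromy of the fibration at all; it records how a single leaf of $\F$ meets the curve $C$ in several points (the orbit $\{x:\psi(x)^{r_i+1}=y_0\}$), i.e.\ it is an invariant of the embedding of $C$ relative to $\F$, not of $\F$ relative to $\Pi_{\G}$. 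Consequently there is no representation of $\pi_1(C\setminus|T(\F)|)$ to suspend, no relation $\prod_i\gamma_i=1$ to respect, and no non-abelian $H^1$ realisation problem to solve.

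Second, and this is the substantive gap: you explicitly defer the construction of the conjugacy (your ``main obstacle'') without carrying it out, yet that construction is the whole content of the statement. The paper's proof shows that no cohomological machinery is needed, precisely because the transverse pair rigidifies everything. The conjugacy $\phi$ of holonomies determines which leaf of $\F$ goes to which leaf of $\F'$ (leaves near $C$ correspond to orbits of the holonomy pseudogroup on $C$, and $\phi$ conjugates the pseudogroups) and which leaf of $\G$ goes to which leaf of $\G'$ ($\G$-leaves correspond to points of $C$, since $\G$ is transverse to $C$). One then \emph{defines} $\Phi(p)$ as the intersection point of the $\F'$-leaf and the $\G'$-leaf associated to the $\F$- and $\G$-leaves through $p$. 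Transversality of both pairs makes this well defined and holomorphic away from the fibres over the $p_i$; the uniqueness of any map over $\phi$ sending leaves to leaves makes all local determinations agree automatically, so there is no gluing obstruction; and the extension across the fibres over the $p_i$ is handled by the local normalisation of Lemma \ref{ptitlemme} together with Riemann's extension theorem. Your outline would have to be replaced, in effect, by this argument.
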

\begin{proof}
We choose an open covering $\left\{U_i\right\}$ of $C$ in the surface, holomorphic vector fields $v_i$ on $U_i$ generating $\mathcal{F}$ and holomorphic one forms $\omega_i$ generating $\mathcal{G}$ in the neighborhood of $C$. On the intersection $U_i\cap U_j$, we have
\begin{eqnarray*}
v_i=\phi_{ij} v_j \\
\omega_i=\psi_{ij} \omega_j
\end{eqnarray*}
where $\phi_{ij}$ and $\psi_{ij}$ are cocycles representing respectively the line bundles $T^{*}_{\mathcal{F}}$ and $N_{\mathcal{G}}$. Therefore, the contraction $\omega_i(v_i)$ is a section of $\left[T^{*}_{\mathcal{F}}\otimes N_{\mathcal{G}}\right]_C$ since
\[\omega_i(v_i)=\psi_{ij}\phi_{ij} \omega_j(v_j). \]
 Now, this section vanishes along $C$ at the point where, precisely, $\mathcal{F}$ and $\mathcal{G}$ are tangent, thus
\[\textup{Tang}(\mathcal{F},\mathcal{G})=\textup{deg}\left[T^{*}_{\mathcal{F}}\otimes N_{\mathcal{G}}\right]_C=-T_{\mathcal{F}}\cdot C+ N_{\mathcal{G}}\cdot C \]
Using the formula of Brunella \cite{Br} yields to
\begin{eqnarray*}
\textup{Tang}(\mathcal{F},\mathcal{G})&=&-C\cdot C+\textup{Tang}(\mathcal{F},C)+\mathcal{X}(C)+\textup{Tang}(\mathcal{G},C) \\
&=& -(n+2)+n+2+0=0.
\end{eqnarray*}

The sufficiency part of the second statement of the lemma is obvious. For the necessity,
   an equivalence $\phi:C\rightarrow C'$ between the holonomies of $\mathcal{F}$ and $\mathcal{F}'$ tells us which leaf of $\mathcal{F}$ goes to which of $\mathcal{F}'$. On the other hand we can use the same equivalence to tell which leaf of $\mathcal{G}$ goes to which of $\mathcal{G}'$, since these foliations do not impose compatibility conditions by transversality at all points of the rational curves. By transversality of the pairs of foliations, this equivalence can be extended to the neighbourhoods uniquely if we impose that the pair $(\mathcal{F},\mathcal{G})$ is mapped to the pair $(\mathcal{F}',\mathcal{G}')$.
\end{proof}

From the previous arguments, we prove Corollary \ref{c:birational tranfs} by finding a biholomorphism between the models we have just constructed. The resulting composition of biholomorphisms, contractions and blow-ups might have indeterminacies. They lie on an invariant set that contains the separatrices over points that are blown-up to obtain one of the foliations, but not blown-up to obtain the other. When all those points coincide, the birational map does not have indeterminacies and it extends to a biholomorphism. This is the main idea behind Theorem \ref{t:invariantsinDR} that we proceed to prove.


\begin{pfof}{Theorem \ref{t:invariantsinDR}} Suppose first that $\mathcal{F}_1\in\DR(n)$ and $\mathcal{F}_2$ are equivalent via $\varphi\in\text{Aut}(\mathbb{C}^2,0)$. By blowing up source and target of $\varphi$ once, we get that the lift of $\varphi$ extends to $E$ as a biholomorphism in a neighbourhood of $E$ that sends the saturation of the pull-back foliation $\widetilde{\mathcal{F}_1}$ to the pull back foliation $\widetilde{\mathcal{F}_2}$. In particular, $\mathcal{F}_2$ has no singular points. Denote by $\phi\in\text{Aut}(E)$ the restriction of this biholomorphism to $E$. By construction, the holonomies satisfy \[\phi\circ H(\mathcal{F}_1)\circ \phi^{-1}=H(\mathcal{F}_2).\] Moreover, if $D\in\text{div}(\mathcal{F}_1)$, then there exists a radial foliation $\mathcal{G}_1$ such that $\text{Tang}(\mathcal{F}_1,\mathcal{G}_1)$ is invariant by $\mathcal{F}_1$ and $D=\text{Tang}(\mathcal{F}_1,\mathcal{G}_1)_{|E}$. By applying $\varphi$ on both sides, we get $\text{Tang}(\varphi_*(\mathcal{F}_1),\varphi_*(\mathcal{G}_1))$ invariant by $\mathcal{F}_2=\varphi_*(\mathcal{F}_1)$ and $\phi_*(D)=\text{Tang}(\varphi_*(\mathcal{F}_1),\varphi_*(\mathcal{G}_1))_{|E}$ and since $\varphi_*(\mathcal{G}_1)$ is a radial foliation, $\phi_*(D)\in\text{div}(\mathcal{F}_2)$.

For the sufficiency, suppose $\phi$ as in the statement of the theorem exists. Let $\mathcal{G}_1$, $\mathcal{G}_2$ be radial foliations such that $S_i=\text{Tang}(\mathcal{F}_i,\mathcal{G}_i)$ is invariant by $\mathcal{F}_i$ and \begin{equation} D=\text{Tang}(\mathcal{F}_1,\mathcal{G}_1)_{|E}\text{ and } \phi_*(D)=\text{Tang}(\mathcal{F}_2,\mathcal{G}_2)_{|E}  .\end{equation} The extension $\varphi$ of $\phi$ to the neighborhood is uniquely defined and holomorphic if we impose  $$\varphi_*(\mathcal{F}_1)=\mathcal{F}_2\text{ and }\varphi_*(\mathcal{G}_1)=\mathcal{G}_2.$$ Indeed, $\phi$ indicates which leaf of $\mathcal{F}_1$ goes to which of $\mathcal{F}_2$ and which leaf of $\mathcal{G}_1$ goes to which of $\mathcal{G}_2$. Since outside $S_i$ the leaves of $\mathcal{F}_i$ and those of $\mathcal{G}_i$ intersect transversally, the extension of $\varphi$ to the complement of $S_1$ is well defined, holomorphic, with image in the complement of $S_2$. It can then be extended holomorphically to $S_1$ as a biholomorphism of a neighborhood of the exceptional divisor by using Lemma \ref{ptitlemme} to both pairs $(\mathcal{F}_i,\mathcal{G}_i)$ around $S_i$. Thus after contracting the exceptional divisor we get a biholomorphism in a neighbourhood of $0\in\mathbb{C}^2$.

\end{pfof}
Remark that Theorem \ref{t:invariantsinDR} is also true for $\mathcal{F}_1=\mathcal{F}_2$ so that it also gives the structure of the group of automorphisms of $\mathcal{F}\in\DR$.

\subsection{The spaces $\DR(n)$.}\label{DRN}
In this section, we will analyze the sets $\divR (\mathcal{F})$ for homogeneous foliations $\mathcal{F}$ of small multiplicity at the origin.  The analysis will lead us to a complete description of the moduli spaces of homogeneous dicritical foliations, thanks to Theorem \ref{t:invariantsinDR}.

Remark that when $\mathcal{F}\in\mathcal{D}$, $\divR(\mathcal{F})\subset\text{div}(\mathcal{F})$  and it is therefore a (possibly empty) subset of a four dimensional affine space. A finer statement to that of Theorem \ref{t:small multiplicity} is:

\begin{thm}\label{t:modulispacesDR}

The set $\DR(n)$ is equal to $\mathcal{D}(n)$ if and only if $n\leq 3$. Moreover, for any $\mathcal{F}\in \mathcal{D}(n)$, we have

\begin{enumerate}
  \item $n=1$, then $\divR (\mathcal{F})=\textup{div}(E\setminus|T(\mathcal{F})|)(4)$.
  \item $n=2$ then $\divR (\mathcal{F})=\textup{div}(\mathcal{F})$. Let $q(\mathcal{F})\subset\divR(\mathcal{F})$ be the set of divisors with a single point in its support.
  \begin{itemize}
  \item if $T\left(\F\right)=p_1+p_2$ , then $|q(\mathcal{F})|=5$

  \item if $T\left(\F\right)=2p_1$, then $|q(\mathcal{F})|=1$.

  \end{itemize}
  \item $n=3$ then $\divR (\mathcal{F})$ is a quadric in $\text{div}(\mathcal{F})$. The set $q(\mathcal{F})$ of divisors in $\divR(\mathcal{F})$ with a point of order at least $4$ in its support is non-empty and contains generically at most 24 elements.
  \begin{itemize}
  \item if $T\left(\F\right)=p_1+p_2+p_3$, then generically $|q(\mathcal{F})|=24$;
  \item if $T\left(\mathcal{F}\right)=2p_1+p_2$ then generically $|q(\mathcal{F})|=18$
  \item if $T\left(\F\right)=3p_1$, then generically $|q(\mathcal{F})|=6$
  \end{itemize}

  \item $n=4$, the $1-$form \[y^4(x\dd y-y\dd x)+x^6\dd x+y^7\dd y\]  belongs to $\mathcal{D}(4)$ but not to $\DR(4)$.
  \item for any $n\geq 5$ the $1-$form of $\mathcal{D}(5)$
  \[y^n(x\dd y-y\dd x)+x^{n+2}\dd x+y^2x^{n+1}\dd y\] does not belong to $\DR(5)$.
\end{enumerate}
  \end{thm}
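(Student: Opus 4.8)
The plan is to reduce everything to a study of the four dimensional affine space $\textup{div}(\mathcal{F})$ and of the algebraic conditions that single out \emph{invariant} tangencies inside it. First I would make $\textup{div}(\mathcal{F})$ explicit. Writing a representative $\omega=A\,\dd x+B\,\dd y$ of $\mathcal{F}\in\mathcal{D}(n)$, dicriticity forces the $(n+1)$-jet to be $\omega_{n+1}=W_n(x\,\dd y-y\,\dd x)$ for a homogeneous $W_n$ of degree $n$ (this is exactly the vanishing $xA_{n+1}+yB_{n+1}\equiv 0$ that makes $E$ non-invariant). For a radial field $v=x\partial_x+y\partial_y+v_2+v_3+\cdots$, the degree $n+2$ part of $\omega(v)$ then vanishes and its degree $n+3$ part equals $xA_{n+2}+yB_{n+2}+W_n(xb_2-ya_2)$, where $v_2=a_2\partial_x+b_2\partial_y$ and $A_{n+2},B_{n+2}$ are the degree $n+2$ parts of $A,B$. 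Since $(a_2,b_2)\mapsto xb_2-ya_2$ is onto the cubics, this recovers $\textup{div}(\mathcal{F})=\bigl[\,xA_{n+2}+yB_{n+2}+W_n\cdot\mathbb{C}[x,y]_3\,\bigr]$ as a four dimensional affine subspace of $\textup{div}E(n+3)$. In particular for $n=1$ the ambient projective space is itself $\mathbb{P}^4$, so $\textup{div}(\mathcal{F})$ is everything, while for $n=2$ it is a genuine four dimensional subspace of $\mathbb{P}^5$; items (1) and (2) then follow once the invariance condition below is shown to be vacuous (for $n=1$ one removes only the divisors meeting $|T(\mathcal{F})|$, leaving $\textup{div}(E\setminus|T(\mathcal{F})|)(4)$).

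The core step is to cut out $\divR(\mathcal{F})$ inside $\textup{div}(\mathcal{F})$. I would prove the criterion: $D\in\divR(\mathcal{F})$ if and only if the $\mathcal{F}$-invariant separatrix divisor $S_D$ (the union, with multiplicities, of the separatrices through the points of $D$) is analytically equivalent to its tangent cone, i.e. can be straightened to a union of lines. Indeed, an invariant tangency $\textup{Tang}(\mathcal{F},\mathcal{G})$ is tangent to $\mathcal{F}$ along each branch, hence also invariant by $\mathcal{G}$; being radial-invariant of tangent cone $D$ it must be $S_D$ straightened by a linearization of $\mathcal{G}$, and conversely any straightening of $S_D$ produces the required radial $\mathcal{G}$ through Lemma \ref{ptitlemme}. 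This converts the problem into a finite jet computation (everything is governed by the $(2n+1)$-jet): writing the invariance equation $\omega\wedge\dd f=f\,\eta$ with $f=\omega(v)$ and grading by degree, the leading order is automatically solvable because $\omega_{n+1}\wedge\dd f_0=-(n+3)W_n f_0\,\dd x\wedge\dd y$ by Euler's identity; the obstructions to solving the successive orders, expressed modulo the leading form $f_0$, are the equations defining $\divR(\mathcal{F})$ inside $\textup{div}(\mathcal{F})$.

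The case analysis follows by counting these obstructions against the freedom in the higher jets $v_k$. For $n\le 2$ the ideal generated by $f_0$ is large enough in the relevant degree that every order can be solved with no condition on $f_0$, giving $\divR(\mathcal{F})=\textup{div}(E\setminus|T(\mathcal{F})|)(4)$ for $n=1$ and $\divR(\mathcal{F})=\textup{div}(\mathcal{F})$ for $n=2$. For $n=3$ a single quadratic solvability obstruction survives (the higher orders being redundant once it holds), so $\divR(\mathcal{F})$ is a quadric in $\textup{div}(\mathcal{F})$. For $n\ge 4$ the system is overdetermined; to prove $\DR(n)\subsetneq\mathcal{D}(n)$ I would specialize to the explicit one-forms of items (4) and (5), for which $W_n=y^n$, compute $f_0$ and the first solvability obstruction, and check it is a nonzero function of the four parameters of $\textup{div}(\mathcal{F})$ with no common zero, so that $\divR(\mathcal{F})=\emptyset$; this simultaneously shows these germs lie in $\mathcal{D}(n)\setminus\DR(n)$.

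It remains to produce the numbers in items (2) and (3), which I would obtain by intersection theory on $\textup{div}(\mathcal{F})$. The divisors supported at a single point form the rational normal curve of degree $n+3$ in $\textup{div}E(n+3)=\mathbb{P}^{n+3}$, and those with a point of multiplicity at least four form a determinantal subvariety; the sets $q(\mathcal{F})$ are the intersections of these loci with the four dimensional $\textup{div}(\mathcal{F})$ (for $n=2$) or with the quadric $\divR(\mathcal{F})$ (for $n=3$), always taken away from $|T(\mathcal{F})|$. A Bezout count gives $5$ in the generic case $n=2$ (a degree $5$ curve meeting a hyperplane $\mathbb{P}^4$) and $24$ in the generic case $n=3$; the degenerations of $T(\mathcal{F})$ (to $2p_1$, resp. to $2p_1+p_2$ and $3p_1$) move $\textup{div}(\mathcal{F})$ into special position with respect to these loci, which I would analyze case by case to obtain the drops to $1$, and to $18$ and $6$. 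The main obstacle throughout is the higher-order invariance analysis: pinning down the exact number and degree of the solvability obstructions uniformly in $n$, proving their redundancy for $n=3$, and establishing global (not merely generic) inconsistency for \emph{all} parameter values in the $n\ge 4$ examples.
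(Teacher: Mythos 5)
Your plan follows the same architecture as the paper's proof: characterize membership in $\DR(n)$ by an invariance equation, grade it by homogeneous degree, show the resulting obstructions are vacuous for $n\le 2$, cut out a quadric for $n=3$, and are inconsistent for explicit examples when $n\ge 4$. (The paper phrases the invariance as $X\cdot\omega(X)=u\,\omega(X)$ for a vector field $X$ with radial linear part and a unit $u$, rather than your $\omega\wedge\dd f=f\eta$; the two are equivalent.) But two genuine gaps remain. First, a jet-by-jet analysis only ever produces a \emph{formal} solution --- a formal radial vector field, or a formal straightening of $S_D$ --- whereas $\divR(\F)$ is defined through convergent radial foliations $\G\in\D(0)$. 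The paper closes this with Artin's approximation theorem (a formal solution of $\omega(\hat X)=\hat v F$, with $F$ a convergent equation of separatrices, can be replaced by a convergent one near any jet); your proposal never addresses the formal-to-convergent passage, and the finite determinacy you assert (``everything is governed by the $(2n+1)$-jet'') is exactly the paper's Lemma \ref{finitedeter}, an induction that works only from degree $2n+2$ on, resting on Bezout's identity in $\mathbb{C}[t]$ for the coprime pair $R_n$ and $xP_{n+2}+yQ_{n+2}$; you defer precisely this step. Second, and concretely, your treatment of item (4) would fail: for the form $y^4(x\dd y-y\dd x)+x^6\dd x+y^7\dd y$ the \emph{first} solvability obstruction (in degree $n+4=8$) is solvable. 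The monomial argument that kills $n\ge 5$ --- no term $y^3x^{n+1}$ can occur on the right-hand side because the terms carrying $B_2$ are divisible by $y^{n-1}$ with $n-1>3$ --- breaks exactly at $n=4$, where $y^{n-1}=y^3$; this is why the paper must push to degree $n+5=9$ and resort to a long machine verification there. So ``compute the first obstruction and check it has no common zero'' decides $n\ge 5$ but cannot decide $n=4$.

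On the counts in items (2)--(3), your intersection-theoretic route is genuinely different from the paper's and gives the upper bounds cleanly: the locus of degree-$6$ divisors with a point of multiplicity $\ge 4$ is the image of $\mathbb{P}^1\times\mathbb{P}^2$ under $(\ell,q)\mapsto\ell^4q$, hence has degree $\int(4h_1+h_2)^3=12$, and meeting the quadric $\divR(\F)$ (degree $2$ inside its $\mathbb{P}^4$) gives at most $24$ points; likewise the rational normal curve $\{5p\}$ of degree $5$ meets the hyperplane $\overline{\textup{div}(\F)}\subset\mathbb{P}^5$ in $5$ points. The paper instead eliminates by hand, solving for the $\alpha_i$ as degree-zero rational functions of $t$ and substituting into the quadric to get a polynomial of degree at most $24$. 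What Bezout alone does not give you: the genericity statements --- that the $24$ (resp.\ $5$) solutions are distinct, affine, and off $|T(\F)|$, and that $q(\F)\neq\emptyset$ --- for which the paper exhibits an explicit example with $24$ simple roots; and the degenerate counts $1$, $18$, $6$, which arise because intersection multiplicity escapes to the hyperplane at infinity of the affine space $\textup{div}(\F)$ (for $T(\F)=2p_1$ the divisor $5p_1$ lies at infinity and absorbs multiplicity $4$, leaving the unique affine solution the paper computes explicitly). You flag this case analysis as remaining work, but it is where the actual content of those items lies, so it cannot be left as a remark.
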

  \corr{
In the case of $n=1$ we get a result of Cerveau (see \cite{klu}) as a consequence of Theorems \ref{t:invariantsinDR} and \ref{t:modulispacesDR} that improves the statement of Corollary \ref{c:birational tranfs}:
\begin{cor}
  Two foliations in $\mathcal{D}(1)$ are analytically equivalent if and only if they share the same holonomy class. 
\end{cor}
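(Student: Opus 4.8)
The plan is to deduce the corollary directly from Theorem \ref{t:invariantsinDR}, combined with the explicit description of $\divR$ in the case $n=1$ furnished by Theorem \ref{t:modulispacesDR}. First I note that since $n=1\le 3$, Theorem \ref{t:modulispacesDR} gives $\mathcal{D}(1)=\DR(1)$, so both foliations are homogeneous dicritical and Theorem \ref{t:invariantsinDR} applies with either one playing the role of $\mathcal{F}_1$. The ``only if'' direction is immediate: if $\mathcal{F}_1$ and $\mathcal{F}_2$ are analytically equivalent via $\varphi\in\textup{Aut}(\mathbb{C}^2,0)$, then, as recalled before Theorem \ref{t:construction}, the induced automorphism $\phi=\varphi_{|E}\in\textup{Aut}(E)$ conjugates $H(\mathcal{F}_1)$ to $H(\mathcal{F}_2)$, whence $H[\mathcal{F}_1]=H[\mathcal{F}_2]$.

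For the converse, suppose $H[\mathcal{F}_1]=H[\mathcal{F}_2]$, so that there is $\phi\in\textup{Aut}(E)$ with $H(\mathcal{F}_2)=\phi\circ H(\mathcal{F}_1)\circ\phi^{-1}$. This is precisely the first condition in Theorem \ref{t:invariantsinDR}, so it remains only to verify the divisorial condition, namely that $\phi_*(D)\in\divR(\mathcal{F}_2)$ for some $D\in\divR(\mathcal{F}_1)$. Here the case $n=1$ is special: by item (1) of Theorem \ref{t:modulispacesDR} we have $\divR(\mathcal{F}_i)=\textup{div}(E\setminus|T(\mathcal{F}_i)|)(4)$, the \emph{entire} space of positive degree-$4$ divisors avoiding the support of $T(\mathcal{F}_i)$. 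Since $T(\mathcal{F}_i)$ has degree $1$, each $|T(\mathcal{F}_i)|$ is a single point, and the support of $H(\mathcal{F}_i)$ coincides with $|T(\mathcal{F}_i)|$; the conjugating automorphism $\phi$ therefore sends the support of $H(\mathcal{F}_1)$ to that of $H(\mathcal{F}_2)$, i.e. $\phi(|T(\mathcal{F}_1)|)=|T(\mathcal{F}_2)|$. Consequently $\phi$ restricts to a biholomorphism $E\setminus|T(\mathcal{F}_1)|\to E\setminus|T(\mathcal{F}_2)|$, and its pushforward $\phi_*$ carries $\textup{div}(E\setminus|T(\mathcal{F}_1)|)(4)$ bijectively onto $\textup{div}(E\setminus|T(\mathcal{F}_2)|)(4)$; that is, $\phi_*(\divR(\mathcal{F}_1))=\divR(\mathcal{F}_2)$. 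In particular the divisorial condition holds for every $D\in\divR(\mathcal{F}_1)$, and Theorem \ref{t:invariantsinDR} produces the desired $\varphi\in\textup{Aut}(\mathbb{C}^2,0)$ with $\varphi_*(\mathcal{F}_1)=\mathcal{F}_2$.

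The only point requiring care is the matching of support points $\phi(|T(\mathcal{F}_1)|)=|T(\mathcal{F}_2)|$, which is what makes the divisorial hypothesis of Theorem \ref{t:invariantsinDR} automatic rather than an additional constraint. This phenomenon is particular to $n=1$: it rests on $\divR$ being the \emph{full} degree-$(n+3)$ divisor space on $E\setminus|T(\mathcal{F})|$, which by Theorem \ref{t:modulispacesDR} already fails for $n=2$ (there $\divR(\mathcal{F})=\textup{div}(\mathcal{F})$ is a proper four-dimensional affine subspace) and a fortiori for larger $n$. I therefore expect no genuine obstacle beyond correctly invoking the $n=1$ clause of Theorem \ref{t:modulispacesDR} and the elementary observation that the support of the holonomy equals the support of the tangency divisor.
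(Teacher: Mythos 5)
Your proof is correct and follows exactly the route the paper intends: the corollary is stated there as a direct consequence of Theorems \ref{t:invariantsinDR} and \ref{t:modulispacesDR}, and your argument fills in precisely those details (item (1) of Theorem \ref{t:modulispacesDR} making the divisorial condition automatic once $\phi$ conjugates the holonomies, since $\phi$ must send $|T(\mathcal{F}_1)|$ to $|T(\mathcal{F}_2)|$). Nothing further is needed.
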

}

\begin{rem} \label{r:unfoldings for homogeneous} If $\mathcal{F}\in\DR (n)$ and we take a germ of transverse subvariety to $\divR(\mathcal{F})$ in $S(\mathcal{F})=\{\divR(\mathcal{F}'): \mathcal{F}'\in\DR(n)\text{ and } H(\mathcal{F}')=H(\mathcal{F})\} \subset \textup{div} E(n+3)$, we can easily construct a isoholonomic germ of deformation of $\mathcal{F}$. As we will see in section \ref{s:deformation vs unfolding} this will be enough to guarantee that the deformation underlies a germ of unfolding, and also that, for $n\leq 3$, we have coincidence of dimension of $S(\mathcal{F})$ and of $ \text{div}(E)(n+3)$. Recall that, on the other hand, the dimension $\mathcal{M}(\mathcal{F})$ of the base space of the universal equisingular unfolding of such $\mathcal{F}$ is $n(n-1)/2$ (see \cite{M}). For $n=1$, it is zero and this appears as the codimension of $\divR(\mathcal{F})$ in $\text{div}(E)(4)$ in item 1. In the cases $n=2,3$ the codimension of $\divR(\mathcal{F})$ in $\textup{div} E(n+3)$ coincides with the dimension of the base space of the universal equisingular unfolding. We can construct universal equisingular unfoldings in this way for $n=2,3$.  In the case $n=2$, we can realize any four dimensional affine subspace as some $\divR(\mathcal{F})$. For $n=3$, we need to find a point in $\divR(\mathcal{F})$ having a point in the support of order at least $3$ to realize the quadric. Some of these unfoldings can be obtained by pull-back, allowing us to obtain explicit forms in some cases treated in section \ref{s:unfoldings}.
\end{rem}

A germ of function $h$ in $\mathbb{C}^2$ is quasi-homogeneous if and only if there exists a vector field $X$ such that $X\cdot h=h$. In the same way, we obtain the following criterion for a foliation $\mathcal{F}$ to be in $\DR$. Recall that we denote by $\DR(n)$ the intersection of $\DR$ and $\mathcal{D}(n)$.

\begin{lem} Let $\omega$ be a one form  representing $\mathcal{F}\in \D(n)$.
Then $\mathcal{F}\in \DR(n)$ if and only if there exists a formal vector field $\hat{X}$ with radial linear part and a formal unit $\hat{u}$ such that
\begin{equation}\label{equationDR}
\hat X\cdot \omega(\hat X)=\hat u\omega(\hat X).
\end{equation}

\end{lem}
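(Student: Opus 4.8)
The plan is to read the equation (\ref{equationDR}) geometrically. Writing $g:=\omega(\hat X)$, the zero set $\{g=0\}$ is exactly the tangency $\textup{Tang}(\F,\hat\G)$ between $\F$ and the (formal) foliation $\hat\G=\{\hat X\}$, and the identity $\hat X\cdot g=\hat u\, g$ says precisely that $\hat X$ is tangent to $\{g=0\}$ along that curve, i.e. that $\textup{Tang}(\F,\hat\G)$ is invariant by $\hat\G$. I would first record the elementary remark that the tangency set of two foliations is invariant by one of them if and only if it is invariant by the other: at a smooth point $p$ of the tangency curve the two foliations are regular and share their tangent direction, so if the curve is tangent to $\hat\G$ there it is tangent to $\F$ as well, and conversely (arguing branch by branch on the punctured neighbourhood, where both foliations are regular). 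Thus (\ref{equationDR}) is equivalent to: the formal curve $\textup{Tang}(\F,\hat\G)$ is invariant by $\F$. This reduces the lemma to producing, from a formal $\hat X$, a \emph{convergent} radial $\G$ with invariant tangency, and conversely.

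For the direct implication, if $\F\in\DR(n)$ then by definition there is a convergent vector field $X$ with radial linear part whose foliation $\G$ has $\textup{Tang}(\F,\G)$ invariant; invariance gives $X\cdot g=u\,g$ with $u$ holomorphic (here $g:=\omega(X)$), by the product rule applied to the invariant components. It remains to see that $u$ is a unit. Since $\F$ is dicritical its degree $n+1$ part is a multiple of $x\dd y-y\dd x$, so the radial contraction kills it and $g$ has order exactly $n+3$ (the degree of $\textup{Tang}(\F,\G)_{|E}$); applying the Euler field $X_0=x\partial_x+y\partial_y$, the degree $n+3$ part of $X\cdot g$ equals $(n+3)$ times that of $g$, forcing $u(0)=n+3\neq0$. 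Taking $\hat X=X,\ \hat u=u$ finishes this direction.

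For the converse I would pass to the blow-up. A radial linear part makes the strict transform $\widetilde{\hat X}$ transverse to $E$, while $\widetilde\F$ is regular (as $\F\in\D$); hence the branches of the formal tangency curve lift to formal invariant curves of the \emph{regular} foliation $\widetilde\F$, which are therefore genuine leaves and in particular convergent. They meet $E$ at points $p_i\notin|T(\F)|$ (a foliation transverse to $E$ cannot be tangent to $\widetilde\F$ where $\widetilde\F$ is tangent to $E$) with multiplicities $m_i$, $\sum m_i=n+3$. So $C:=\textup{Tang}(\F,\hat\G)$ is a convergent $\F$-invariant divisor. To conclude $\F\in\DR(n)$ I must manufacture a convergent radial $\G$ with $\textup{Tang}(\F,\G)=C$: near each $p_i$ I would impose the normalised pair of Lemma \ref{ptitlemme}, choosing coordinates $(u,y)$ with $\widetilde\F=\{\dd u=0\}$, $\widetilde C_i=\{u=0\}$ and declaring $\widetilde\G=\{\dd u+u^{m_i}\dd y=0\}$, so that $\widetilde\G$ is transverse to $E$, has $\widetilde C_i$ as a leaf, and is tangent to $\widetilde\F$ exactly to order $m_i$ along $\widetilde C_i$; away from the $p_i$ I take $\widetilde\G$ transverse to both $E$ and $\widetilde\F$. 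Gluing these into a single foliation transverse to $E$ everywhere and contracting $E$ produces the desired radial $\G$, and then $\textup{Tang}(\F,\G)=C$ is invariant by the remark above, so $\F\in\DR(n)$.

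The main obstacle is exactly this last, convergent realisation: assembling the local transverse models into one global holomorphic foliation transverse to the rational curve $E\cong\mathbb{P}^1$ with the prescribed tangency. This is the same type of transversality/gluing used in the proof of Theorem \ref{t:construction} and in Lemma \ref{ptitlemme}, and it is what upgrades the formal datum to an honest radial foliation. It is worth noting that the radial linear part $x\partial_x+y\partial_y$ has eigenvalues $1,1$ and hence no resonances (it lies in the Poincar\'e domain), so $\hat X$ is formally linearisable to $X_0$; in those coordinates the computation that forced $u(0)=n+3$ shows that $g$ equals a homogeneous polynomial of degree $n+3$ times a unit, i.e. the tangency curve is genuinely a union of lines through the origin. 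This clarifies the geometry and re-explains why $\hat u$ must be a unit, but by itself it only yields a formal $\G$; the convergence is supplied by the blow-up argument above.
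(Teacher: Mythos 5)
Your ``only if'' direction is correct and is the paper's argument: invariance of the tangency locus by $\F$ gives invariance by $\G$, hence $X\cdot\omega(X)=u\,\omega(X)$ for a holomorphic $u$, and the Euler identity on the lowest homogeneous component of $\omega(X)$ (which has order exactly $n+3$) gives $u(0)=n+3\neq0$. The first half of your converse also matches the paper: since $\widetilde{\F}$ is regular, every formal branch invariant by $\F$ is a branch of an honest leaf, so $\omega(\hat X)=\hat v F$ with $\hat v$ a formal unit and $F$ a \emph{convergent} equation of $\F$-invariant curves. The gap is the step you yourself call ``the main obstacle'': manufacturing a convergent radial $\G$ with $\textup{Tang}(\F,\G)=\{F=0\}$. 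Your plan --- impose the models $\dd u+u^{m_i}\dd y$ of Lemma \ref{ptitlemme} near the points $p_i$, take $\widetilde{\G}$ transverse to $E$ and to $\widetilde{\F}$ elsewhere, and ``glue'' --- does not go through. Holomorphic foliations cannot be assembled from local prescriptions the way smooth objects can: on the overlaps the local models must be identified by biholomorphisms, and such a regluing changes the complex structure of the neighbourhood of $E$, hence in general the analytic class of $\F$ itself. By the paper's own Theorem \ref{thm:invariantsinD}, germs sharing the holonomy and tangency data of $\F$ form a $\mathbb{C}^M$-family of pairwise inequivalent classes, so a surgery of this kind can at best produce \emph{some} $\F'\in\DR(n)$ with the same transverse invariants, not the given $\F$. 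Moreover the gluings you invoke are of a different nature: Lemma \ref{ptitlemme} and the proof of Theorem \ref{t:construction} conjugate, or cap off, a pair $(\F,\G)$ that \emph{already exists}, whereas here the existence of $\G$ is precisely what is to be proven; and it is a rigid problem (the traces $\textup{Tang}(\F,\G)_{|E}$ over all radial $\G$ fill only a four-dimensional affine subspace of $\textup{div}\,E(n+3)$), so no transversality or general-position argument can supply it.

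The missing idea, and the paper's way of concluding, is Artin's approximation theorem: the analytic equation $\omega(X)=vF$ in the unknowns $(X,v)$ admits the formal solution $(\hat X,\hat v)$, hence admits a convergent solution $(X,v)$ whose low-order jets agree with the formal ones; in particular $X$ can be taken with radial linear part and $v(0)\neq0$. Then $\textup{Tang}(\F,\G_X)=\{F=0\}$ is invariant by $\F$, so $\F\in\DR(n)$. If you replace your gluing step by this application of Artin's theorem, your proof becomes complete; your blow-up justification of the convergence of $F$ is a correct, slightly more detailed rendering of the corresponding sentence in the paper.
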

\begin{proof}{}
Suppose that there exist $\mathcal{G}\in \D(0)$ such that $\mbox{Tang}(\F,\G)$ is invariant by $\mathcal{F}$. Then, it is also invariant by $\G$. Now, let us consider $X$ and $\omega$ any vector field and form that represent respectively $\G$ and $\F$. The contraction $\omega(X)$ is an equation of the tangency locus. Since it is invariant by $X$, the derivative $X\cdot\omega(X)$ can be holomorphically divided by $\omega(X)$ thus there exist a function $u$ such that
\[X\cdot \omega(X)=u\omega(X).\]Looking at the multiplicity at $0$ of both component of the equality above ensures that $u$ is a unit, i.e., $u(0)\neq 0$.
Here, $X$ and $u$ are convergent, thus, also formal. Conversely, if (\ref{equationDR}) has a formal solution, then $\omega(\hat X)$ is the product of formal equations of convergent separatricies of $\mathcal{F}$. Thus, there exists a formal unit $\hat v$ and a convergent equation $F$ of separatricies of $\F$ such that
\[\omega(\hat X)=\hat vF.\] Now, we deduce the existence of a convergent solution $(X,v)$ to the above equation by Artin's Theorem (see \cite{A}) whose first jets coincide with those of $(\hat X,\hat v)$. Obviously, the convergent vector field $X$ satisfied the equation (\ref{equationDR}) for some unit $u$.

\end{proof}

\begin{lem}\label{finitedeter} Let $\omega$ be a $1$-form representing $\mathcal{F}\in\mathcal{D}(n)$.
Suppose that there exist a vector field $X$ and a unit $u$ such that
\[j^{2n+1}\left( X\cdot \omega( X)- u\omega( X)\right)=0\]

then $\mathcal{F}\in \DR(n)$.
\end{lem}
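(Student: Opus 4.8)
The plan is to upgrade the approximate solution $(X,u)$ to an exact formal one and then quote the converse direction of the preceding criterion lemma. We may assume $X$ has radial linear part, since it is the candidate generator of a radial foliation $\mathcal{G}\in\mathcal{D}(0)$. Writing $P(X,u):=X\cdot\omega(X)-u\,\omega(X)$, the hypothesis says $P(X,u)$ has order $\ge 2n+2$, and I would look for a formal vector field $\hat X=X+Y$ (with $Y$ of high order, so $\hat X$ retains the radial linear part of $X$) and a formal unit $\hat u=u+v$ solving the \emph{exact} equation $\hat X\cdot\omega(\hat X)=\hat u\,\omega(\hat X)$; by the preceding lemma this is precisely the statement $\mathcal{F}\in\DR(n)$. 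The mechanism is a graded Newton scheme: remove the homogeneous components of the defect $P(X,u)$ one degree at a time.

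The derivative of $P$ at $(X,u)$ in the direction $(Y,v)$ is the homological operator
$$\mathcal{L}(Y,v)=\bigl(X\cdot\omega(Y)-u\,\omega(Y)\bigr)+\bigl(Y\cdot\omega(X)-v\,\omega(X)\bigr),$$
using that $\omega$ is linear in the vector field. Two structural facts drive the leading-order computation. First, dicriticality forces $\omega_{n+1}=R_n(x\dd y-y\dd x)$, so the radial linear part of $X$ annihilates $\omega_{n+1}$ and $g:=\omega(X)$ has order $n+3$ with leading term $g_0$; Euler's identity applied to the lowest-degree part of $P(X,u)=0$ then forces $u(0)=n+3$. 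Second, to cancel the lowest nonzero homogeneous part of the defect in a degree $d$, I solve the constant-coefficient leading part of $\mathcal{L}(Y,v)=-P(X,u)_d$; here the $v$-channel contributes $-v\,g_0$, sweeping out $(g_0)_d$, while the $\omega(Y)$-channel contributes, after Euler, a nonzero scalar multiple of $\omega_{n+1}(Y)=R_n\cdot\bigl(x\,(Y)_y-y\,(Y)_x\bigr)$, sweeping out $(R_n)_d$ (the remaining term $Y\cdot\omega(X)$ only enlarges the image through the Jacobian ideal of $g_0$).

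Thus in each degree $d\ge n+4$ the image of $\mathcal{L}$ contains $(R_n,g_0)_d$, and surjectivity reduces to the algebraic statement $\mathfrak{m}^{d}\subseteq(R_n,g_0)$. The key geometric input is that $R_n$ and $g_0$ are \emph{coprime} binary forms: the zeros of $R_n$ on $E$ are exactly $T(\mathcal{F})$, where $\widetilde{\mathcal{F}}$ is tangent to $E$, whereas the zeros of $g_0$ record the tangency of $\widetilde{\mathcal{F}}$ with the radial (hence everywhere $E$-transverse) foliation generated by $X$; at a point of $T(\mathcal{F})$ these two foliations are transverse, so $g_0$ cannot vanish there. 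Being coprime of degrees $n$ and $n+3$, the forms $R_n,g_0$ cut out a complete intersection whose socle lies in degree $n+(n+3)-2=2n+1$, whence $\mathfrak{m}^{2n+2}\subseteq(R_n,g_0)$. This is exactly the source of the determinacy threshold: $\mathcal{L}$ is onto in every degree $\ge 2n+2$, so a defect of order $\ge 2n+2$ can be killed degree by degree, the corrections $(Y,v)$ having strictly increasing order (so $Y$ never touches the radial linear part), and the process converges formally to $(\hat X,\hat u)$.

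I expect the main obstacle to be the \emph{endpoint} degree $d=2n+2$, the socle degree, where both channels are only barely active: the $\omega(Y)$-channel carries the Euler factor $n-1$ and the $v$-channel needs a correction of positive order $n-1$, so both degenerate when $n=1$. For $n\ge 2$ everything survives and the argument closes; the case $n=1$ must be treated apart, but there $\DR(1)=\mathcal{D}(1)$ and the conclusion is automatic. A secondary delicate point is to justify the coprimality of $R_n$ and $g_0$ — and hence the complete-intersection socle bound — also when $g_0$ is non-reduced (colliding tangencies); this is precisely where one uses that the tangency-with-radial divisor is supported in $E\setminus|T(\mathcal{F})|$.
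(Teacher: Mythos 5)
Your scheme is, in substance, the paper's own proof: the paper also kills the defect one homogeneous degree at a time, modifying the degree-$(k+3)$ component of $X$ and the degree-$(k+1)$ component of $u$ so that the degree-$(n+k+4)$ component of $X\cdot\omega(X)-u\,\omega(X)$ vanishes, and its key algebraic input is Bezout's identity in $\mathbb{C}[t]$ for the coprime dehomogenizations of $R_n$ and $S_{n+3}=xP_{n+2}+yQ_{n+2}$, with degree bounds that are available exactly from the threshold degree $2n+2$ onward. That is the same fact as your complete-intersection/socle statement $\mathfrak{m}^{2n+2}\subseteq(R_n,g_0)$: since $g_0\equiv \pm S_{n+3}\ (\mathrm{mod}\ R_n)$, the ideals $(R_n,g_0)$ and $(R_n,S_{n+3})$ coincide, and your two ``channels'' ($v\mapsto v\,g_0$ and $Y\mapsto (d-n-3)\,R_n\,\omega_{\radial}(Y)$) are precisely the paper's two free parameters $u_{k+1}$ and $X_{k+3}$. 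Both arguments then conclude by the preceding criterion lemma (formal solution of $\hat X\cdot\omega(\hat X)=\hat u\,\omega(\hat X)$ implies $\mathcal{F}\in\DR(n)$, via Artin approximation), which you invoke correctly.

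The one genuine blemish is your treatment of $n=1$. For $n=1$ the hypothesis $j^{3}\left(X\cdot\omega(X)-u\,\omega(X)\right)=0$ is vacuous once $X$ has radial linear part (the defect automatically has order $\ge n+3=4$), so in that case the lemma is \emph{literally equivalent} to the assertion $\DR(1)=\mathcal{D}(1)$; invoking that assertion to dispose of $n=1$ is therefore circular, unless you supply the independent geometric argument (straighten the four smooth separatrices to their tangent lines, as the paper remarks after the proof). The circularity is, however, easily removed inside your own scheme: for $n=1$ normalize $u(0)=n+3=4$ at the outset (this does not disturb the vacuous hypothesis), which kills the problematic endpoint degree $d=2n+2=n+3$ --- there the defect equals $(n+3-u(0))\,g_0$, i.e.\ it lies entirely in the $v$-channel --- and from degree $2n+3$ on both the Euler factor and the order of $v$ are strictly positive, so your induction runs unchanged.
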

\begin{proof}
In the proof below, the notation $\square_i$ stands for the homogeneous compotent of degree $i$ of the object $\square$. Suppose that there exists a vector field $X$ and a unit $u$ as in the Lemma. We are going to modify the component of degree $k+3$ of $X$ and of degree $k+1$ of $u$ so that
\[j^{n+3+k+1}\left( X\cdot \omega( X)- u\omega( X)\right)=0.\]
That will ensure by induction the existence of a formal solution to the equation (\ref{equationDR}). Now, a straightforward computation shows that
\begin{equation}\label{jetsuivant}
  \left( X\cdot \omega( X)- u\omega( X)\right)_{n+3+k+1}= \omega_{n+1}\left(X_{k+3}+u_{k+1}X_2\right)+u_{k+1}\omega_{n+2}(X_1)+(\cdots)
  \end{equation}
Here the dots $(\cdots)$ stand for terms which depends only of components of $u$ of degree strictly smaller than $k+1$ and of components of $X$ of degree strictly smaller than $k+3$. Let us denote by the component of degree $k$ of $X$ by $X_k=A_k\partial x+B_k\partial y$. From the equation above and using the notation $\omega=R_n(x\dd y-y\dd x)+P_{n+2}\dd x+Q_{n+2}\dd y+\cdots$, we obtain
\begin{eqnarray*}
 \lefteqn{ \left( X\cdot \omega( X)- u\omega( X)\right)_{n+3+k+1}} \\
  &=& R_n\left(yA_{k+3}-xB_{k+3}+u_{k+1}(yA_2-xB_2)\right)+u_{k+1}(xP_{n+2}+yQ_{n+2})+(\cdots)
  \end{eqnarray*}
This equation can always be made equal to $0$ provided that $n-1\leq k+1$: indeed, since $R_n$ and $xP_{n+2}+yQ_{n+2}$ are relatively prime, applying the Bezout result's in $\mathbb{C}[t]$ to the dehomogenized relation above ensures the existence of a polynomial function $\tilde{u}$ of degree smaller than $n-1$ and a polynomial function $\tilde{V}$ of degree smaller than $n+3$ such that
\[\tilde{R}_n\tilde{V}+\tilde{u}\left(\tilde{P}_{n+2}+t\tilde{Q}_{n+2}\right)+\tilde{(\cdots)}=0.\] Since $n-1\leq k+1$ and $n+3\leq k+4$, it can be seen that one can find $A_{k+3}$, $B_{k+3}$ and $u_{k+1}$ such that the equation (\ref{jetsuivant}) is satisfied.
\end{proof}

\begin{cor}
The subset $\divR(\F)$ is an algebraic sub-variety of $\textup{div}(\F)$.
\end{cor}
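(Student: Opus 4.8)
The plan is to show that $\divR(\F)$ is cut out inside the affine space $\text{div}(\F)$ by algebraic equations, exhibiting it as a Zariski-closed subset. The key conceptual input is the preceding Lemma \ref{finitedeter}, which replaces the \emph{formal} condition (\ref{equationDR}) characterizing membership in $\DR(n)$ by a \emph{finite-jet} condition: $\F\in\DR(n)$ as soon as there exist a vector field $X$ with radial linear part and a unit $u$ killing the jet of order $2n+1$ of $X\cdot\omega(X)-u\omega(X)$. Since a jet of order $2n+1$ involves only finitely many Taylor coefficients, this is a condition expressible by finitely many polynomial equations in finitely many unknowns, which is exactly the shape needed for algebraicity. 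So the whole point is to translate ``an element of $\divR(\F)$'' into ``a solution of a finite polynomial system depending algebraically on the divisor.''

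**First I would** fix the defining one-form $\omega=R_n(x\,\dd y-y\,\dd x)+P_{n+2}\,\dd x+Q_{n+2}\,\dd y+\cdots$ of $\F$ and recall the identification from (\ref{eq:div[F]}): a point $D\in\text{div}(\F)$ corresponds to $\text{Tang}(\F,\G)_{|E}$ for a radial foliation $\G\in\D(0)$, and such $\G$ is determined (up to the relevant equivalence) by a vector field $X$ with radial linear part $x\partial_x+y\partial_y$; the divisor $D$ is read off from the degree-$(n+3)$ part of the contraction $\omega(X)$, which lives in the fixed four-dimensional affine space $\text{div}(\F)$. The claim of the Lemma criterion is that $D\in\divR(\F)$ precisely when the corresponding radial foliation can be chosen so that the tangency locus is \emph{invariant}, i.e.\ so that the jet condition $j^{2n+1}(X\cdot\omega(X)-u\omega(X))=0$ holds for some unit $u$.

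**Next I would** set up the elimination. Truncate $X=X_1+X_2+\cdots$ and $u=u_0+u_1+\cdots$ at the finite orders that actually enter the jet of order $2n+1$ of $X\cdot\omega(X)-u\omega(X)$; by Lemma \ref{finitedeter} these are the only data that matter, and there are finitely many coefficients $A_k,B_k$ (of $X$) and $u_j$ of $u$ involved. The vanishing of that jet is then a finite system of polynomial equations, bilinear in the coefficients of $(X,u)$, with coefficients coming from the fixed data $R_n,P_{n+2},Q_{n+2},\ldots$ of $\omega$. The value $D=\omega(X)_{n+3}$ is a polynomial (indeed linear) function of the coefficients of $X$. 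Hence $\divR(\F)$ is the image, under the algebraic map $(X,u)\mapsto \omega(X)_{n+3}$, of the affine algebraic set in $(X,u)$-coefficient space defined by the jet equations. By Chevalley's theorem this image is constructible; to upgrade ``constructible'' to ``closed'' I would use that for each fixed $D$ the fiber is itself an algebraic condition on $(X,u)$, so membership ``$D\in\divR(\F)$'' is equivalent to solvability of a polynomial system \emph{with $D$ as a parameter}, and one argues closedness either by the projective properness of the relevant parameter (the radial direction lives in a $\mathbb{P}^1$ of tangent cones, which is compact) or directly by checking that a limit of divisors in $\divR(\F)$ again admits a solution $(X,u)$ by a compactness/normalization argument on the space of admissible $(X,u)$.

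**The main obstacle** I expect is precisely this last point: passing from the \emph{constructible} image to a genuinely \emph{closed} (hence algebraic) subvariety. The bilinear jet system has the unit constraint $u(0)\neq 0$, which is an open condition and could in principle let solutions escape to the boundary in a limiting divisor; one must verify that along any sequence $D_m\to D_\infty$ in $\divR(\F)$ the accompanying $(X_m,u_m)$ can be normalized (e.g.\ rescaling $X$ so its linear part stays radial and controlling $u_0$ away from $0$) to extract a convergent subsequence whose limit still solves the jet equations with a nonvanishing unit. Because the linear part of $X$ is pinned to be radial and the relevant coefficient spaces are finite-dimensional, this compactness can be arranged, and the nonvanishing of $u_0$ at the limit follows from the multiplicity comparison already used in the proof of the homogeneity criterion (the leading-order balance forces $u_0\neq0$). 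Once closedness is secured, $\divR(\F)$ is a closed constructible subset of the affine space $\text{div}(\F)$, hence an algebraic subvariety, which is the assertion.
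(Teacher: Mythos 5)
Your reduction to Lemma \ref{finitedeter} is exactly the right starting point (it is how the paper derives the corollary), and your side remark that the unit condition cannot degenerate is correct: the lowest-degree balance in $X\cdot\omega(X)=u\,\omega(X)$ forces $u(0)=n+3$, so $u(0)\neq 0$ is automatic. The genuine gap is the step you yourself flag: upgrading \emph{constructible} to \emph{closed}. Neither of your two suggestions achieves this. The auxiliary unknowns are the coefficients of $X_2,\dots,X_n$ and $u_1,\dots,u_{n-2}$, which range over affine spaces; finite-dimensionality gives no compactness, and nothing in your argument prevents the solutions $(X_m,u_m)$ attached to a convergent sequence of divisors $D_m\to D_\infty$ from escaping to infinity, so no convergent subsequence can be extracted. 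The remark about a compact $\mathbb{P}^1$ of tangent cones is not relevant either: what varies is not a direction but the whole finite string of Taylor coefficients of $(X,u)$. As written, your proof establishes only that $\divR(\F)$ is a constructible subset of $\textup{div}(\F)$, which is strictly weaker than what the paper needs (closedness is used, e.g., in the $n=1$ case of Theorem \ref{t:modulispacesDR}).

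What actually closes the gap is the triangular structure of the jet equations, visible in the proof of Lemma \ref{finitedeter} and carried out explicitly for $n=3$ in the proof of Theorem \ref{t:modulispacesDR}. With $S_{n+3}=xP_{n+2}+yQ_{n+2}$, the homogeneous component of degree $n+4+k$ of $X\cdot\omega(X)-u\,\omega(X)$ has the form
\[
R_n\bigl(yA_{k+3}-xB_{k+3}+u_{k+1}(yA_2-xB_2)\bigr)+u_{k+1}S_{n+3}+(\cdots),
\]
where $(\cdots)$ depends only on lower-order data. Since $yA_{k+3}-xB_{k+3}$ can be an arbitrary homogeneous polynomial of its degree, the equation is equivalent to a congruence modulo $R_n$; since $R_n$ and $S_{n+3}$ are coprime, this congruence determines the class of $u_{k+1}$ modulo $R_n$. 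Because $\deg u_{k+1}\leq n-2<\deg R_n$, this pins down $u_{k+1}$ uniquely when a solution exists, and (after normalizing away the gauge freedom $X\mapsto vX$, $v$ a unit with $v(0)=1$) it pins down $X_{k+3}$ as well, both as polynomial functions of the divisor variables; what remains at each degree is a finite list of \emph{linear} compatibility conditions on quantities that are themselves polynomial in the divisor. Unrolling the recursion from degree $n+4$ up to $2n+1$, membership of $D$ in $\divR(\F)$ is equivalent to the vanishing of finitely many polynomials on $\textup{div}(\F)$, so $\divR(\F)$ is Zariski closed by construction — no appeal to Chevalley, and no limiting argument, is needed. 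This elimination, not compactness, is what the corollary rests on.
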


Now, we can give the proof of the Theorem \ref{t:modulispacesDR}.

In what follows, we will denote by $S_{n+3}$ the function $xP_{n+2}+yQ_{n+2}$.

\begin{itemize}
\item For $n=1$, Lemma \ref{finitedeter} ensures that
\[ \divR(\F)=\mbox{div}(E\setminus |T(\mathcal{F})|)(4).\]
Notice also that one can argue the following way for $n=1$: any $4$-uple of smooth invariant curves can be straightened to their tangent lines by a local biholomorphism. In the new coordinates, say $(x,y)$, these four lines are invariant for the radial vector field, $R=x\partial x+y\partial y$. Since the multiplicity of the tangency locus is equal to $4$, the tangency locus is exactly these four lines. Now according to the above corollary, $\divR(\F)$ is closed in $\text{div}(\mathcal{F})$,  which gives the property.
\item For $n=2$, using Lemma \ref{finitedeter} yields $\divR(\F)=\mbox{div}(\F)$. Now, the tangent cone of the tangency locus between $\F$ and $\G\in \mathcal{D}(0)$ is written
\[\left(\omega(X))\right)_5=\underbrace{xP_4+yQ_4}_{H_5}+R_2\underbrace{\left(yA_2-xB_2\right)}_{S_3}=0.\]
This tangent cone reduces to a single multiple point if and only if there exists a unit $u$ and $\alpha\in \mathbb{C}$ such that
\[H_5+R_2S_3=u\left(x+\alpha y\right)^5\]

If $|T(\F)|$ is a single point, up to some linear change of coordinates we can suppose that $R_2=y^2$. Thus, the relation above implies that
\[\left\{\begin{array}{rcl} u&=&H_5(1,0)  \\ \alpha&=&\frac{(H_5)_{4,1}}{5H_5(1,0)} \end{array}\right.\]
where $(H_5)_{4,1}$ is the coefficient of $x^4y$ in $S_5$. If $|T(\F)|$ consists of two distinct points, up to some change of coordinates we can suppose that $R_2=y(y-1)$. In this case, the solutions are given by the system
\[\left\{\begin{array}{rcl} u&=&H_5(1,0)  \\ (1+\alpha)^5&=&\frac{H_5(1,1)}{H_5(1,0)} \end{array}\right.\] which has exactly five solutions for $H_5(1,1)\neq 0$.
\item For $n=3$, following the previous lemma, it is enough to show that there exists a solution to
\begin{equation}\label{jet7}
j^{7}\left( X\cdot \omega( X)- u\omega( X)\right)=0.
\end{equation}
We consider the following notation
\[\omega= R_3(x\dd y-y\dd x)+P_5\dd x+Q_5\dd y+P_6\dd x+Q_6\dd y+\cdots.\]
Below, we are only going to consider the generic case, that is, when $R$ has three distinct points in its tangent cone
\[R=(y-\tau_1 x)(y-\tau_2 x)(y-\tau_3 x).\]
Notice that we can suppose that $\tau_1\neq 0$.
Up to some multiplication by a unit, we can furthermore suppose that the vector field $X$ is written
\[X=x\partial x+y\partial y+A_2\partial x +\delta_4 x^2\partial y+A_3\partial x+B_3\partial y\]
where $A_2$ is written in a Lagrange form
\[A_2(x,y)=\delta_1\frac{(y-\tau_2 x)(y-\tau_3 x)}{(\tau_1-\tau_2 )(\tau_1-\tau_3)}+\delta_2\frac{(y-\tau_1 x)(y-\tau_3 x)}{(\tau_2-\tau_1 )(\tau_2-\tau_3)}+\delta_3\frac{(y-\tau_1 x)(y-\tau_2 x)}{(\tau_3-\tau_1 )(\tau_3-\tau_2)}.\]
Finally, we set $u=6+u_1+\cdots$ where $u_1=u_{10}x+u_{01}y$. Here, the unknown variables are the $\delta$'s, the two coefficients of $u_1$ and the coefficient of $A_3$ and $B_3$. If we denote $\omega(X)=H_6+H_7+\cdots$, the initial equation (\ref{jet7}) is written
\begin{equation}\label{jet72}
H_7+u_1H_6-A_2\partial_xH_6-\delta_4x^2\partial_yH_6=0.
\end{equation}
Now, since $H_7$ is written
\[H_7=xP_6+yQ_6+P_5A_2+Q_5\delta_4x^2+R_3(yA_3-xB_3)\] and thus contains $A_3$ and $B_3$ as free and linear parameters, the equation (\ref{jet72}) has a solution if and only if the evaluation at each point $(1,\tau_i)$ of (\ref{jet72}) which are the roots of $R_3$ yields $0$. After a straightforward computation, we are led to a system of three equations that are written
\[\delta_i^2\frac{\partial R_3}{\partial x}(1,\tau_i)+u_{10}S_6(1,\tau_i)+u_{01}\tau_iS_6(1,\tau_i)+L_i\left(\{\delta_j\}_{j=1..4}\right) =0\quad i=1,2,3\]
where the function $L_i$ are linear functions of $\delta_1,\ \delta_2$ and $\delta_3$ and quadratic in $\delta_4$.
The last two equations $i=2, 3$ can be seen as a linear system in $u_{10}$ and $u_{01}$ whose determinant is
\[\left|\begin{array}{cc} S_6(1,\tau_2) & \tau_2 S_6(1,\tau_2)\\S_6(1,\tau_3) & \tau_3 S_6(1,\tau_3)\end{array}\right| =S_6(1,\tau_2)S_6(1,\tau_3)(\tau_2-\tau_3)\]which is not equal to $0$ because $S_6$ and $R_3$ have no common roots and $\tau_2\neq \tau_3$. Thus, $u_{10}$ and $u_{01}$ can be substitute in the first equation which can be solved because the coefficient of the quadratic term $\delta_1^2$ is equal to $\frac{\partial R_3}{\partial x}(1,\tau_1)\neq 0$. If $R_3$ has for instance a double roots, say $\tau_1$, then the second equation is replace by the partial derivative of (\ref{jet72}) with respect to $y$ applied to $(1,\tau_1)$ which has also to be $0$. Then, the computations are much the same as above. In any case, the final equation is quadratic in the variables $\delta_i$ and thus $\divR(\F)$ is a quadric.
\bigskip

Now, a point of coordinates $(1,t)$ which is in the support of some $\divR(\F)$ is of multiplicity 4 if and only if \[H_6^{(i)}(1,t)=0,\quad i=0,\ldots,3. \]
Let us write
\[H_6(1,t)=S_6(1,t)+R_3(1,t)(\alpha_0+\alpha_1t+\alpha_2t^2+\alpha_3t^3)\]
Notice that the coefficients $\alpha_i$ can be linearly written in terms of $\delta_i$'s. The previous equations can be written
\[\left\{ \begin{array}{rcl} \alpha_0+\alpha_1t+\alpha_2t^2+\alpha_3t^3 &=& -\left(\frac{S_6}{R_3}\right)(1,t) \\
 \alpha_1+2\alpha_2t+3\alpha_3t^2 &=& -\left(\frac{S_6}{R_3}\right)^{(1)}(1,t) \\
 2\alpha_2+6\alpha_3t &=& -\left(\frac{S_6}{R_3}\right)^{(2)}(1,t) \\
 6\alpha_3 &=& -\left(\frac{S_6}{R_3}\right)^{(3)}(1,t) \\ \end{array}\right. \]
Solving the following linear system, we express each coefficient $\alpha_i$ as a rational function of the variable $t$ which appear to be of degree 0, thus,
\[\alpha_i=\frac{\cdots}{R_3^4}\] where the dots stand for some polynomial function of degree at most $12$. Now, if we substitute these expressions in the quadratic equation that defines $\divR(\F)$, we are led to a polynomial equation of degree at most $24$. To check that generically this polynomial function has degree $24$ and $24$ distinct solutions,it is enough to exhibit an example satisfying these two conditions. Using MAPLE\footnote{One can find a MAPLE worksheet doing the mention computation at \url{http://www.math.univ-toulouse.fr/~genzmer/}}, we can compute that for
\[\F: y(y^2-x^2)(x\dd y-y\dd x)+(x^5+y^5+x^4y)\dd x\]
the polynomial function is
\begin{multline*}
 \frac{10}{3}{t}^{24}+20\,{t}^{22}-{\frac {20}{3}}{t}^{21}-{\frac {190}{3}}{t}^{20}-\frac {5944}{3}{t}^{19}-5898{t}^{18}-9472{t}^{17}-{\frac {22709}{3}}{t}^{16}\\
\shoveright{-3732{t}^{15}-1008{t}^{14}-{\frac {5948}{3}}{t}^{13}-2783{t}^{12}-{\frac {8224}{3}}{t}^{11}
 -394{t}^{10} +{\frac {4616}{3}}{t}^{9}}\\
+207{t}^{8}-{\frac {2188}{3}}{t}^{7}-{\frac {40}{3}}{t}^{6}+248{t}^{5}+{\frac {251}{3}}{t}^{4}+{\frac {32}{3}}{t}^{3}-{\frac {40}{3}}{t}^{2}-{\frac {32}{3}}t-\frac{2}{3},
\end{multline*}
which has no common factor with $t(t^2-1)$ and has only simple zeros.
\bigskip

Now, the same argument can be performed when $T(\F)$ has a double or a triple point. Generically, the degree of the polynomial function obtained has above will be respectively 18 and 6.

\item For $n\geq 5$, an obstruction to solve the equation (\ref{equationDR}) appears already for the jet of order $n+4$. Indeed, if we take $R=y^n$, $P_{n+2}=x^{n+2}$, $Q_{n+3}=y^2x^{n+1}$, $Q_{n+2}=P_{n+3}=0$ we have $\partial_xH_{n+3}=(n+2)x^{n+1}+y^n(\ldots)$ and $\partial_yH_{n+3}=y^{n-1}(\ldots)$. The equation on degree $n+4$ becomes
\begin{eqnarray*}
-y^3x^{n+1}=A_2x^{n+2}+y^n(\ldots)+A_2((n+2)x^{n+1}+y^n(\ldots))\\
+B_2(y^{n-1}(\ldots))+u_{1}(x^{n+3}+y^n(\ldots))
\end{eqnarray*} which has no solution if $n\geq 5$ since there is no term in $y^3x^{n+1}$ in the right term of the above equality. Thus $\DR(n)\neq\mathcal{D}(n)$ for $n\geq 5$.

\item For $n=4$, we have also $\DR(n)\neq\mathcal{D}(n)$ but it is much more difficult to find a counter-example. Indeed, the obstruction appears only on the homogeneous term degree $n+5=9$ whereas it appears on the degree $n+4$ for any $n$ bigger than $5$. Actually the following form $$y^4(x\dd y-y\dd x)+x^6\dd x+y^7\dd y$$ does not belong to $\DR(4)$ while it belongs to $\mathcal{D}(4)$. The verification of this last claim can be made using any formal computing program, for instance \textsc{maple} but is too long to be reproduce here. However, it presents no special difficulty.

\end{itemize}

Even if we were not able to prove it, we are convinced that for $n\geq 4$, the space $\DR(n)$ has  strictly positive codimension in $\mathcal{D}(n)$. To support this claim, we remark that whether or not an element of $\mathcal{F}\in \mathcal{D}(n)$ belongs to $\DR(n)$ relies on four parameters of the affine space $\text{div}(\mathcal{F})$ which must satisfy $\frac{n(n-1)}{2}$ equations as highlighted in \cite{GP}. For $n\geq 4$, there are more conditions than parameters.

\subsection{Normal forms in $\DR$}

In this subsection, we are going to prove Theorem \ref{t:normalformsinDR}.

\begin{figure}[h!]
\center{\includegraphics[scale=0.7]{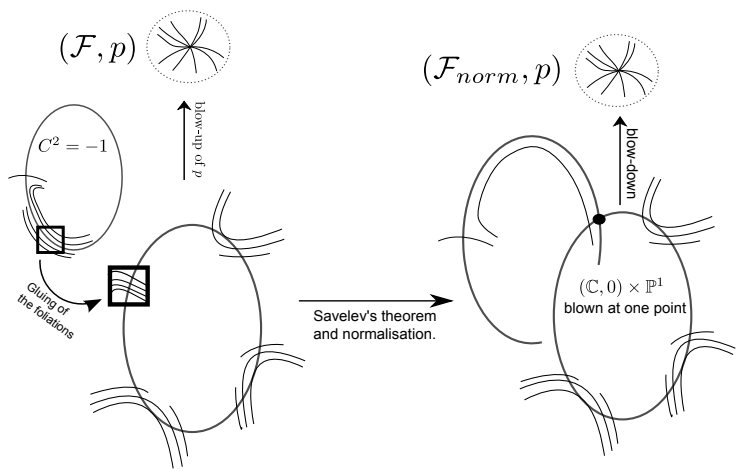}}
\caption{Construction of normal forms.}
\end{figure}

Let $\mathcal{G}$ be a radial foliation satisfying that $\text{Tang}(\mathcal{F},\mathcal{G})$ is invariant by $\mathcal{F}$ and $\text{Tang}(\mathcal{F},\mathcal{G})_{|E}=D$. Consider a coordinate $(x,y)$ in which $\mathcal{G}$ is linear such that the direction $x=0$ corresponds to the point  in the support of $D$. In this situation, the foliations $\mathcal{F}$ and $\mathcal{G}$ are respectively given by  1-forms
\[ A(x,y)(x\dd y-y\dd x)+B(x,y)\dd x \quad\text{and}\quad x\dd y-y\dd x \]
where $B$ is a homogeneous polynomial  of degree $n+2$ and $A$ is holomorphic.

Now consider  the blowing-up of the origin given in local charts by $x=uv$ and $y=v$. In a neighborhood of the leaf $u=0$, we have $\mathcal{G}$ given by $du=0$ and $\mathcal{F}$ given by $a(u,v)du+u^kb(u,v)dv$ where $a$ and $b$ are some local units and $k\geq 1$. As we did in the proof of Theorem \ref{t:construction}, we will compactify the separatrix $u=0$ for the pair $(\mathcal{F},\mathcal{G})$ around it with a convenient model by using Lemma \ref{ptitlemme}. The convenient model is the blow-up at the origin of the pair of germs of regular holomorphic foliations in two variables $(z,w)\in\mathbb{C}^2$ defined by $dw=0$ and $dw+w^{k-1}dz=0$ respectively. Indeed, the exceptional divisor is then invariant for both foliations and the tangency order between the foliations along it is $k$. By using Lemma \ref{ptitlemme}, we can glue a neighbourhood of a regular point for both foliations of this new $(-1)$-curve $C$ to the pair $(\mathcal{F},\mathcal{G})$ along the chosen separatrix. In this way, we obtain a complex bifoliated surface that is a neighbourhood of a union of two $(-1)$-curves that intersect transversely at a point. This neighbourhood is well known. Indeed, let us consider the surface $S$ obtained by blowing-up once the point $(z,x)=(0,0)$ in $\mathbb{CP}(1)\times \mathbb{D}$. We denote by $U_{-1,-1}$ any neighborhood of the union of the total transform of the divisor $x=0$. Notice that this divisor is the union of two smooth rational curves, each of self-intersection equal to $-1$, i.e. two $(-1)$-curves intersecting at a point transversely.

\begin{lem}\label{l:U-1-1}
Let $\hat{C}$ be the union of two $(-1)$-curves embedded in a complex surface that intersect transversely at one point. Then there exists a neighborhood of $\hat{C}$ that is isomorphic to some $U_{-1,-1}$.
\end{lem}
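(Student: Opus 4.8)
The plan is to reduce the two-curve configuration to a single rational curve of self-intersection zero by an elementary contraction, and then to invoke the rigidity of neighbourhoods of such curves. Write $\hat C=C_1\cup C_2$ with $C_1\cdot C_1=C_2\cdot C_2=-1$ and $C_1\cdot C_2=1$. First I would contract $C_1$: being a smooth rational $(-1)$-curve, it is contractible (Castelnuovo--Grauert), so there is a holomorphic map $\pi$ from a neighbourhood of $\hat C$ onto a smooth surface collapsing $C_1$ to a point $p$ and biholomorphic elsewhere. The image $\bar C_2:=\pi(C_2)$ is then a smooth rational curve through $p$ (smooth because $C_2$ meets $C_1$ transversely at a single point), and the projection formula gives $\bar C_2\cdot\bar C_2=C_2\cdot C_2+(C_1\cdot C_2)^2=-1+1=0$. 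Thus a neighbourhood of $\hat C$ is recovered from a neighbourhood of the self-intersection-zero curve $\bar C_2$ by blowing up the single point $p\in\bar C_2$.

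Next I would appeal to the uniqueness of neighbourhoods of a smooth rational curve $\bar C_2$ with $\bar C_2\cdot\bar C_2=0$. Its normal bundle is trivial, the relevant obstruction groups $H^1(\mathbb{CP}(1),\mathcal{O}(k))$ with $k\geq 0$ vanish, and the classical linearisation theorem for such curves (Grauert's formal principle upgraded to an actual biholomorphism by Savelev's convergence argument) gives a biholomorphism $\Phi$ from a neighbourhood of $\bar C_2$ onto a neighbourhood of the zero section $\mathbb{CP}(1)\times\{0\}$ in $\mathbb{CP}(1)\times\mathbb{D}$. Set $q=\Phi(p)\in\mathbb{CP}(1)\times\{0\}$. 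Since $\textup{Aut}(\mathbb{CP}(1)\times\mathbb{D})$ acts transitively on the zero section (through the action of $\textup{PGL}(2)$ on the $\mathbb{CP}(1)$-factor), after composing $\Phi$ with a suitable automorphism I may assume $q=(z,x)=(0,0)$, which is exactly the point blown up to form $U_{-1,-1}$.

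Finally I would use the functoriality of blowing up: a biholomorphism sending $p$ to $(0,0)$ lifts canonically to a biholomorphism between the blow-up of the source at $p$ and the blow-up of $\mathbb{CP}(1)\times\mathbb{D}$ at $(0,0)$, carrying exceptional divisor to exceptional divisor and strict transform to strict transform. The former blow-up reproduces a neighbourhood of $\hat C$ (contraction and blow-up being mutually inverse, $C_1$ reappears as the exceptional divisor and $C_2$ as the strict transform of $\bar C_2$), while the latter is by definition some $U_{-1,-1}$. This gives the desired isomorphism.

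I expect the only genuinely substantial step to be the neighbourhood-uniqueness input of the second paragraph. The formal triviality of the neighbourhood of $\bar C_2$ is automatic from the vanishing of the obstruction cohomology, but promoting this formal isomorphism to a convergent biholomorphism is the delicate point and is precisely where the rigidity of self-intersection-zero rational curves must be invoked; everything else is bookkeeping with intersection numbers and the elementary behaviour of contractions and blow-ups.
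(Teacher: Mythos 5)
Your proof is correct and follows essentially the same route as the paper's: contract one $(-1)$-curve via Castelnuovo, observe the other becomes a smooth rational curve of self-intersection zero, apply Savelev's neighbourhood rigidity theorem to identify it with $\mathbb{CP}(1)\times\{0\}\subset\mathbb{CP}(1)\times\mathbb{D}$, and lift the resulting biholomorphism to the blow-ups at the corresponding points. The only additions are bookkeeping the paper leaves implicit (the projection-formula computation of the self-intersection and the use of an automorphism of $\mathbb{CP}(1)\times\mathbb{D}$ to move the image of the contracted point to $(0,0)$).
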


\begin{proof}
Using a classical result of Castelnuovo (see \cite{Br}), one can contract one of the $(-1)$-curves to a point. Since the $(-1)$-curves meet transversally the self-intersection of the image of the other $(-1)$-curve by the contraction map is zero. Now, the Theorem of Savelev \cite{Sav, U} ensures that there exists a biholomorphism  from a neighbourhood of this curve to $\mathbb{CP}(1)\times \mathbb{D}$ sending the curve to the divisor $x=0$ and the contraction point $p$ to $(0,0)$. This isomorphism can be lifted to the blowing-up of the source at $p$ and that of the target at $(0,0)$ thus producing the desired isomorphism.
\end{proof}

Using  Lemma \ref{l:U-1-1}, the above situation is isomorphic to a couple of foliations defined in the neighborhood surface of type $U_{-1,-1}$. If we contract the image of the curve $C$, we are led to a couple of foliations $\widetilde{\mathcal{F}}$ and $\widetilde{\mathcal{G}}$ defined on $\mathbb{CP}(1)\times \mathbb{D}$ such that $\widetilde{\mathcal{G}}$ is regular and transverse to the divisor $E_0=\mathbb{CP}(1)\times\{0\}$. The foliation $\widetilde{\mathcal{F}}$ is regular and generically transverse to $E_0$. The divisor of tangency $\text{Tang}(\widetilde{\mathcal{F}},E_0)$ coincides with $T(\mathcal{F})$ and the tangency locus $\text{Tang}(\widetilde{\mathcal{F}},\widetilde{\mathcal{G}})$ is invariant. We can choose coordinates
  $(s,t)$ in $(\mathbb{C},0)\times\mathbb{P}^1$ such that $\widetilde{\mathcal{G}}$ is given by $dt=0$ and $(0,\infty)$ is the point where the divisor was contracted. Since $\text{Tang}(\widetilde{\mathcal{F}},\widetilde{\mathcal{G}})$ is invariant by $\widetilde{\mathcal{G}}$ also, $\widetilde{\mathcal{F}}$ is given by a form $$\omega(s,t)=A(s,t)dt+Q(t)ds$$
where $Q$ is a polynomial that has its roots precisely at the common leaves of $\widetilde{\mathcal{F}}$ and  $\widetilde{\mathcal{G}}$. Thus it has degree at most $n+2$. On the other hand the function $A(s,t)$ is holomorphic in $(s,t)$ and polynomial when restricted to each fixed $s$. Since $A(0,t)$ is a polynomial of degree $n$, we have that for each $s\in(\mathbb{C},0)$ there exists a constant $u(s)\in\mathbb{C}^*$ and a unique monic polynomial of degree $n$, $r(s,t)=t^n+\hat{a}_{n-1}(s)t^{n-1}+\cdots+\hat{a}_0(s)$ such that $A(s,t)=u(s)r(s,t)$. Indeed, all the components of the divisor of tangency between $\widetilde{\mathcal{F}}$ and the fibration $ds=0$ pass through the points $(0,t_i)$ where $t_i$ is some root of $r(0,t)=0$. By defining a new variable $x$ by the relation $\dd x=\frac{ds}{u(s)}$, the foliation $\widetilde{\mathcal{F}}$ is represented in the $(x,t)$ variables of $(\mathbb{C},0)\times\mathbb{P}^1$ by  $$(t^n+a_{n-1}(x)t^{n-1}+\ldots+a_0(x))\dd t+Q(t)\dd x$$ for some holomorphic germs $a_0,\ldots,a_{n-1}\in\mathbb{C}\{x\}$.
Blowing up the point $(0,\infty)$ and contracting the strict transform of $0\times\mathbb{P}^1$ which has self-intersection $-1$ corresponds in this chart to setting $t=\frac{y}{x}$. Multiplying  the resulting expression by $x^{n+2}$ gives a holomorphic germ of one-form that has the desired form (\ref{eq:normalformDR}). This finishes the existence part of the statement in Theorem \ref{t:normalformsinDR}.

From this construction, it is clear that once we have chosen a divisor $D$ and an affine coordinate $t$ in the exceptional divisor $E$ such that $t=\infty$ corresponds to the direction $x=0$, we already get a unique polynomial $Q$ provided we impose that $Q(1,t)$ is monic. As can be deduced from Proposition \ref{p:bijection}, two foliations defined by normal forms $W_j(x,y)(xdy-ydx)+Q(x,y)dx$ for $j=1,2$ have the same holonomy if and only if they are equal. Hence the normal form with monic $Q(1,t)$, is unique up to the choices of $D$, the coordinate in $E$ and equivalences that fix each point of $E$. These equivalences can be caracterized by having linear part a multiple of the identity.

For small values of $n$, we can give a precise finite list of normal forms by using the previous argument and the choices of divisors of Theorem \ref{t:modulispacesDR}. This is the object of the following proposition.

We say that an element in $\mathbb{C}\{x\}^m$ is normalized if it is zero or  the first non-zero monomial has coefficient $1$.
\begin{cor}\label{c:normalformssmalln} For any $\mathcal{F}\in\mathcal{D}(n)$ with $n\leq 3$, there exists a finite number of convergent normal forms characterizing the analytical class of $\mathcal{F}$. They are resumed in the following table.

\begin{center}
\begin{tabular}{|p{1,2cm}|p{12cm}|l|}
\hline
$T(\mathcal{F})$& Normal Form & Nr.\\
\hline
$p$&$(y+x^3a(x))(x\dd y-y\dd x)+x^3\dd x$, $a\in\mathbb{C}\{x\}$ is normalized &$1$\\
\hline
$2p$&$(y^2+b(x)x^2y+a(x)x^3)(x\dd y-y\dd x)+x^4\dd x$ where $(a,b)\in\mathbb{C}\{x\}^2$ is normalized & $1$\\
\hline
$p_1+p_2$&$(y(y-x)+b(x)x^2y+a(x)x^3)(x\dd y-y\dd x)+x^4\dd x$ where $a,b\in\mathbb{C}\{x\}$ &10\\\hline
$3p$&$(y^3+c(x)x^2y^2+b(x)x^3y+a(x)x^5)(x\dd y-y\dd x)+(x+\lambda_1y)(x+\lambda_2y)x^3\dd x$ where $(a,b,c)\in\mathbb{C}\{x\}^3$ is normalized  and $\lambda_1,\lambda_2\in\mathbb{C}$&$\leq 6$\\ \hline
$2p_1+p_2$& $(y^2(y+x)+c(x)x^2y^2+b(x)x^3y+a(x)x^5)(x\dd y-y\dd x)+(x+\lambda_1y)(x+\lambda_2y)x^3\dd x$ where $a,b,c\in\mathbb{C}\{x\}$ and $\lambda_1,\lambda_2\in\mathbb{C}\setminus\{1\}$&$\leq 18$\\ \hline
$p_1+p_2+p_3$&$(y(y^2-x^2)+c(x)x^2y^2+b(x)x^3y+a(x)x^4)(x\dd y-y\dd x)+(x+\lambda_1y)(x+\lambda_2y)(x+\lambda_3y)^3\dd (x+\lambda_3y)$ where $a,b,c\in\mathbb{C}\{x\}$ and $\{\lambda_i\}\subset\mathbb{C}\setminus\{1,-1\}$&$\leq 144$\\

\hline
\end{tabular}
\end{center}
When a normal form is non-unique, we can deduce all the equivalent normal forms from any one of them.
 \end{cor}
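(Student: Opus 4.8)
The plan is to read the corollary off Theorems~\ref{t:small multiplicity}, \ref{t:normalformsinDR} and \ref{t:modulispacesDR}, handling each topological type of $T(\mathcal{F})$ in turn. First I would invoke Theorem~\ref{t:small multiplicity}: since $n\leq 3$ forces $\mathcal{D}(n)=\DR(n)$, every foliation under consideration is homogeneous dicritical, so for each choice of $D\in\divR(\mathcal{F})$ Theorem~\ref{t:normalformsinDR} provides a convergent normal form $W(x,y)(x\dd y-y\dd x)+Q(x,y)\dd x$, unique up to an affine coordinate on $E$ and a biholomorphism tangent to a homothety. The whole argument then reduces to spending this residual gauge, together with a finite choice of $D$, to pin $(W,Q)$ to the shapes of the table.

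For each type I would select $D$ inside the distinguished finite family of $\divR(\mathcal{F})$ isolated in Theorem~\ref{t:modulispacesDR}: for $n=1$ a divisor $4p_0$ (available because $\divR(\mathcal{F})=\textup{div}(E\setminus|T(\mathcal{F})|)(4)$ is everything), for $n=2$ a single-point divisor from $q(\mathcal{F})$, and for $n=3$ a divisor carrying a point of multiplicity (generically exactly) four from $q(\mathcal{F})$. Placing this distinguished point at the direction $x=0$, i.e. $t=\infty$, together with the monicity of $Q(1,\cdot)$, fixes $Q$ as the factorization recorded in each row --- a pure power $x^{3}$ or $x^{4}$, or $(x+\lambda_1 y)(x+\lambda_2 y)x^{3}$, etc. --- the leftover linear factors accounting for the simple points of $D$ and supplying the continuous parameters $\lambda_i$.

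Next I would use the remaining symmetries, namely the automorphisms of $E$ fixing the direction at infinity together with the homothety-tangent biholomorphisms, to normalize $W$. Concretely: place the points of $T(\mathcal{F})$ at the standard directions $t=0,1,-1,\ldots$, so that $j^{n}(W)$ becomes the prescribed product $y^{n}$, $y(y-x)$, $y^{2}(y+x)$ or $y(y^{2}-x^{2})$; then exhaust what is left of the gauge to annihilate the lowest-order terms of the coefficient germs, forcing the powers $x^{2},x^{3},x^{5}$ in front of them, and finally to normalize the first surviving coefficient to $1$ when a scaling remains unused. Whether a scaling survives is exactly what distinguishes ``$(a,b,\ldots)$ normalized'' from ``$a,b,\ldots\in\mathbb{C}\{x\}$'' in the table: fixing three distinct tangency points (as for $p_1+p_2$ or $p_1+p_2+p_3$) consumes all of $\textup{Aut}(E)$ and leaves no scaling, whereas a single repeated point leaves one.

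Finally I would count the residual discrete ambiguity, which is the product of the number of admissible divisors $D$ (for $n=2,3$ this is $|q(\mathcal{F})|$), equal to $1,1,5,6,18,24$ in the order of the rows by Theorem~\ref{t:modulispacesDR}, times the number of labelings of the equal-multiplicity points of $T(\mathcal{F})$, equal to $1,1,2,1,1,6$, times any roots-of-unity ambiguity from the leading normalization; these products give $1,1,10,\leq 6,\leq 18,\leq 144$, the inequalities reflecting that $|q(\mathcal{F})|$ may drop below its generic value for special $\mathcal{F}$. The main obstacle I expect is the third step: checking that the homothety-tangent gauge removes precisely the terms needed to produce the exact $x$-orders displayed and no more, which is the genuine computation behind the table, the determination of $Q$ and the final count being bookkeeping on Theorems~\ref{t:normalformsinDR} and \ref{t:modulispacesDR}. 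The same uniqueness up to this explicit finite set yields the closing remark that all equivalent normal forms can be recovered from any one of them.
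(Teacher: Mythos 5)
Your skeleton is the same as the paper's: use $\mathcal{D}(n)=\DR(n)$ for $n\leq 3$, pick a distinguished divisor with a point of maximal multiplicity in its support, apply Theorem~\ref{t:normalformsinDR}, fix an affine coordinate on $E$ by the points of $T(\mathcal{F})$ together with that multiple point, and count choices; your products $5\times 2=10$, $\leq 6$, $\leq 18$ and $24\times 6=144$ are exactly the paper's bookkeeping for the rows where $q(\mathcal{F})$ is finite. But the first row contains a genuine gap. For $n=1$, Theorem~\ref{t:modulispacesDR} gives $\divR(\mathcal{F})=\textup{div}(E\setminus|T(\mathcal{F})|)(4)$, so the admissible divisors $4p_0$ form a \emph{one-parameter family}, not a single element as your count asserts; a priori each choice of $p_0$ (sent to $t=\infty$) yields a different coefficient germ $a$, so neither finiteness nor uniqueness of the normal form follows from your steps, and ``normalize the first surviving coefficient to $1$'' only spends the scaling $t\mapsto\lambda t$, not the extra parameter $p_0$. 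The paper closes exactly this hole with an intrinsic normalization absent from your proposal: when $|T(\mathcal{F})|=1$, after placing the tangency point at $0$, one normalizes the \emph{holonomy germ} $h$ itself --- if $h$ does not extend analytically to $E$ there is a finite set of global coordinates in which $h(t)=\theta t+t^{k}+\cdots$ with $k\geq 3$, and in those coordinates $t=\infty$ is automatically a point of order $\geq 4$ in the support of an element of $q(\mathcal{F})$; if $h$ does extend, it is linear and the normal form is the one with $a=b=c=0$. This simultaneous fixing of the coordinate \emph{and} of the divisor is what makes the entry Nr$\,=1$ provable for the rows $p$ and $2p$.

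A smaller mismatch: your uniform rule ``put the multiple point of $D$ at $t=\infty$'' does not reproduce the table's last row. There the coordinate is exhausted by the three tangency points placed at $0,1,-1$, and the whole divisor $D$ is recorded through the parameters $\lambda_1,\lambda_2,\lambda_3$, which is why the form ends with $(x+\lambda_1y)(x+\lambda_2y)(x+\lambda_3y)^3\,\dd(x+\lambda_3y)$ rather than with $Q\,\dd x$; your prescription would give an equivalent but different-looking family, whereas the corollary claims the table as stated.
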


\begin{pfof}{Corollary \ref{c:normalformssmalln}}
First remark that the normal forms satisfy that the contraction with the radial vector field $x\partial_x+y\partial_y$ gives a set of separatrices of the radial vector field. If $n=1,2$ there is a single separatrix, and for $n=3$ there is one separatrix of order at least four. In other words, we find elements in $q(\mathcal{F})$, the set of divisors in $\divR(\mathcal{F})$ with a single point in its support. To prove the corollary, it suffices to remark that $q(\mathcal{F})$ is non-empty for $\mathcal{F}\in \mathcal{D}(n)$ for $n\leq 3$, and use the points in $|T(\mathcal{F})|$ and the point of order at least four in any element of $q(\mathcal{F})$ to define affine coordinates in $E$. A direct application of Theorem \ref{t:normalformsinDR} gives the convergent normal form. The number of normal forms depends on the number of different affine coordinates defined by a set of geometric conditions among the points in $T(\mathcal{F})$ and $q(\mathcal{F})$.  Since the knowledge of one of the normal form defines the equations that define both sets, we can obtain all normal forms by knowing one of them easily.

In fact it suffices to distinguish three points in $E$ and identify them with points in $\mathbb{P}^1$. If $|T(\mathcal{F})|=3$ we choose them to be $0,1,-1$.
If $|T(\mathcal{F})|=2$, then we choose $0$ and $1$ to be the points in the support and $\infty$ in $q(\mathcal{F})$. If $|T(\mathcal{F})|=1$, then we take $0$ to be the point in the support and $\infty$ the multiple point in $q(\mathcal{F})$. In full generality, we lack a third point with geometric significance to define the coordinate in $E$. In that case, we notice the following: if the germ of holonomy $h:(E,p)\rightarrow(E,p)$ of order $m$ does not extend analytically to $E$, then there exists a finite set of  coordinates $t$ in $E$ such that  $h(t)=\theta t+t^k+\cdots$ with $k\geq 3$ and $\theta^{n+1}=1$, and $t=\infty$ is a point of order at least $4$ in the support of an element of $q(\mathcal{F})$. If then holonomy extends, then it is linear`. Therefore the normal form corresponds to the case $a,b,c=0$.

\end{pfof}
The convergent normal forms we have obtained look quite similar to the formal normal forms constructed by Ortiz, Rosales and Voronin in \cite{orv} and the convergent normal forms in the case $n=1$ ( see \cite{orv1}). The main difference is that we first choose a radial foliation with respect to which the given foliation is homogeneous. The convergence of its linearization map gives us the convergence of the normal forms.

\section{Unfoldings in $\DR$.}
In this section, we first give the proof of  Theorem \ref{t:unfoldings} which describes some non-trivial equisingular unfoldings for homogeneous foliations in $\DR$. Notice that, in this theorem, if the initial $1-$form is polynomial then the unfolding is also polynomial. In this way, we provide non-trivial deformations of global foliations in $\mathbb{CP}^2$ of constant topological type, a situation that is generically impossible (see \cite{IL}). In fact, each of these unfoldings compactifies to a codimension one foliation in a projective space of bigger dimension.

Then, we establish that these unfoldings are really \emph{new} examples by constructing some having no special property of integrability whereas all known examples until now presented a Louvillian first integral.

Finally, we prove Corollary \ref{c:universal unfolding} providing examples of universal equisingular unfoldings

\subsection{Proof of Theorem \ref{t:unfoldings}.}
\label{s:unfoldings}
Let us consider the rational map
\[
\Lambda\left(x,y,c\right)=\left(1+\frac{\left\langle c,y\right\rangle }{x}\right)\cdot\left(x,y\right)
\]
where $\left\langle c,y\right\rangle =\sum_{i=1}^{n-1}c_{i}y^{i}.$ By construction, it has an indeterminacy set at $x=0$ and fixes every other lines passing through the origin.

The $1$-form $\Omega$ in $\mathbb{C}^2\times \mathbb{C}^{n-1}$ as defined in the statement of Theorem \ref{t:unfoldings} is the pull-back of $\left.\Omega\right|_{c=0}=\omega $ by $\Lambda$ up to some multiplication by a meromorphic unit $u$. In
particular, $\Omega$ is an integrable $1-$form since we have
\[
u^{2}\Omega\wedge \dd\Omega=\Lambda^{*}\left(\left.\Omega\right|_{c=0}\wedge\left.\mbox{d}\Omega\right|_{c=0}\right)=0.
\]
After the blow-up $E$ of the singular locus $\left\{ x=0,y=0\right\} $, $\Omega$ is written in the coordinates of the blow-up $y=xt$
\[
E^{*}\Omega=\left(R\left(1,t\right)+\sum_{i=1}^{n-1}a_{i}\left(x+\left\langle c,tx\right\rangle \right)t^{i}\right)\mbox{d}t+Q\left(1,t\right)\dd\left(x+\left\langle c,tx\right\rangle \right).
\]
The induced foliation restricted to a fibre of $(x,y,c)\mapsto c$ over a point $c=(c_1,\ldots,c_{n-1})$ such that $1/c_1$ is not a root of $R(1,t)=0$ lies in $\mathcal{D}(n)$.
The tangency locus with the exceptional divisor $x=0$ is equal $\left\{ x=0,t=t_{i}\right\} $
where $t_{i}$ is a solution to of $R\left(1,t\right)=0.$ Since the curves ${\{x=0,t=t_{i}\}}=0$ are contained in a invariant hypersurface of $E^{*}\Omega$, the $1-$form $\Omega$ defines an equireducible unfolding of $\left.\Omega\right|_{c=c_0}$ for any $c_0$ lying in the Zariski open set $U=\{c\in\mathbb{C}^{n-1}: c_1t_i\neq 1, \forall t_i\}$

Suppose now that $\Omega$ is trivial along a certain smooth submanifold
of the space of parameter. Then, there exists a germ of application
\[
c:t\in\mathbb{C}\to\left(c_{1}\left(t\right),c_{2}\left(t\right),\cdots,c_{p}\left(t\right)\right)
\]
with $c\left(0\right)=0$ and $c^{\prime}\left(0\right)\neq\left(0,\cdots,0\right)$
such that $\left.\Omega\right|_{c\left(t\right)}$ is a trivial unfolding
of one variable. Now $\left.\Omega\right|_{c\left(t\right)}$ is written
\begin{eqnarray*}
\left(R\left(x,y\right)+\sum_{i=1}^{n-1}a_{i}\left(x+\left\langle c_{i}\left(t\right),y\right\rangle \right)y^{i}x^{n-i}\right)\omega_{R}+Q\left(x,y\right) \dd\left(x+\left\langle c_{i}\left(t\right),y\right\rangle \right)=\\
\qquad\left(\cdots\right)\omega_{R}+Q\left(x,y\right)\dd x+Q\left(x,y\right)\sum_{i=1}^{n-1}c_{i}\left(t\right)iy^{i-1}\dd y+\left(Q\left(x,y\right)\sum_{i=1}^{n-1}c_{i}^{\prime}\left(t\right)y^{i}\right)\dd t
\end{eqnarray*}
Following \cite{M}, the triviality of $\left.\Omega\right|_{c\left(t\right)}$ implies that the coefficient of $\dd t$ belongs
to the ideal generated by the coefficients of $\dd x$ and $\dd y$. If there
exists such a relation, we can evaluate it for $t=0$ and find polynomial functions $A$ and $B$ such that
\begin{equation}\label{superbieneq}
PQ=A\left(Q-yW\right)+xBW.
\end{equation}
where $P\left(y\right)=\sum_{i=1}^{n-1}c_{i}^{\prime}\left(0\right)y^{i}$. Since, $Q$ and $W$ are relatively prime, then there exists $\Delta$ such that $xB-yA=\Delta Q$ and thus $A=P-W\Delta$. Therefore, rewriting (\ref{superbieneq}) yields
\begin{equation}\label{eq:jetnunfolding}
-yP-Q\Delta+yW\Delta+xB=0
\end{equation}
Since the orders of $Q$ and $W$ at $0$ are $n+2$ and $n$, evaluating the jet of order $n$ of the above equality gives
\[-yP+x\textup{Jet}_n(B)=0\]
and thus $P=0$ which is impossible. This proves that the unfolding is non-trivial for $c_0=0$. To prove it for $c_0\in U$, it suffices to remark that the jet $n$ of equation (\ref{eq:jetnunfolding}) remains exactly the same if we impose $c(0)=c_0$ instead of $c_0(0)=0$.

\subsection{An example with no special integrability property}
As already explained, the  Theorem \ref{t:unfoldings} provides the first \emph{non trivial} examples of unfolding of singularities of foliations in $\mathbb{C}^2$. Actually, the only examples known until now were the unfoldings of singularities admitting a Louvillian first integral. Let us be more specific.: in this situation, the foliation is given by a closed form \[\omega=f_1f_2\cdots f_p\sum_{i=1}^p \lambda_i\frac{\dd f_i}{f_i}\]
where the $f_i$´s are holomorphic functions and the $\lambda_i$'s are complex numbers. In such situation, it is enough to unfold the holomorphic function $f_1f_2\cdots f_p$ to deduce an unfolding of $\omega_0$. Indeed, consider any analytical topologically trivial deformation $f_{i,\epsilon}$ of $f_i,\ f_{i,0}=f_i$ with $\epsilon\in \left(\mathbb{C},0\right).$ The one form of $\mathbb{C}^{2+1}$ defined by
 \[\Omega=f_{1,\epsilon}f_{2,\epsilon}\cdots f_{p,\epsilon}\sum_{i=1}^p \lambda_i\frac{\dd f_{i,\epsilon}}{f_{i,\epsilon}}\]is naturally closed and thus integrable. Therefore, it defines an unfolding of $\omega$.

To ensure that our theorem produces new examples, we are going to exhibit a foliation in $\DR$ that does not admit any Louvillian first integral.

Suppose  $\mathcal{F}$ lies in $\mathcal{D}(1)$ and admitting a Louvillian first integral. After the blow-up and the restriction of this first integral to $E$, we can see that $h=H(\F)$ admits a Louvillian first integral on $E$, that is there exists a non-constant holomorphic germ $f$ on $E$ such that $f\circ h=h$ and $df$ extends to a meromorphic closed one-form on $E$.  We know that such a Liouvillian function on a rational curve admits at most a countable number of singularities, that is, homotopy classes of paths along which the analytical extension of the germ of function is impossible. Below, we produce an example of periodic map $h$ such that any first integral has an uncountable number of singularities. Any foliation admitting this germ $h$ as holonomy will not admit a Liouvillian first integral.

\begin{lem}\label{l:singularity}
Let $D\subset\mathbb{C}$ be a region containing $0$ and $h:D\rightarrow D$ a holomorphic mapping, $h(0)=0$ and the germ at $0$ of $h$ satisfies $h^{\circ n}=\text{Id}$ for $n\in\mathbb{N}$. Suppose that the set of singularities of $h$ in $\partial D$ has an accumulation point $p$ and there exists a continuous extension of $h$ to a neighbourhood of $p$ in $\partial D$ satisfying $h(p)\in D$. Then any non-constant holomorphic first integral $f$ of $h$ that is defined and holomorphic on $D$ has a singularity at $p$.
\end{lem}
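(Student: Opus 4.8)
The plan is to argue by contradiction: assume that $f$ extends holomorphically to a full neighbourhood $U$ of $p$ in $\mathbb{C}$, and show that this forces $h$ itself to extend holomorphically to a neighbourhood of $p$. This is absurd, because the singularities $p_k\to p$ of $h$ would then lie in the domain of holomorphy of the extended $h$ for $k$ large, and hence could not be singularities.

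First I would propagate the relation $f\circ h=f$ up to the boundary near $p$. Writing $q:=h(p)\in D$, the map $h$ is defined and continuous on $D\cup V$, where $V\subset\partial D$ is the given relative neighbourhood of $p$, while $f$ is (by assumption) continuous on $U\supseteq V$. Since every point of $V$ is a limit of points of $D$ and $f\circ h=f$ holds on $D$, continuity yields $f\circ h=f$ on $D\cup V$; in particular $f(q)=f(p)=:c_0$. Note that $q$ is an \emph{interior} point of $D$, so $f$ is holomorphic and non-constant near both $p$ and $q$.

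The heart of the argument is to compare the local behaviour of $f$ at $p$ and at $q$. Setting $F:=f-c_0$, the functional equation reads $F\circ h=F$. Since $h$ is holomorphic and injective on $D$ (as a holonomy map it is locally invertible) and extends continuously to $V$, its continuous extension is an orientation-preserving local homeomorphism near $p$, so its local mapping degree at $p$ equals $1$. Multiplicativity of the local degree under composition, applied to $F=F\circ h$, then gives
\[
\mathrm{ord}_p(F)=\mathrm{ord}_q(F)\cdot\deg_p(h)=\mathrm{ord}_q(F)=:m.
\]
I expect this step — establishing that the orders of vanishing of $F$ coincide at the boundary point $p$ and at its interior image $q$, via a local-degree computation at a point of $\partial D$ — to be the main technical obstacle, precisely because $D$ is one-sided near $p$ and $h$ is there only continuous, not a priori holomorphic.

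Once $m=\mathrm{ord}_p(F)=\mathrm{ord}_q(F)$ is known, I would choose local holomorphic coordinates $\zeta$ at $p$ and $\xi$ at $q$ (possible since $F$ vanishes to finite order $m$ at each) with $\zeta(p)=\xi(q)=0$, $F=\zeta^m$ near $p$ and $F=\xi^m$ near $q$. The identity $F\circ h=F$ becomes $\xi(h(z))^m=\zeta(z)^m$, so $\xi\circ h=\rho\,\zeta$ where $\rho(z)$ is an $m$-th root of unity; being continuous on the connected punctured one-sided neighbourhood of $p$ on which it is defined, $\rho$ is a constant $\rho_0$. Hence $h=\xi^{-1}(\rho_0\,\zeta)$ on $D\cup V$ near $p$, and the right-hand side is holomorphic on the \emph{full} neighbourhood $U$. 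This provides a holomorphic extension of $h$ across $\partial D$ at $p$, contradicting the accumulation of singularities of $h$ at $p$ and completing the proof.
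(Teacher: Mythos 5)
Your overall strategy --- assume $f$ extends holomorphically across $p$, deduce that $h$ itself extends holomorphically across $p$, and contradict the accumulation of singularities of $h$ at $p$ --- is coherent as a plan, but it hinges entirely on the step you yourself flag as the main obstacle, and that step is a genuine gap, not a routine verification. The map $h$ is defined and continuous only on $D\cup V$, a \emph{one-sided} neighbourhood of $p$ (an open piece of $D$ together with a boundary arc). For such a map there is no well-defined local mapping degree $\deg_p(h)$, and the multiplicativity formula $\mathrm{ord}_p(F\circ h)=\mathrm{ord}_q(F)\cdot\deg_p(h)$ is a statement about maps defined on full neighbourhoods; nothing in your argument replaces it. Worse, the conclusion you want is exactly what is at stake. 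Write $m=\mathrm{ord}_p(F)$ and $m'=\mathrm{ord}_q(F)$; the relation $F\circ h=F$ on $D$ near $p$ gives $(\xi\circ h)^{m'}=\zeta^{m}$ there. If $m'$ divides $m$, then $\xi\circ h=\rho\,\zeta^{m/m'}$ for a locally constant root of unity $\rho$ and your extension argument goes through. But if $m'\nmid m$ (say $m=1$, $m'=2$), the same relation forces $\xi\circ h$ to be a branch of a fractional power $\zeta^{m/m'}$ on the one-sided region: such a function is holomorphic on $D$ near $p$, injective, continuous up to the boundary with value $0$ at $p$, and it does \emph{not} extend holomorphically across $p$ --- so no contradiction arises at all in that case. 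Ruling out $m'\nmid m$ is precisely the content of the missing step (note that $m=m'$ is more than you need; $m'\mid m$ suffices), and the local-degree heuristic cannot supply it. A side remark: your justification of global injectivity of $h$ on $D$ ("as a holonomy map it is locally invertible") is also a non sequitur; injectivity comes from propagating $h^{\circ n}=\mathrm{Id}$ over the connected set $D$ by the identity theorem, not from local invertibility.

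It is instructive to compare with the paper's proof, which sidesteps the degenerate-order problem entirely by never trying to extend $h$ at $p$ itself. For each singularity $p_k$ of $h$ accumulating at $p$ (where $f$ extends analytically, since it extends to a whole neighbourhood of $p$), the paper looks at the graph relation $f(y)=f_{\gamma}(x)$ satisfied by $h$: if $f'(h(p_k))\neq 0$, the implicit function theorem would extend $h$ analytically to $p_k$, contradicting that $p_k$ is a singularity; hence $f'(h(p_k))=0$ for every $k$. Since the continuous extension gives $h(p_k)\rightarrow h(p)\in D$, the zeros of $f'$ accumulate at an \emph{interior} point of $D$, so $f'\equiv 0$ and $f$ is constant, contradiction. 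In other words, the paper only ever needs the nondegenerate case of the extension argument (pure implicit function theorem) and converts the degenerate case into zeros of $f'$ which it then accumulates inside $D$; your proposal tries to handle the degenerate case head-on at $p$ via vanishing orders, and that is exactly where it breaks. To repair your proof you would either have to establish $m'\mid m$ by some independent argument, or simply switch to the paper's mechanism.
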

\begin{pf}
Before starting the proof,  remark that there is an example of an integral having the said properties, namely  $f=Id+h+h\circ h+\ldots+h\circ\ldots^{n-1}\circ h$.
Let $\gamma:[0,1]\rightarrow\overline{D}$ be a path that satisfies $\gamma^{-1}(D)=[0,1)$, $\gamma(0)=0$ and $\gamma(1)$ is a singularity of $h$.  If a first integral $f$ admits analytic extension, we denote its extension by $f_{\gamma}$ and $\lim_{t\rightarrow 1}h(\gamma(t))\in D$, then part of the graph of $h$ is contained in the set $$\{(x,y)\in\text{dom}(f_{\gamma})\times D: f(y)=f_{\gamma}(x)\}.$$
If $f'(h(\gamma(1))\neq 0$, the implicit function theorem tells us that $h$ extends analytically to $\gamma(1)$, which is not possible, so the only possibility is that for every singularity $q\in\partial D$ of $h$ where $h$ extends continuously with value in $D$ and $f$ extends analytically , $f'(h(q))=0$. Suppose for a contradiction that $f$ extends analytically to $p$. Since there exists continuous extension of $h$ to a neighbourhood of $p$ in $\partial D$, for a sequence $p_n$ of singularities of $h$ that accumulate on $p$ we have $f'(h(p_n))=0$ for all $n$. The convergence of $h(p_n)$ to $h(p)\in D$ then tells us that $f$ is constant, contrary to assumption. Hence $p$ is a singularity for $f$.
\end{pf}

To finish let us provide an example of such an $h$ with curves of singularities. Let $D$ be a simply connected plane region bounded by a Jordan curve of class $\mathcal{C}^1$ that is not analytic such that for the rotation $\theta$ of angle $2\pi/n$ around a point $p\in D$, the intersection points of $\partial D$ and $\theta(\partial D)$ are points of transversality between the curves. Let $D_p$ be the connected component of $D\cap\theta(D)$ containing $p$ and $A=\partial D_p\cap h^{-1}(D_p)$. Now, if $\varphi:\overline{\mathbb{D}}\rightarrow \overline{D}$ is the homeomorphic extension of Riemann's mapping Theorem between $\mathbb{D},0$ and $D,p$ to the boundaries given by Carath\' eodory's Theorem, then the continuous map $h:\overline{D_0}\rightarrow \overline{D_0}$ defined by $h=\varphi^{-1}\circ\theta\circ\varphi$ on $D_0=\varphi^{-1}(D_p)$, has order $n$ at $0$ and singularities at each of the points of $\partial D_0$ (see \cite{PM}, p. 628). By construction the values of the extension on $\varphi^{-1}(A)$ belong to $D_0$, so Lemma \ref{l:singularity} guarantees that the points of $\varphi^{-1}(A)$ are singular for any germ of first integral of $h$ around $0$ that is holomorphic on $D_0$.

In fact we have proven that any non-constant first integral of $h$ will not extend to any point in a curve, which is not a countable union of complex analytic sets. Hence no foliation admitting $h$ as holonomy germ will not admit a first integral that is holomorphic (and possibly multivalued) outside a countable union of  analytic sets.
\subsection{Global universal equisingular unfoldings in $\DR(2)$}
In this subsection we prove Corollary \ref{c:universal unfolding}. The form $\Omega(x,y,c)=[y^2+a(x)y(x+cy)+b(x)](x\dd y- y\dd x)+(x+cy)^4\dd x$ defined in the statement can be pulled back by the biholomorphism $(x,y,c)\mapsto (x+cy,y,c)$ to obtain $$[y^2+a(x-cy)yx+b(x)](x\dd y- y\dd x)+x^4\dd (x-cy). $$
This form corresponds precisely to the one appearing in Theorem \ref{t:unfoldings} up to a sign on $c$. Hence it is a non-trivial equisingular unfolding around each fixed parameter $c\in \mathbb{C}\setminus\{t: t^2+a(0)t+b(0)=0\}$. Since the dimension of the base space of the obtained unfolding coincides with the dimension of the universal equisingular unfolding of $\mathcal{F}_c$ , they are equivalent. It is worth mentioning that the topological type of the parameters that were excluded is not contained in $\mathcal{D}$. It is actually singular after blow-up. However these allow to compactify the foliation along the parameter space.

Let us analyze the analytic invariants of $\mathcal{F}_c$ along the parameter space. By construction $H(\mathcal{F}_c)=H(\mathcal{F}_0)$ for all admissible $c$'s. The position of the divisors $q(\mathcal{F}_c)$ defined in the proof of Theorem \ref{t:modulispacesDR} depend on $c$ by a holomorphic (possibly multivalued) non-constant function that assumes any value in $\mathbb{C}\setminus |T(\mathcal{F}_0)|$. By Theorems \ref{t:invariantsinDR} and \ref{t:modulispacesDR} we cover all analytic classes of foliations in $\mathcal{D}(2)$ having the same holonomy $H(\mathcal{F}_0)$.

It remains to see that any choice of rotations around points in $E$ is realized as the holonomy of a normal form.   Once we have a holomorphic germ in $\mathcal{D}(2)$ with given holonomy generators we can consider the coefficients $a$ and $b$ of one of its normal forms in Corollary \ref{c:normalformssmalln} as $\mathcal{F}_0$. The realization part can be found in section \ref{s:holonomy}.

\section{Classification in $\mathcal{D}$.}
In this section, we are going to prove Theorems \ref{t:formalnormalforms}, \ref{t:invariants} and  \ref{thm:invariantsinD} and their corollaries.

\subsection{Formal normal form in $\mathcal{D}$ and proof of Theorem \ref{t:formalnormalforms}}
The proof follows the lines of the proof of Theorem 4 in \cite{orv} with the appropriate changes to generalize to foliations in $\mathcal{D}$. In that paper the divisor $T(\mathcal{F})$ is assumed to have only simple points in its support, which is the generic case.

Throughout this subsection, whenever a system of coordinates $(x,y)$ is given, the radial vector field $x\partial_x+y\partial_y$ will be denoted by $\radial$. Its dual form $x\dd y-y\dd x$ is denoted by $\omega_{\radial}$. For convenience, we are going to use the vector fields rather than the $1-$forms.

Recall that two formal vector fields $V,\widetilde{V}$ in $(\mathbb{C}^2,0)$ are said to be \emph{formally E-equivalent} $V\simfor \widetilde{V}$ if there exists a formal pair of power series $\phi(x,y)=(\lambda x+\ldots,\lambda y+\ldots)$ and a formal unit $u(x,y)=\mu+\cdots$ such that $$\widetilde{V}=u\cdot\phi_*(V).$$
If the vector fields converge, $\phi$ is a formal equivalence between the induced foliations.

\textbf{Notations and conventions.}\textit{ Later in this section, $j^{m}(\square)$ and $\square_m$ stand respectively for the jet of order $m$ and the component of homogeneous degree $m$ of $\square$. Moreover, a vector field $V$ is said to have a \emph{normalized} homogeneous term of degree $n+N\geq n+2$ that is written $$V_{n+N}=P\partial_x+Q\partial_y$$ if both $P(1,t)$ and $\omega_{\radial}(V_{n+N})(1,t)$ have degree at most $n-1$ and the second has order at least $N$ at $0$.}

Let us proceed to the proof of Theorem \ref{t:formalnormalforms}.

Let $(x,y)$ be coordinates such that $x=0$, $y=x$ and $y=0$ define the points $p_0, p_1$ and $p_{\infty}$ in $E$ respectively and $V=P\partial_x+Q\partial_y$ be the holomorphic vector field representing $\mathcal{F}$ such that $P$ is a Weierstrass polynomial in $y$. Then  $j^{n+1}(V)=R\radial$ for a unique homogeneous polynomial $R(x,y)$ of degree $n\geq 1$ with $R(0,y)=y^n$. The regularity of the foliation after blow-up is equivalent to the fact that $R$ has no common factors with $\omega_{\radial}(V_{n+2})=yP_{n+2}-xQ_{n+2}$.

After applying a transformation of type $\lambda\text{Id}$ for some $\lambda\in\mathbb{C}^*$ and multiplying by an appropriate unit, we can suppose that $R(0,y)=y^n$ and $Q_{n+2}(x,0)=x^{n+2}$. In what follows, all changes of coordinates will be tangent to the identity and all units will be equal to $1$ at the origin. Thus $R$ and $Q_{n+2}(x,0)$ do not change along the next changes of coordinates.

Next we are going to normalize recursively the homogeneous components of $V$. To do so, we will use of two types of changes of coordinates. Below, we describe these changes of coordinates and compute how they affect the homogeneous components of $V$. The formal normal form will be the limiting vector field obtained after infinite recursion.
\begin{itemize}
\item  If $\phi_N=(\alpha, \beta)$ is a homogeneous vector field of degree $N\geq 2$, and we consider the vector field $\widetilde{V}=\widetilde{P}\partial_x+\widetilde{Q}\partial_y$ obtained by pushing $V$ by the transformation $\phi=\text{Id}+\phi_N+\text{h.o.t.}$ we have that
\begin{eqnarray}\label{eq:normalizetangwithradial}
j^{n+N}(\widetilde{P})&=&j^{n+N}(P)+(N-1)R\alpha-x(\alpha\partial_xR+\beta\partial_y R)\\
  j^{n+N+1}(\omega_{\radial}(\widetilde{V}))&=&j^{n+N+1}(\omega_{\radial}(V))+ (N-1)R\omega_{\radial}(\phi_N)\label{eq:radialcoord}
\end{eqnarray}

\item Moreover, if $\phi_N$ preserves the radial foliation, i.e. there exists a homogeneous polynomial $\gamma$ of degree $N-1$ such that $\alpha=x\gamma$ and $\beta=y\gamma$, then setting $\widetilde{V}^{\gamma}=(1-(N-1-n)\gamma)\widetilde{V}$, we get
\begin{eqnarray}
j^{n+N}(\widetilde{P}^{\gamma})&=&j^{n+N}(P) \\
j^{n+N+2}(\omega_{\mathcal{R}}(\widetilde{V}^{\gamma}))&=&j^{n+N+2}(\omega_{\mathcal{R}}(V))-N\gamma\omega_{\radial}(V_{n+2})
\end{eqnarray}
\end{itemize}
In particular if $V$ was normalized up to order $n+N$, so is $\widetilde{V}^{\gamma}$.
\bigskip

We will consider a sequence of equivalent vector fields $V^N$, each normalized up to order $n+N$.
Start with $V$ and find a homogeneous vector field of degree $N=2$, $\phi_2$ such that the right hand side of (\ref{eq:radialcoord}) becomes a polynomial of degree less than $n$. By dehomogeneizing the equation there is a unique possibility for $\omega_{\radial}(\phi_2)$ given by euclidean division in the ring $\mathbb{C}[t]$. The push forward of $V$ by $\text{Id}+\phi_2$ gives a vector field $\widetilde{V}$ satisfying $\omega_{\radial}(\widetilde{V})$ is normalized up to order $n+2$. Let $\eta$ be the unique homogeneous polynomial of degree $N-1$ such that $\widetilde{Q}(1,t)+\eta(1,t)R(1,t)$ is of degree less than $n$. Then $V^2=(1-\eta)\widetilde{V}$ is normalized up to order $n+2$ and $V^2\simfor V$. Suppose for induction that $N\geq 2$, $V^N$ is normalized up to order $n+N$ and $V^N\simfor V$. Let us find a $V^{N+1}\simfor V^{N}$ normalized up to order $n+N+1$.
The key ingredient for the induction is
\begin{lem}
Given a homogeneous polynomial $H$ of degree $n+N+2$ in $(x,y)$ there exists a unique polynomial $q_{n+N+2}$ in the $\mathbb{C}$- vector space $V$ generated by $y^jx^{n+2+N-j}$ for $j=N\ldots, n-1$, and homogeneous polynomials $A$ and $B$ of degrees $N+2$ and $N-1$ respectively such that $$H=q_{n+N+2}+AR+B\omega_{\radial}(V_{n+2})$$
\end{lem}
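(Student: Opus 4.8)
The statement is best read as an assertion about the graded ring $\C[x,y]$ (here $\C[x,y]_d$ denotes the space of forms of degree $d$) modulo the ideal $I=(R,S)$ generated by $R$ and $S:=\omega_{\radial}(V_{n+2})$; the only structural hypothesis available is that $R$ and $S$ are coprime, which is exactly the regularity of $\F$ after one blow-up recalled above. Write $V=\langle y^jx^{n+2+N-j}:N\le j\le n-1\rangle$; factoring out $x^{N+3}y^N$ shows $V=x^{N+3}y^N\,\C[x,y]_{n-1-N}$, so $\dim V=\max(0,n-N)$ and $V=0$ once $N\ge n$. Since $\deg R=n$ and $\deg S=n+3$, degree bookkeeping gives $I_{n+N+2}=R\,\C[x,y]_{N+2}+S\,\C[x,y]_{N-1}$; hence, once a suitable $q_{n+N+2}\in V$ is produced, the forms $A,B$ of degrees $N+2,N-1$ with $H-q_{n+N+2}=AR+BS$ exist automatically, and the lemma is equivalent to the single graded statement $\C[x,y]_{n+N+2}=V\oplus I_{n+N+2}$ (the uniqueness being asserted only for the $V$-component $q_{n+N+2}$, not for $A,B$).

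First I would compute the dimension of the cokernel. As coprime forms, $R$ and $S$ form a regular sequence, so $\C[x,y]/(R,S)$ is a graded complete intersection with Hilbert series $(1-z^n)(1-z^{n+3})/(1-z)^2$; the coefficient of $z^{n+N+2}$ equals $\#\{j:N\le j\le n-1\}=\max(0,n-N)=\dim V$. In particular, for $N\ge n$ the cokernel already vanishes and the lemma holds trivially with $q_{n+N+2}=0$. For $N\le n$ the dimensions of $V$ and of $\C[x,y]_{n+N+2}/I_{n+N+2}$ agree, so the desired direct sum is equivalent to the transversality $V\cap I_{n+N+2}=0$; this single fact then yields existence and uniqueness of $q_{n+N+2}$ at once.

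The transversality is the heart of the proof and the step I expect to be the real obstacle. Here I would exploit the normalizations fixed just before the lemma, $R(0,y)=y^n$ and $Q_{n+2}(x,0)=x^{n+2}$, the latter giving $S(x,0)=-x^{n+3}$; equivalently $x\nmid R$ and $y\nmid S$. Dehomogenizing in the chart $y=1$ (so $s=x/y$), $V$ becomes $\langle s^{N+3},\dots,s^{n+2}\rangle$, the polynomial $\hat R(s)=R(s,1)$ satisfies $\hat R(0)=1$, and $\hat S(s)=S(s,1)$ has degree exactly $n+3$; Euclidean division by $\hat S$ peels off the $B$-term and reduces the claim to the complementarity, inside $\C[s]/(\hat S)$, of the subspaces $\hat R\,\langle 1,\dots,s^{N+2}\rangle$ and $\langle s^{N+3},\dots,s^{n+2}\rangle$. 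Equivalently, the operator $T=\mathrm{Id}+(\hat R-1)\,\pi_{\mathrm{low}}$ on $\C[s]/(\hat S)$, with $\pi_{\mathrm{low}}$ the projection onto degrees $\le N+2$, must be invertible. The difficulty is that this invertibility genuinely requires the \emph{global} coprimality of $R,S$: because the leading monomial $y^n$ of $R$ and the lowest monomial $x^{n+3}$ of $S$ cannot simultaneously be leading terms for one monomial order, the monomial answer $V$ is not a flat (Gröbner) degeneration of the general configuration, so no confluence or semicontinuity shortcut applies. What does work is the resultant: since $\gcd(\hat R,\hat S)=1$, a relation $\hat R\hat B-\hat S u\in\langle s^{N+3},\dots,s^{n+2}\rangle$ with $\deg\hat B\le N+2$ and $\deg u\le N-1$ is forced to be trivial, because the associated Sylvester-type determinant in the coefficients of $R$ and $S$ is a nonzero polynomial that does not vanish under coprimality; this gives $q_{n+N+2}=0$. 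As orientation, in the model case $S=-x^{n+3}$ reduction mod $\hat S$ is plain truncation, $(\hat R-1)\pi_{\mathrm{low}}$ raises the order at $0$ at each step and is therefore nilpotent, so $T$ is manifestly invertible; the general case replaces this soft argument by the resultant computation.
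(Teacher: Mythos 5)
Your reduction of the lemma to a single transversality statement is correct and cleanly executed: coprime binary forms are a regular sequence, so the Hilbert series of $\mathbb{C}[x,y]/(R,S)$, with $S=\omega_{\mathcal{R}}(V_{n+2})$, is $(1-z^{n})(1-z^{n+3})/(1-z)^{2}$, its coefficient in degree $n+N+2$ is $\max(0,n-N)=\dim V$, and the lemma becomes exactly $V\cap I_{n+N+2}=0$, where $I_{n+N+2}=R\,\mathbb{C}[x,y]_{N+2}+S\,\mathbb{C}[x,y]_{N-1}$. (For the record, the paper states this lemma with no proof at all, so there is no argument of the authors to compare yours against; also, your dimension count in fact forces uniqueness of $A$ and $B$ as well when $N\le n$, not only of $q_{n+N+2}$.) The genuine gap is the transversality itself. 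The sentence asserting that the relevant ``Sylvester-type determinant \ldots does not vanish under coprimality'' is not a proof: that determinant is not the resultant, and to conclude you would need to show its zero locus is contained in the resultant locus. You correctly diagnose that no Gr\"obner/semicontinuity shortcut is available (the monomials $y^n$ and $x^{n+3}$ cannot both be made leading terms), but the proposed resultant argument is precisely the step that is missing.

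Worse, that inclusion --- hence the transversality, hence the lemma as stated --- is false, so no completion of your argument exists. Take $n=3$, $N=2$, and
\begin{equation*}
R=y^{3}-x^{2}y-x^{3},\qquad P_{5}=x^{4}y,\quad Q_{5}=x^{5},\qquad S=yP_{5}-xQ_{5}=x^{4}y^{2}-x^{6}=x^{4}(y-x)(y+x).
\end{equation*}
All normalizations hold: $R(0,y)=y^{3}$, $Q_{5}(x,0)=x^{5}$, and $S(1,t)=t^{2}-1$ already has degree $\le n-1$, so the preliminary normalization step leaves this pair unchanged; moreover $R(1,t)=t^{3}-t-1$ has three simple roots distinct from $0,1,\infty$, and $R$, $S$ are coprime. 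The pair is realized by the vector field $(xR+x^{4}y+y^{6})\partial_{x}+(yR+x^{5})\partial_{y}$, which lies in $\mathcal{D}(3)$ and satisfies every hypothesis of Theorem \ref{t:formalnormalforms}, including the Weierstrass condition on the $\partial_x$-component. Nevertheless
\begin{equation*}
x^{5}y^{2}=(-x^{4})\,R+(x+y)\,S,\qquad \deg(-x^{4})=N+2,\quad \deg(x+y)=N-1,
\end{equation*}
while here $V=\mathbb{C}\,x^{5}y^{2}$. Thus $V\subset I_{7}$: uniqueness fails for $H=x^{5}y^{2}$, which admits the two decompositions $(q,A,B)=(x^{5}y^{2},0,0)$ and $(q,A,B)=(0,-x^{4},x+y)$, and since by your own count $\dim(V+I_{7})=\dim I_{7}=7<8$, existence fails for some $H$ as well. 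So coprimality plus the stated normalizations simply do not imply the conclusion; your nilpotency argument for the model case $S=-x^{n+3}$ is sound and shows the statement holds for \emph{generic} pairs, but a correct proof would require identifying and using additional hypotheses that exclude examples like the one above. This is a defect of the paper's unproved lemma itself, not merely of your write-up.
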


Set $H=(\omega_{\radial}(V^{N}))_{n+N+2}$ and apply the previous lemma to obtain $A,B$ and $q_{n+N+2}$. Define $\gamma=B/N$. Then defining $\widetilde{V}^{\gamma}$ as before, we get $j^{n+N+2}(\widetilde{V}^{\gamma})=q_{n+N+2}+A R$ and $j^{n+N}(\widetilde{P}^{\gamma})=j^{n+N}(P)$.  Next choose a homogeneous vector field  $\phi_N$ of degree $N$ such that $\omega_{\radial}(\phi_{N})=-A/(N+1)$. After applying $\text{Id}+\phi_N$ to $\widetilde{V}^{\gamma}$ we obtain a vector field $W^{N+1}$ with normalized $$j^{n+N+2}(\omega_{\radial}(W^{N+1})).$$  It remains to normalize the homogeneous term $P$ of degree $n+N+1$ of the $\partial_{x}$-coordinate of $W^{N+1}$. We claim that $x$ divides $P$ and thus $P(1,t)$ has degree at most $n+N$. Indeed,  By construction $x$ divides $\omega_{\radial}(W^{N+1}))_{n+N+2}$ and $$W^{N+1}_{n+N+2}-\left(\frac{(\omega_{\radial}(W^{N+1}))_{n+N+2}}{x}\right)\partial_y\quad\text{ and }\quad\radial\quad\text{ are tangent vector fields.}$$ Therefore $x$ divides $P$. By using euclidean division in $\mathbb{C}[t]$ we can find a unique homogeneous polynomial $P_{N+1}(x,y)$  of degree $N+1$  such that $P_{N+1}(1,t)$ has degree at most $n-1$, and a homogeneous polynomial $C(x,y)$ of degree $N$ such that $P=P_{N+1}+xRC$. The vector field $V^{N+1}=(1-C)W^{N+1}$ is normalized up to order $n+N+1$ and still in the same equivalence class.


The formal vector field $V^{\infty}$ satisfying $j^N(V^{\infty})=j^N(V^N)$ for all $N\geq 1$ is also formally equivalent to $V^N$ and has all its homogenous terms normalized. Its dual form has the properties stated in the statement of Theorem \ref{t:formalnormalforms}. The uniqueness of the solutions for the coefficients in the Taylor series of $V_{\infty}$ at each step  of the normalization shows that the normal form is unique. Nevertheless, the formal equivalence between the vector field $V$ and its normal form $V_{\infty}$ is non-unique since the group of automorphisms of the foliation contains the exponential of $uV$ for any unit $u$. This appears in the normalization process as the lack of uniqueness for the coefficients in the normalizing map.

\subsection{The holonomy of an element in $\mathcal{D}$ and Proofs of Theorems \ref{t:invariants} and \ref{thm:invariantsinD}}
\label{s:holonomy}
The proof of the theorem \ref{t:invariants} relies on an analysis of the holonomy of the foliations in $\mathcal{D}$. This is what is done below.

Let us denote by $\widehat{I}^{t_i}_{r_i+1}$ the set of formal series in $(t-t_i)$ that are formally conjugated to the rotation of angle $2\pi/(r_i+1)$ around $t_i$, that is to say, series \[h(t)=e^{\frac{2\pi i}{r_i+1}}(t-t_i)+\sum_{j\geq 2} h_{ij}(t-t_i)^j\text{ such that }h^{r_i+1}=\textup{Id}.\]

Given a monic polynomial $r$ of degree $n$, $r(t)=(t-t_1)^{r_1}\cdots(t-t_k)^{r_k}$ where $t_i\neq t_j$ if $i\neq j$, and  $q(t,x)\in\mathbb{C}[[t,x]]$ such that $r(t)$ and $q(t,0)$ have no common roots, define the set of formal one-forms $$\widehat{\Omega}_{r,q}=\{ (r(t)+w(t,x))dt+q(t,x)\dd x: w\in x\mathbb{C}[[x]][t]\text{ satisfies }\deg _t w<n\}.$$ For each $\omega\in\widehat{\Omega}_{r,c}$ and $i=1,\ldots,k$ define $h_i\in \widehat{I}^{t_i}_{r_i+1}$ to be the formal series in $(t-t_i)$ satisfying $\hat{f}\circ h=\hat{f}$ for some local formal first integral $\hat{f}$ of $\omega$ around $t_i$. In the case of convergent $\omega$  it coincides with the series of the generator of $H(\mathcal{F}_{\omega},t_i)$ and in general we call it the formal holonomy of the formal foliation $\mathcal{F}_{\omega}$.
\begin{prop}\label{p:bijection}
  The map $\textup{hol}_{r,q}: \widehat{\Omega}_{r,q}\rightarrow \widehat{I}^{t_1}_{r_1+1}\times\cdots \times\widehat{I}^{t_k}_{r_k+1}$ defined by $$\textup{hol}_{r,q}(\omega)=(h_1,\ldots,h_k)$$ is a bijection.
\end{prop}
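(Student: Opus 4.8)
The plan is to prove bijectivity of $\textup{hol}_{r,q}$ by establishing both injectivity and surjectivity through a single order-by-order analysis of the coefficients of the perturbation term $w\in x\mathbb{C}[[x]][t]$. The key observation is that an element $\omega\in\widehat{\Omega}_{r,q}$ is determined by the data $w=\sum_{m\geq 1}w_m(t)x^m$ with $\deg_t w_m<n$, so the space $\widehat{\Omega}_{r,q}$ is parametrized by a countable family of coefficients, and likewise each target factor $\widehat{I}^{t_i}_{r_i+1}$ is an infinite-dimensional affine space of formal series constrained by $h_i^{r_i+1}=\textup{Id}$. First I would set up a filtration by the $x$-degree $m$ and show that the map $\textup{hol}_{r,q}$ respects this filtration in the sense that the coefficient of $x^m$ in $w$ is determined by, and determines, the $m$-th order jets of the holonomy germs $(h_1,\dots,h_k)$.

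The heart of the argument is a local computation near each tangency point $t_i$. Near $t_i$ the form $\omega$ has a formal first integral $\hat f_i$, and the holonomy $h_i$ is the unique formal germ fixing $\hat f_i$ that realizes the rotation by $2\pi/(r_i+1)$. The strategy is to write the first integral as a formal power series and extract how its coefficients depend linearly (at leading order) on the coefficients $w_m(t)$, restricted to a neighborhood of $t_i$. Because $r$ has a zero of order exactly $r_i$ at $t_i$ and $q(t_i,0)\neq 0$, the local structure is that of a foliation tangent to order $r_i$ to the fiber, so the holonomy is periodic of order $r_i+1$; the free parameters in $h_i$ beyond the fixed rotation multiplier correspond precisely to the Taylor coefficients of $w_m$ evaluated along the $y=t_i$ slice. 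I would then invoke a dimension-count matching: since $\deg_t w_m<n=\sum_i r_i$, by interpolation (Lagrange/Hermite at the points $t_1,\dots,t_k$ with multiplicities $r_1,\dots,r_k$) the polynomial $w_m(t)$ is uniquely recoverable from, and freely prescribable by, its Hermite data at the $t_i$, which matches exactly the degrees of freedom in the $m$-th order coefficients of $(h_1,\dots,h_k)$.

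Concretely, the recursion runs as follows. Assuming the components $w_1,\dots,w_{m-1}$ have been determined so that the holonomies agree up to order $m-1$, the equation expressing the prescribed $m$-th order jet of each $h_i$ becomes a linear system in the Hermite data of $w_m(t)$ at the points $t_i$, with the nonresonance coming from the rotation eigenvalue $e^{2\pi i/(r_i+1)}$ guaranteeing that the relevant linear operator is invertible on the space of polynomials of $t$-degree $<n$. This gives both existence (surjectivity) and uniqueness (injectivity) of $w_m$ at each stage, hence the bijectivity of $\textup{hol}_{r,q}$ by induction on $m$.

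The main obstacle I anticipate is the bookkeeping at the tangency points with $r_i>1$, that is, the case of non-simple points in the support of $T(\mathcal F)$, which is exactly where this statement goes beyond the simple-point analysis of \cite{orv}. When $r_i\geq 2$ the holonomy is periodic of higher order and the passage from the first integral to the holonomy germ involves the ramified chart $\psi^{r_i+1}$ described in the introduction; controlling how the higher Taylor coefficients of $w_m$ near $t_i$ feed into the periodic germ $h_i\in\widehat{I}^{t_i}_{r_i+1}$—and checking that the constraint $h_i^{r_i+1}=\textup{Id}$ cuts out precisely the right number of free parameters to match the $r_i$ Hermite conditions on $w_m$ at $t_i$—is the delicate point. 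I expect this to reduce to verifying that the linearized map at each order is an isomorphism between two spaces of equal (finite) dimension $n$, after which invertibility follows from injectivity by the nonresonance of the rotation multipliers.
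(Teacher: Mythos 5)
Your toolkit --- induction on the $x$-degree of $w$, first integrals, and Hermite interpolation at the $t_i$ with multiplicities $r_i$ summing to $n$ --- is the right one, and it is essentially the paper's. But the mechanism you propose for the inductive step contains a genuine gap: the filtration correspondence on which you build the whole argument is false. You claim that the coefficient $w_m$ of $x^m$ "is determined by, and determines, the $m$-th order jets of $(h_1,\dots,h_k)$", and that at each order the linearized map is "an isomorphism between two spaces of equal (finite) dimension $n$". Count parameters: $w_m$ has $n=\sum_i r_i$ free coefficients for every $m\geq 1$, whereas a germ $h_i$ subject to $h_i^{r_i+1}=\textup{Id}$ carries at most \emph{one} free coefficient per jet order (it is equivalent, bijectively, to a normalized first integral $f(t)=(t-t_i)^{r_i+1}+\sum_{j\geq r_i+2}f_j(t-t_i)^j$ with $f_j=0$ whenever $j\equiv 0\pmod{r_i+1}$), so at jet order $m$ the tuple $(h_1,\dots,h_k)$ has at most $k\leq n$ free parameters, and fewer at resonant orders. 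The two filtrations simply do not match level by level; the "linear system in the Hermite data of $w_m$" you want to extract from the prescribed $m$-th jets of the $h_i$ is not square, and no nonresonance of the multipliers $e^{2\pi i/(r_i+1)}$ can repair this --- the rotation eigenvalue plays no role in the actual invertibility.

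The correct triangular structure, which is what the paper's proof uses, is this: the holonomy data enters once and for all at $x$-order zero, not jet by jet. Encode each $h_i$ by its normalized first integral $f^i_0(t)$ as above (this also disposes of the constraint $h_i^{r_i+1}=\textup{Id}$, exactly the point your last paragraph leaves unresolved), and write the integrability identity $\omega\wedge\dd f^i\equiv 0$ for an unknown formal first integral $f^i=\sum_{l\geq 0}x^l f^i_l(t)$. Its coefficient on $x^0$ gives $r\,f^i_1=q(\cdot,0)\,(f^i_0)'$, whence $f^i_1(t_i)\neq 0$ because both sides vanish to order exactly $r_i$ at $t_i$. Its coefficient on $x^l$ has the form $(l+1)\,r\,f^i_{l+1}+w_l\,f^i_1+(\text{terms known by induction})=0$. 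Evaluating this identity and its derivatives of order $<r_i$ at $t_i$ kills the unknown $f^i_{l+1}$ (since $r$ vanishes there to order $r_i$) and, because $f^i_1(t_i)\neq 0$, determines the Hermite data $w_l(t_i),\dots,w_l^{(r_i-1)}(t_i)$ triangularly in terms of known quantities. Summing over $i$ gives exactly $n$ conditions: they determine the degree $<n$ polynomial $w_l$ uniquely (injectivity) and can always be realized by Hermite interpolation (surjectivity); one then recovers $f^i_{l+1}$ by dividing by $r$ and continues. So the $n$ conditions at each step are solvability conditions for the \emph{next coefficient of the first integral}, with the full holonomy fixed beforehand --- not prescriptions of the next jet of the holonomy. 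Without this reorganization, your induction has no well-posed step to run.
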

\begin{pf}
  First observe that for each $h\in \widehat{I}^{t_0}_{r}$ there exists a unique formal first integral, that is a formal series $f(t)$ such that $f\circ h=f$ of the form  \begin{equation} f(t)=(t-t_0)^r+\sum_{j\geq r+1}f_j(t-t_0)^j \text{ such that } f_{j}=0\text{ for all }j\equiv 0\quad(\text{mod } r).\end{equation} that we will call \emph{normalized}. Given $(h_1,\ldots,h_k) \in \widehat{I}^{t_1}_{r_1+1}\times\cdots \times\widehat{I}^{t_k}_{r_k+1}$ we want to find a unique $\omega\in\widehat{\Omega}_{r,q}$ satisfying $\mbox{hol}_{r,q}(\omega)=(h_1,\ldots,h_k)$. If such $\omega$ exists, we can consider $k$ formal local first integrals $f^j=\sum x^if^j_i(t)\in \mathbb{C}[[x,t-t_j]]$ for the foliation near $t=t_j$ such that $f^j_0(t)$ is normalized as defined above. Below, we are going to construct inductively both $\omega$ and its $k$ formal first integrals. By definition, around the point $t_j$ we have
  \begin{equation}\label{eq:normalfirstintegral}
    0\equiv \omega\wedge df^j=\big(r(t)+w(t,x)\big) \big(\sum_{i\geq 1} if^j_i(t)x^i\big)-q(t,x)\big(\sum_{i\geq 0} \frac{df^j_i}{dt}(t)x^i\big)
  \end{equation}
    When evaluating (\ref{eq:normalfirstintegral}) on $x=0$, we get $r(t)f^j_1(t)=q(t,0)\frac{df^j_0}{dt}(t)$. Since $r(t)$ and $\frac{df^j_0}{dt}(t)$ have the same order at $t=t_j$, $f^j_1$ is a power series satisfying \[f^j_1(t_j)\neq 0.\]
    The coefficient on $x$ of (\ref{eq:normalfirstintegral}) is \begin{equation}\label{eq:xcoef first integral}
      0\equiv r(t)2f_2^j(t)+w_1(t)f^j_1(t)-\ldots\end{equation} where the dots refer to terms depending only on $f_0,f_1$ and $q$. Now since $(t-t_j)^{r_j}$ divides $r$ and $f^j_1(t_j)\neq 0$, the values of $w_1(t_j),\ldots,(w_1)^{(r_j-1)}(t_j)$ do not depend on $f_2$, but only on $f_0,f_1$ and $q$. If we add up all the conditions, we have to find a polynomial of degree at most $n-1$ determined by $n=\sum r_j$ conditions, so there is a unique possibility for $w_1$. With $w_1$ at hand, we can define $f^j_2$ for all $j$ by using (\ref{eq:xcoef first integral}). By induction, suppose we know $\{f^j_1,\ldots,f^j_l\}$ and $\{w_1,\ldots,w_{l-1}\}$, let us find $f^j_{l+1}$ for all $j$ and $w_l$. The coefficient on $x^l$ of (\ref{eq:normalfirstintegral}) is given by
      \begin{equation}\label{eq:xLcoef first integral}
        0\equiv r(l+1)f^j_{l+1}+w_lf_1^j+\ldots
      \end{equation}
    where the dots stands for an expression depending only on known functions.  We get $n$ conditions on $w_l$ that define it uniquely and independently of $f^j_{l+1}$ by considering equation (\ref{eq:xLcoef first integral}) and its derivatives up to order $r_j$ evaluated on $t_j$. With $w_l$ at hand, we define $f^j_{l+1}$ for all $j$ by using (\ref{eq:xLcoef first integral}), thus proving the induction step.
\end{pf}

\begin{rem}
By using the previous results, we can provide some more examples of universal equisingular unfoldings of elements in $\mathcal{D}$ as it has been done in theorem \ref{t:unfoldings}. Indeed, if $r(t)$ is a polynomial of degree $n$, the universal unfolding of the germ of foliation $\mathcal{F}\in\mathcal{D}(n)$ defined by
\begin{equation}\label{martineklu}
\nu=x^{n+2}\dd \left(r\left(\frac{y}{x}\right)+x\right)=0
\end{equation} is given by pull-back of $\nu$ via the rational map $$\Lambda(x,y,(c_{ij}))=(x,y)\cdot\left(1+\sum_{j=1, i<j}^{n-1} c_{ij}x^{i-j}y^{j}\right)$$
where $(c_{ij})\in\mathbb{C}^{n(n-1)/2}$. Remark that for each fixed non-zero parameter $(c_{ij})$, $\Lambda$ is an automorphism of each leaf of the radial foliation $x\partial_x+y\partial_y$ except for the leaf $x=0$, where $\Lambda$ has its indeterminacy set. To see that the resulting unfolding is equisingular and non-trivial in any direction in the parameter space, it suffices to remark that after blowing up via $y=tx$, the underlying deformation is written in dual form as $$\left( r(t)+\sum_{j=1, i<j}^{n-1} jc_{ij}x^{i+1}t^{j-1}\right)\dd t+\left(1+\sum_{j=1, i<j}^{n-1} (i+1)c_{ij}x^{i}t^{j-1}\right)\dd x.$$
 Hence for any fixed parameter we have already one of the  normal forms given in Theorem \ref{t:formalnormalforms}. By uniqueness of the normal form, any two elements in this family belong to different classes of  $E$-equivalence. Actually, the so constructed unfolding turns out to be the \emph{universal} unfolding of $\nu$ since the dimension of its space of unfoldings is $n(n-1)/2$. Notice that unfortunately, such a procedure fails when the initial form $\nu$ is in normal form and has some other non-zero terms.
\end{rem}

As a corollary of the above proposition, we obtain the
\begin{pfof}{Theorem \ref{t:invariants}} Consider two foliations $\mathcal{F}_1$ and $\mathcal{F}_2$ as in the statement of the theorem.
Their associated formal normal forms
\begin{eqnarray*}
\mathcal{F}_1\hat{\sim}_E \underbrace{(R(x,y)+\cdots)}_{W_1}(x\dd y-y\dd x)+Q_1x^3\dd x\\
\mathcal{F}_2\hat{\sim}_E \underbrace{(R(x,y)+\cdots)}_{W_2}(x\dd y-y\dd x)+Q_2x^3\dd x
\end{eqnarray*}

Since ${\bf c}[\mathcal{F}_1](p_0,p_1,p_\infty)={\bf c}[\mathcal{F}_2](p_0,p_1,p_\infty)$, they
share the same second factor, $Q_1=Q_2$. Denote $r=R(1,t)$ and $q=Q_1(1,t)$.
Since the map $\textup{hol}_{r,q}$ is a bijection by Proposition \ref{p:bijection}, the equality $H(\mathcal{F}_1)=H(\mathcal{F}_2)$ ensures that, in the normal forms, the terms $W_1$ and $W_2$ are also equal. Therefore, the foliations have the same normal forms and are formally equivalent. Now, following \cite{C}, they are also analytically equivalent.
\end{pfof}

Coming back to the contents of Proposition \ref{p:bijection}, we do not know whether the preimage of a holomorphic $k$-uple $(h_1,\ldots,h_k)$ by the map $\textup{hol}_{r,c}$ is convergent in full generality. However in some particular cases it is true:
\begin{prop}\label{p:convergence of hol}
  Let $n\geq 1$, $r(t)=t^n$ and $c(t)=t-1$. The restriction of $\textup{hol}_{r,c}$ to the space $\Omega_{r,c}$ of convergent elements in $\widehat{\Omega}_{r,c}$ defines a bijection onto the space $I^{0}_{n+1}$of convergent germs in $\widehat{I}^{0}_{n+1}$.
\end{prop}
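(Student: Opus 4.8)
The statement has two halves. The inclusion $\textup{hol}_{r,c}(\Omega_{r,c})\subseteq I^0_{n+1}$ is the easy one: a convergent $\omega=(t^n+w(t,x))\dd t+(t-1)\dd x\in\Omega_{r,c}$ defines a genuine holomorphic foliation in a neighbourhood of $(t,x)=(0,0)$, regular there and having an isolated tangency of order $n$ with $\{x=0\}$ at the origin, so its holonomy around $t=0$ is a convergent germ and lies in $I^0_{n+1}$. The content is the reverse inclusion: every convergent $h\in I^0_{n+1}$ is realized by a convergent $\omega$. Since $\textup{hol}_{r,c}$ is a formal bijection (Proposition \ref{p:bijection}), it suffices to show that the unique formal $\omega=\textup{hol}_{r,c}^{-1}(h)$ converges whenever $h$ does; injectivity of the restriction is then inherited from the formal statement, and the surjectivity onto $I^0_{n+1}$ follows from the two inclusions.

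First I would make the invariant explicit. A convergent periodic germ $h$ of order $n+1$ is analytically linearizable: the average $\psi=\frac{1}{n+1}\sum_{i=0}^{n}\theta^{-i}\circ h^{\circ i}$, with $\theta(t)=e^{2\pi i/(n+1)}t$, is a convergent germ tangent to the identity satisfying $\psi\circ h=\theta\circ\psi$. Hence $f_0:=\psi^{n+1}$ is a convergent first integral of $h$ which, after scaling, is exactly the normalized invariant used in Proposition \ref{p:bijection}. Writing the sought first integral as $f=\sum_{i\ge0}f_i(t)x^i$ and $w=\sum_{i\ge1}w_i(t)x^i$ with $w_i\in\C[t]$, $\deg w_i<n$, the integrability condition $\omega\wedge\dd f=0$, i.e. $(t^n+w)\partial_x f=(t-1)\partial_t f$, expands coefficientwise in $x$ into the recursion
\[
t^n(l+1)f_{l+1}+w_l f_1=(t-1)f_l'-\sum_{i=1}^{l-1}(l+1-i)\,w_i\,f_{l+1-i}=:\Phi_l,
\]
with base step $f_1=(t-1)f_0'/t^n$ (holomorphic since $f_0'$ vanishes to order $n$ at $0$, and $f_1(0)=-(n+1)\neq0$). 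At each level $\Phi_l$ depends only on previously computed data; $w_l$ is the unique polynomial of degree $<n$ for which $\Phi_l-w_lf_1$ vanishes to order $n$ at $t=0$ (equivalently, the truncation to degree $<n$ of the holomorphic germ $\Phi_l/f_1$), after which $f_{l+1}=(\Phi_l-w_lf_1)/((l+1)t^n)$. This is precisely the recursion of Proposition \ref{p:bijection} specialized to $r=t^n$, $c=t-1$.

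The heart of the matter is convergence of $\sum f_i(t)x^i$ and $\sum w_i(t)x^i$, and here the special shape of $(r,c)$ is decisive: the root $t=0$ of $r$ is disjoint from the root $t=1$ of $c$, so on a fixed small disc $\{|t|\le\rho\}$ the factor $t-1$ is a unit and $f_1$ is bounded away from $0$, while the only singular operation — division by $t^n$ after the first $n$ Taylor coefficients have been killed by the choice of $w_l$ — costs only the fixed geometric factor $\rho^{-n}$ by the maximum principle, with no accumulating small divisors. The plan is to make this precise by a method of majorants on the Banach space of functions holomorphic on $\{|t|\le\rho\}$: the truncation operator $\Phi\mapsto w_l$ is bounded there by Cauchy estimates, so the passage $\Phi_l\mapsto(f_{l+1},w_l)$ is bounded by a single constant $C=C(\rho,n)$ independent of $l$, with an extra gain $1/(l+1)$. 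Replacing the coefficient sequences by a scalar majorant reduces the convolution recursion to an explicit analytic functional equation whose solution converges, yielding geometric bounds $\|f_i\|_\rho,\|w_i\|_\rho\le M A^i$ and hence convergence of $\omega$ on a bidisc.

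I expect the construction and verification of this majorant to be the main obstacle: it is exactly the step that would fail for general $(r,c)$, when the roots of $r$ and $c$ may collide or $r$ has several roots carrying their own small-divisor behaviour, which is why convergence is only asserted in this case. Once convergence is established, the uniqueness in Proposition \ref{p:bijection} identifies the resulting convergent $\omega\in\Omega_{r,c}$ with $\textup{hol}_{r,c}^{-1}(h)$, completing the reverse inclusion and hence the claimed bijection onto $I^0_{n+1}$.
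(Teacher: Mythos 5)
Your reduction of the statement to ``the formal inverse of $\textup{hol}_{r,c}$ preserves convergence'' is legitimate, and your recursion for $(f_{l},w_{l})$ is exactly the one underlying Proposition \ref{p:bijection}. But the proof has a genuine gap precisely where the proposition has its entire content: the convergence estimate is never carried out, and the sketch you give of it rests on a claim that is false as stated. You work on the Banach space of functions holomorphic on a \emph{fixed} disc $\{|t|\le\rho\}$ and assert that the passage from one level to the next is bounded by a single constant $C(\rho,n)$ with a gain $1/(l+1)$. The step $\Phi_l\mapsto (f_{l+1},w_l)$ (truncation, division by $f_1$ and by $t^n$) is indeed bounded there, but the dangerous step is $f_l\mapsto\Phi_l$, which contains the derivative $(t-1)f_l'$ and is an \emph{unbounded} operator on that fixed space. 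The standard cure (Cauchy estimates on shrinking discs, Nagumo norms) uses the $1/(l+1)$ gain to absorb the factor $l$ coming from differentiation; here, however, $w_l$ is extracted from $\Phi_l$ \emph{without} any such gain, so the obvious sup-norm bookkeeping gives $\|w_l\|\sim l\,A^{l-1}$, and re-injecting these $w_j$ into the convolution $\sum_j(l+1-j)w_jf_{l+1-j}$ compounds polynomial factors in $l$ at every step: the induction with two geometric sequences does not close (one side of the scheme forces the base for $w$ to exceed the base for $f$, the other side forces the opposite). A repair must exploit that $w_l$ depends only on the $n$-jet of $\Phi_l$ at $t=0$ — where differentiation costs only bounded factors — i.e.\ run the majorant at the level of double Taylor coefficients, and even then the coupled $(f,w)$ estimate is delicate because the problem is a \emph{characteristic} Cauchy problem (the coefficient $t^n+w$ of $\partial_x$ vanishes at the origin), outside the scope of Cauchy--Kowalevski. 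None of this appears in your text; you explicitly defer ``the construction and verification of this majorant'' as an expected obstacle. Since the formal bijection is already Proposition \ref{p:bijection}, this missing estimate \emph{is} the proposition. (A smaller slip: $\psi^{n+1}$ is not the normalized first integral ``after scaling'' — normalization also requires killing the coefficients of $t^{(n+1)k}$, $k\ge2$ — though this is harmless, since the recursion run with any primitive convergent first integral returns the same $\omega$ by formal injectivity.)

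For comparison, the paper proves convergence with no estimates at all, by a geometric surgery: the model $\omega_0=t^n\dd t+(t-1)\dd x$ is extended to a bifoliated neighbourhood of two crossing $(-1)$-curves; the gluing over an annulus is then twisted by the convergent conjugacy $\varphi$ between $h$ and the model holonomy; Lemma \ref{l:U-1-1} (Castelnuovo contraction plus Savelev's rigidity theorem) identifies the resulting neighbourhood with the standard $U_{-1,-1}$; contracting and choosing adapted coordinates exhibits a convergent element of $\Omega_{r,c}$ whose holonomy is $h$. There the analytic depth is hidden in the rigidity of neighbourhoods of rational curves, and convergence is automatic because the construction never leaves the holomorphic category. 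If you wish to keep your direct analytic route, you must actually produce the majorant, facing the $w_l$ issue above; as written, the argument is a plan, not a proof.
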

\begin{pf}
  Let $h\in I^{0}_{n+1}$ be given. First consider $\omega_0=t^n\dd t+c(t)\dd x\in\Omega_{r,c}$. Since all germs at $0$ of order $n+1$ are locally conjugated there exists a local diffeomeorphism $\varphi\in\text{Diff}(\mathbb{C},0)$ such that $h=\varphi^{-1}\circ\text{hol}(\omega_0)\circ\varphi$. Next remark that $\omega_0$ can be extended to a foliation around the $(-1)$-curve $E_1=\{x=0\}$. On the chart $(y,u)$ defined by $t=1/u$ and $x=yu$ the foliation is defined by
  \begin{equation}\label{superform}
  \eta_0=(-1+uy\tilde{c}(u))du+(u^2\tilde{c}(u))\dd y
  \end{equation}
   where $\tilde{c}(u)=u^{n+1}c(1/u)$ is a polynomial.
  This foliation is in its turn defined around the curve $E_2=\{u=0\}$ and it can also be extended to a foliation around a $(-1)$-curve containing $u=0$. Indeed, it suffices to remark that $\eta_0$ is the blow up at the regular point of $-du+u\tilde{c}(u)dv$ by the map $v=yu$. On the other hand the radial foliation around $E_1$ defined by $dt$ and $du$ on respective charts extends also to the neighbourhood of the  $(-1)$-curve $E_2$. In this way we obtain a pair $(\mathcal{F}_0,\mathcal{G}_0)$ defined around $E_1\cup E_2$. The tangency locus between both foliations is invariant by $\mathcal{F}_0$ (and also by $\mathcal{G}_0$).

Remark that the change of coordinates between $\omega_0$ and $\eta_0$ can be described geometrically as follows. It is the only equivalence that acts like $(0,t)\mapsto(1/t,0)=(u,y) $ on a small annulus $\{0<|t|<r\}$ and sends the foliation pair $(\omega_0,dt)$ to the foliation pair $(\eta_0,du)$. If we choose a different gluing on the divisor, say $(0,t)\mapsto (1/\varphi(t),0)$, we can still extend to an equivalence defined on  a neighbourhood of the annulus by imposing that the pair  $(\omega_0,dt)$ is sent to the pair $(\eta_0,du)$. In this way we obtain a pair of foliations $(\mathcal{F},\mathcal{G})$ around a union of two $(-1)$-curves that we still call $E_1$ and $E_2$ satisfying that $\text{Tang}(\mathcal{F},\mathcal{G})$ is invariant by $\mathcal{F}$ and intersects $E_1$ transversally at two different points $q_1=E_2\cap E_1$ and $q_2\in E_1$. The divisor $\text{Tang}(\mathcal{F},E_1)$ has a unique point $p$ in its support. By construction, in the unique coordinate $w:E_1\rightarrow\mathbb{C}P^1$ for which $w(q_1)=0$, $w(q_2)=1$ and $w(p)=\infty$ two points $w_1,w_2\in \{w\in E_1:|w|>>1\}$ belong to the same leaf of $\mathcal{F}$ if and only if they belong to the same orbit of $1/h(1/w)$. In other words, the holonomy  $H(\mathcal{F})$ measured in the coordinate $s=1/w$ is precisely the series $h(s)$. Using the lemma \ref{l:U-1-1}, we get a pair of foliations $(\mathcal{F}_1,\mathcal{G}_1)\simeq (\mathcal{F},\mathcal{G})$ defined on a neighborhood of $\{0\}\times \mathbb{CP}^1$ in $\mathbb{D}\times \mathbb{CP}^1$. Let $(z,s)$ be some coordinates where the foliation $\mathcal{G}_1$ is defined by $ds$. In these coordinates $\mathcal{F}_1$ is defined by a holomorphic $1$-form $u(z)(s^n+b_{n-1}(z)s^{n-1}+\cdots+b_0(z))ds+(s-1)dz$ where $u$ is a unit. After changing the $z$-coordinate by $x\in(\mathbb{C},0)$ satisfying $\dd x=\frac{dz}{u(z)}$, the foliation $\mathcal{F}_1$ is defined by a holomorphic $1$-form \[\omega_1=(s^n+a_{n-1}(x)s^{n-1}+\cdots+a_0(x))ds+(s-1)\dd x\] for some $a_0,\ldots,a_{n-1}\in x\mathbb{C}\{x\}$. Blowing-up the point $(0,\infty)\in\mathbb{D}\times\mathbb{CP}^1$ does not change the expression of the normal form $\omega_1$ whose holonomy is by construction the holonomy of $\mathcal{F}$, that is to say $h$.
\end{pf}

Finally, we are now able to prove the theorem \ref{thm:invariantsinD}.

\begin{pfof}{Theorem \ref{thm:invariantsinD}}
To prove that $\mathfrak{I}$ is onto, let $D=r_1p_1+\ldots+r_kp_k\in\text{div} E(n)$ and $([D_1],(h_1,\ldots,h_k))\in{\bf E}_D$ be given. Consider an affine coordinate $s\in\mathbb{C}$ in $E$ containing all the points in the support of $D$ and $D_1$. These divisors can be respectively represented by monic polynomials $r(s)$ and $q(s)$ and the $1$-form $r(s)ds+q(s)\dd x$ defines an element such that
\begin{eqnarray*}
T(\mathcal{F}_0)&=&D\\
\mbox{div}(\F_0)&=&[D_1]
\end{eqnarray*}
We are going to do a surgery on $\F_0$ which preserves the above relations but modify the holonomies as expected. Consider the pair $(\mathcal{F}_0,\mathcal{G})$ where $\mathcal{G}$ is the radial foliation defined by $ds=0$ in the given coordinates. On a small neighbourhood $U_i$ of $p_i\in E$, following the lemma \ref{ptitlemme}, there exists a local biholomorphism $\phi_i$ sending the pair $(\mathcal{F}_0,\mathcal{G})_{|U_i}$ to the germ at $(0,0)$ of the pair $(\omega_i,dt)$ where $\omega_i=t^{r_i}dt+(t-1)\dd x$. Define $\varphi_i=\phi_{i|E\cap U_i}$. To keep track of the divisors $D$ and $D_1$ and change the holonomy we do not touch the gluing on $E\cap U_i$ but we change the model foliation $\omega_i$ by an appropriate holomorphic model $\widetilde{\omega}_i\in \Omega_{r=t^{r_i},(t-1)}$ obtained from Proposition \ref{p:convergence of hol} that satisfies $$h_i(s)=\varphi^{-1}_i\circ\text{hol}_{r,t-1}(\widetilde{\omega}_i)\circ \varphi_i(s).$$
Now, there exists a unique extension of $\varphi_i$ to a saturated neighbourhood $V_i$ of an annulus $A_i\subset U_i\cap E$ that sends the pair $(\mathcal{F},\mathcal{G})_{|V_i}$ to the pair of germs of foliations at $(0,0)$ defined by $(\widetilde{\omega}_i,dt)$. After doing this at each point $p_i$, we obtain a pair of regular foliations $(\mathcal{F},\mathcal{G})$ in a neighbourhood of a $(-1)$-curve. The contraction of this curve produces a foliation $\mathcal{F}\in\mathcal{D}^R$, $\mathcal{G}\in\mathcal{D}(0)$, $T(\mathcal{F})=D$, $\textup{div}(\mathcal{F})=[D_1]$ and $H(\mathcal{F})=(h_1,\ldots,h_k)$. Thus  the map $\mathfrak{I}$ is onto.

It remains to prove that for each $D=r_1p_1+\cdots+r_kp_k\in\text{div}(E)(n)$ and $${\bf e}=([D_1],(h_1,\ldots,h_k))\in{\bf E}_D $$ the fibre $\mathfrak{I}^{-1}({\bf e})$ is biholomorphic to $\mathbb{C}^M$ where $M$ is as in the statement of the theorem. Take coordinates $(x,y)$ of $(\mathbb{C}^2,0)$ such that $x=0$ does not define a point in the support of $D$. By Theorem \ref{t:formalnormalforms} we can assign to each class in $\mathfrak{I}^{-1}({\bf e})$ a unique formal $1$-form $W(x,y)(x\dd y-y\dd x)+Q(x,y)\dd x$
The homogeneous part of degree $n+2$ of $Q$ depends only on $[D_1]$ and thus is invariant in $\mathfrak{I}^{-1}({\bf e})$. Now, the polynomial $Q-Q_{n+2}$ is written
\[\sum_{\tiny \begin{array}{c}0\leq i\leq n-2 \\0\leq j\leq n-1 \\ i+j\geq n\end{array}}c_{ij}x^{i+3}y^j\]
and thus defines a point $(c_{ij})$ in $\mathbb{C}^M$. This defines a map from $\mathfrak{I}^{-1}({\bf e})$ to $\mathbb{C}^M$. Proposition \ref{p:bijection} ensures that this map is injective.

Let us prove that it is also onto.

Let $(c_{ij})$ in $\mathbb{C}^M$ be given. By construction, the family of complex numbers $(c_{ij})$, the class of divisor $[D_1]$ and the $k$-uple of holonomies  $h=(h_1,\ldots,h_k)$ is,associated to a formal normal form of the theorem \ref{t:formalnormalforms}. However, we are looking for a convergent foliation. The idea is basically the same as in the proposition \ref{p:convergence of hol}. We are going to do a \emph{very tangent} surgery on a polynomial form that is very tangent to the formal normal form. Indeed, using the construction of theorem \ref{t:formalnormalforms} it is easy to see that for any $N>>1$ there exists a foliation $\mathcal{F}_0$ defined by a \emph{polynomial} normal form $\omega_0=W_0(x,y)(x\dd y-y\dd x)+Q(x,y)\dd x$ satisfying that $$j^{N}(H(\mathcal{F}_{0}))=j^N(h).$$
After the blow-up $y=tx$, we consider the radial foliation $\mathcal{G}_0$ defined by $dt=0$. On a neighbourhood $U_i$ of $p_i\in|D|$, there exists a unique biholomorphism onto a neighbourhood of $(0,0)$ in coordinates $(z,t)$ sending $(0,t)$ to $(0,t)$ and the pair $(\mathcal{F},\mathcal{G})_{|U_i}$ to the pair $(\omega^0_i,dt)$ where
$\omega^0_i=(t^{r_i}+a^0_{r_i-1}(z)t^{r_i-1}+\cdots+a^0_{0}(z))dt+(t-1)dz$ is the convergent $1$-form of Proposition \ref{p:convergence of hol} having the same holonomy $h^0_i$ as $\mathcal{F}_0$ at $p_i$. The convergent $1$-form $\omega_i=(t^{r_i}+a_{r_i-1}(z)t^{r_i-1}+\cdots+a_{0}(z))dt+(t-1)dz$
for which $H(\mathcal{F}_{\omega_i})=h_i$ satisfies that $$\omega_i-\omega^0_i=z^{N'}f(z,t)dt$$ for some big $N'\in\mathbb{N}$. In particular this means that for each $0<\varepsilon_i<|t_0|<r_i$ for some small $\varepsilon_i$ and $r_i$, the holonomy germs $g^0_i,g_i:(\{t=t_0\},0)\rightarrow (E,t_0)$ of $\omega^0_i$ and $\omega_i$ respectively are very tangent. In particular, $g_i^{-1}\circ g^0_i(z)-z$ has a zero of order $N''$ that is as big as necessary. The equivalence between $\mathcal{F}_{\omega^0_i}$ and $\mathcal{F}_{\omega_i}$ in a neighbourhood of the annulus can be written as \begin{equation}(z,t)\mapsto(z+z^{N^{(3)}}(\cdots),t).\label{eq:cocycleorderN'}\end{equation}
for a $N^{(3)}$ as big as necessary. We claim that the gluing of $\mathcal{F}_{0|(\widetilde{\mathbb{C}^2},E)\setminus U_1\cup\cdots\cup U_k}$ with the models $\mathcal{F}_{\omega_i}$ via the given gluing (\ref{eq:cocycleorderN'}) defines an element in $\mathcal{F}\in \mathfrak{I}^{-1}({\bf e})$ with the desired invariants. Indeed, after the gluing, we get a pair $(\mathcal{F},\mathcal{G})$ of foliations around a rational curve of self-intersection $-1$. The proof of the classical version of the Grauert's theorem on the rigidity of neighborhood of rational $-1$-curve can be refined to ensure the following property: let $\textup{Diff}_{N}(E)$ be the sheaf over $E$ of germs of automorphisms that are locally written
$(z,t)\mapsto (z+z^N(\cdots),t)$ where $z=0$ is a local equation of $E$ and $\dd t=0$ is the radial foliation $\mathcal{G}$. Then,
  \[H^1\left(E,\textup{Diff}_{N}(E)\right)=0.\]
  This implies in particular that one can choose a holomorphic 1-form $\omega$ representing $\mathcal{F}$ as tangent as necessary to $\omega_0$. In particular, one can choose $N$ such that $\omega$ and $\omega_0$ coincides up to degree $2n$. Therefore, $\F$ has the desired invariants $(c_{ij})\in\mathbb{C}^M$ and the map from $\mathfrak{I}^{-1}({\bf e})$ to $\mathbb{C}^M$ is onto.
\end{pfof}

\subsection{Unfoldings versus deformations in $\mathcal{D}$.}
\label{s:deformation vs unfolding}
In this section, we give some independent results which compare the unfoldings and the deformations of a foliation in $\mathcal{D}$. In general, it is very difficult to give a criterion that recognizes deformations that underlie an unfolding. Indeed, a deformation of some one form $\omega_0$ in two variables $(x,y)$ with one parameter $t$ is written
\[\omega_t=\omega_0+t\left(a(x,y,t)\dd x+b(x,y,t)\dd y\right).\]
It underlies an unfolding if and only if there exists $c(x,y,t)$ such that
\[\omega_t+c(x,y,t)\dd t\] is integrable. The associated partial differential equation on $c$ is hardly solvable. In general, apart from this definition, no simple criteria are known to decide whether a deformation forms an unfolding or not. However, for the class $\mathcal{D}$, it can be read on the variation of the holonomy.

\begin{prop}
\label{p:deformationvsunfolding}
  Let $\{\mathcal{F}_c\}_{c\in (\mathbb{C}^k,0)}\subset\mathcal{D}$ be an analytic germ of deformation of $\mathcal{F}_0$ satisfying that $H(\mathcal{F}_c)=H(\mathcal{F}_0)$ for all $c\in(\mathbb{C}^k,0)$. Then the given deformation underlies  an equisingular unfolding of $\mathcal{F}_0$ on $(\mathbb{C}^{2+k},0)$.
\end{prop}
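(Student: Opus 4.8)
The plan is to build the unfolding geometrically, by producing a regular codimension-one foliation on a simultaneous blow-up of the total space and then contracting. Write the deformation as a holomorphic family of one-forms $\omega_c$ representing $\F_c$ and read it as a foliation on $(\C^2,0)\times(\C^k,0)=(\C^{2+k},0)$. First I would blow up the parameter axis $\Sigma=\{x=y=0\}\cong(\C^k,0)$, obtaining $\pi\colon M\to(\C^{2+k},0)$ with exceptional divisor $\mathcal{E}$; since $\Sigma$ is a smooth center with trivial normal bundle, $\mathcal{E}$ is the trivial bundle $\mathbb{P}^1\times(\C^k,0)$, whose slice over $c$ is the exceptional divisor $E_c$ of the ordinary blow-up of $\F_c$. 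As each $\F_c\in\D$, the lifts $\widetilde{\F_c}$ are regular and generically transverse to $E_c$. The goal is to construct a regular codimension-one foliation $\widetilde{\F}$ near $\mathcal{E}$, generically transverse to $\mathcal{E}$ and restricting to $\widetilde{\F_c}$ on each slice $M_c$. Contracting $\mathcal{E}$ fibrewise (a family version of Castelnuovo's criterion, each $E_c$ being a $(-1)$-curve in $M_c$) then yields an honest foliation on $(\C^{2+k},0)$ restricting to $\F_c$ on slices; being a genuine foliation it is automatically integrable, and since $H(\F_c)\equiv H(\F_0)$ forces $T(\F_c)$ to be constant, the resulting unfolding is equisingular.

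The hypothesis that the holonomy is \emph{literally} constant (measured in one fixed coordinate of $E$) is what allows the local construction of $\widetilde{\F}$. The support $|T(\F_c)|=\{t_1,\dots,t_k\}$ is then independent of $c$, so the tangency points give constant sections $c\mapsto(t_i,c)$ of $\mathcal{E}$. Over the transversality locus, on a neighbourhood $W_0$ of $\mathcal{E}\setminus\bigcup_i(\{t_i\}\times(\C^k,0))$, each $\widetilde{\F_c}$ is transverse to $E_c$, so sending a point to the point of $E$ reached by the leaf of $\widetilde{\F_{c}}$ through it defines, locally, a holomorphic submersion; I take $\widetilde{\F}|_{W_0}$ to be the foliation by its fibres, whose $(k+1)$-dimensional leaves meet each slice in a leaf of $\widetilde{\F_c}$. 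Near a tangency point $t_i$, on a neighbourhood $W_i$ of the section $\{t_i\}\times(\C^k,0)$, I use that the germ of the pair $(\widetilde{\F_c},E_c)$ at $t_i$, taken up to biholomorphisms fixing $E_c$ pointwise, is governed by the holonomy germ $H(\F_c,t_i)=h_i$, which is the local content of Proposition~\ref{p:convergence of hol} together with the normalisation of Lemma~\ref{ptitlemme}. As $h_i$ is independent of $c$, all these germs are conjugate to the fixed convergent model $\widetilde{\omega}_i\in\Omega_{r=t^{r_i},(t-1)}$ realising $h_i$, by conjugacies depending holomorphically on $c$; pulling back the trivial product-in-$c$ foliation attached to $\widetilde{\omega}_i$ gives an integrable local model $\widetilde{\F}|_{W_i}$ which again restricts to $\widetilde{\F_c}$ on slices.

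It then remains to glue these pieces. On each overlap $W_0\cap W_i$, an annular neighbourhood around $t_i$, the two descriptions both restrict to $\widetilde{\F_c}$ slicewise and hence differ by an automorphism fixing $\mathcal{E}$ and preserving the radial direction $\dd t=0$; because the holonomies agree exactly, these automorphisms are tangent to high order along $\mathcal{E}$ and holomorphic in $c$. They form a \v{C}ech $1$-cocycle for the covering $\{W_0,W_i\}$ of $\mathcal{E}$ with values in the sheaf $\textup{Diff}_N(E)$ appearing in the proof of Theorem~\ref{thm:invariantsinD}, now taken over the base $(\C^k,0)$. The refined Grauert rigidity used there, in its relative form, yields
\[
H^1\!\left(\mathcal{E},\textup{Diff}_N(E)\right)=0,
\]
so the cocycle is a coboundary and the local models can be simultaneously adjusted to agree on overlaps, producing the global regular foliation $\widetilde{\F}$; contracting $\mathcal{E}$ finishes the argument.

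I expect this last gluing step to be the main obstacle: assembling slicewise-prescribed foliations into a single integrable codimension-one foliation is exactly where a cohomological obstruction can occur, and it is the constant-holonomy hypothesis that forces the transition data into a sheaf with vanishing $H^1$, so that the obstruction disappears. The reduction at the tangency points to a $c$-independent model is the other technical ingredient, but it is a parametrised rereading of Proposition~\ref{p:convergence of hol}, while away from those points the transversality of $\widetilde{\F_c}$ to $E_c$ makes the construction essentially tautological.
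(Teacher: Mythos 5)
Your overall skeleton --- trivialize the deformation near the exceptional divisor away from the tangency points, use $c$-independent local models at the tangency points, glue, and contract --- is viable, and it runs parallel to the paper's own proof, which covers $E$ by two $\F_0$-saturated open sets and trivializes the deformation on each by means of two auxiliary radial foliations $\G_1,\G_2$ with disjoint tangency loci (Lemma \ref{l:radials4unfoldings}), imposing $\psi^j_{c|E}=\mathrm{Id}$, $(\psi^j_c)_*\F_0=\F_c$ and $(\psi^j_c)_*\G_j=\G_j$. The genuine gap is your final gluing step. First, two foliations on $W_0\cap W_i$ that induce $\widetilde{\F_c}$ on every slice do not automatically ``differ by an automorphism'': that is itself a claim requiring proof. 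Second, the vanishing $H^1\bigl(\mathcal{E},\textup{Diff}_{N}(E)\bigr)=0$ ``in relative form'' is established nowhere; the paper only asserts the absolute statement over $E$, in the proof of Theorem \ref{thm:invariantsinD}, where the holonomies agree merely to finite order --- a different situation --- and even that is only sketched. Third, and most seriously, even granting both points, killing the cocycle by a coboundary replaces your local foliations by pull-backs under maps which preserve the fibration $\dd t=0$ but \emph{not} the foliations $\widetilde{\F_c}$; the adjusted, glued foliation would therefore no longer induce the \emph{given} deformation $\{\F_c\}$ on the slices, so you would obtain an unfolding of some deformation, not of the one in the statement. As written, this step fails.

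The repair is that no cohomology is needed: the gluing can be made exact, and this is precisely where the constant-holonomy hypothesis must be spent. Near $t_i$, choose primitive first integrals $f_c$ of $\widetilde{\F_c}$ depending holomorphically on $c$. Equality of the germs $H(\F_c,t_i)=H(\F_0,t_i)$ (same base points, same germs, not merely conjugate ones) says exactly that $f_{c}|_E$ is constant on the fibres of $f_{0}|_E$, hence $f_c|_E=g_c\circ f_0|_E$ for diffeomorphism germs $g_c$ holomorphic in $c$; replacing $f_c$ by $g_c^{-1}\circ f_c$ gives $f_c|_E=f_0|_E$ for all $c$, and you may take $\widetilde{\F}|_{W_i}$ to be the level sets of $F(q,c)=f_c(q)$, which is automatically integrable. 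Its leaves then trace \emph{constant} sections on $\mathcal{E}$, exactly as those of $\widetilde{\F}|_{W_0}$ do, so on the overlap a leaf of either foliation is the union over $c$ of the $\widetilde{\F_c}$-leaves through one fixed point of $E$: the two foliations coincide and there is nothing to correct. (This is also the mechanism of the paper's proof, whose unfolding has leaves described in exactly this way, and from which equisingularity is read off.) Two smaller caveats: the same care is needed for $W_0$ itself, since over the annuli encircling $t_i$ the local leaf-projections to $E$ only match up after shrinking $W_0$ or invoking the constancy of the holonomy, so that region is not ``tautological''; and neither Proposition \ref{p:convergence of hol} nor Lemma \ref{ptitlemme} actually contains the parametrized local classification you attribute to them --- the first realizes a holonomy germ by a convergent model, the second conjugates pairs along a common invariant curve --- so the normalized first-integral argument above is also the missing justification for your local models.
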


\begin{pfof}{Proposition \ref{p:deformationvsunfolding}}
  The idea is to find open sets where the deformation is a product, and hence can be thought of as an unfolding. In fact, to define an unfolding we need to find two open sets $U_1$ and $U_2$ in $\widetilde{\mathbb{C}^2}$ satisfying that $U_1\cup U_2$ is a neighbourhood of $E$, and on each $U_j$ an analytic family $\psi^j_c:U_j\rightarrow U_j$ of biholomorphisms fixing every point of $E$ and sending $\mathcal{F}_{0|U_j}$ to $\mathcal{F}_{c|U_j}$.
  In the present case the construction of $U_1$, $U_2$ and $\psi^j_c$ will be carried with the help of radial foliations.

  \begin{lem}\label{l:radials4unfoldings}
    Given $\mathcal{F}\in\mathcal{D}$ there exist two radial foliations $\mathcal{G}_1, \mathcal{G}_2\in\mathcal{D}(0)$ such that $$|\textup{Tang}(\mathcal{F},\mathcal{G}_1)_{|E}|\cap |\textup{Tang}(\mathcal{F},\mathcal{G}_2)_{|E}| =\emptyset$$
  \end{lem}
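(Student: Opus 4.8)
The plan is to make the linear system $\textup{div}(\F)$ of (\ref{eq:div[F]}) completely explicit and then to produce two of its members with disjoint support by a genericity argument. I would first fix a representative $\omega=R_n\fradial+P_{n+2}\dd x+Q_{n+2}\dd y+\cdots$ of $\F\in\D(n)$, where $R_n$ is homogeneous of degree $n$, and a radial foliation $\G\in\D(0)$ generated by a vector field $X=\radial+A_2\partial_x+B_2\partial_y+\cdots$. Repeating the tangent-cone computation already carried out for $n=2,3$ in the proof of Theorem \ref{t:modulispacesDR}, the contraction $\omega(X)$ has order $n+3$ (its degree $n+2$ part vanishes because $\fradial(\radial)=0$) with tangent cone
\[(\omega(X))_{n+3}=H_{n+3}+R_nS_3,\qquad H_{n+3}=xP_{n+2}+yQ_{n+2},\quad S_3=xB_2-yA_2.\]
Since $(A_2,B_2)\mapsto xB_2-yA_2$ is onto the $4$-dimensional space $\mathcal{S}$ of homogeneous degree $3$ forms, the support of $\textup{Tang}(\F,\G)_{|E}$ runs exactly over the zero divisors of $H_{n+3}+R_nS_3$ as $S_3$ ranges over $\mathcal{S}$, which recovers the affine system (\ref{eq:div[F]}).

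Two observations then drive the argument. First, $R_n$ and $H_{n+3}$ are coprime: the radial foliation is transverse to $E$ at every point, whereas $\widetilde{\F}$ is tangent to $E$ precisely along $|T(\F)|=\{R_n=0\}$, so the two foliations are transverse there and no zero of $R_n$ can belong to the support of any $\textup{Tang}(\F,\G)_{|E}$; this is the inclusion $\textup{div}(\F)\subset\textup{div}(E\setminus|T(\F)|)$. Second, for each point $p\in E\setminus|T(\F)|$, represented by $(a,b)$ with $R_n(a,b)\neq0$, the condition that $p$ lie in the support of $H_{n+3}+R_nS_3$, namely $H_{n+3}(a,b)+R_n(a,b)S_3(a,b)=0$, is a single nondegenerate affine equation in $S_3$ and thus cuts out an affine hyperplane $L_p\subset\mathcal{S}$. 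With these in hand I would pick an arbitrary $\G_1$, let $\Sigma_1=|\textup{Tang}(\F,\G_1)_{|E}|$ be its (finite) support, which lies in $E\setminus|T(\F)|$, and note that $\bigcup_{p\in\Sigma_1}L_p$ is a finite union of proper affine subspaces of $\mathcal{S}\cong\mathbb{C}^4$, hence not all of $\mathcal{S}$. Choosing $S_3'$ outside this union and letting $\G_2$ be a radial foliation realizing it, no point of $\Sigma_1$ can lie in $\textup{Tang}(\F,\G_2)_{|E}$, so the two supports are disjoint.

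The only genuinely delicate point is the first observation: one must check that the members of $\textup{div}(\F)$ stay away from $|T(\F)|$, so that passing through a fixed point off $|T(\F)|$ really is a codimension-one, base-point-free hyperplane condition; everything after that is the elementary fact that a complex vector space is not a finite union of proper affine hyperplanes. I expect no trouble here for a general $\F\in\D$, since this coprimality is equivalent to $\widetilde{\F}$ being generically transverse to $E$ and tangent to it exactly on $|T(\F)|$, which holds for every foliation in $\D$; and the surjectivity of $(A_2,B_2)\mapsto xB_2-yA_2$ guarantees that the hyperplane-avoiding $S_3'$ is indeed realized by a bona fide radial foliation $\G_2$.
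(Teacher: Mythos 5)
Your proof is correct. Its computational core coincides with the paper's: both arguments identify the tangent cone of $\textup{Tang}(\F,\G)$ as $H_{n+3}+R_nS_3$, with $S_3$ running over the four-dimensional space of cubic forms as the quadratic part of the radial vector field varies, and both rest on the coprimality of $R_n$ and $H_{n+3}=xP_{n+2}+yQ_{n+2}$, which is exactly the regularity of $\widetilde{\F}$ along $E$. Where you genuinely diverge is in how the disjointness is produced. The paper first normalizes coordinates so that $P_{n+2}(1,t)+tQ_{n+2}(1,t)$ has full degree $n+3$ (so that the direction $x=0$ carries no tangency), and then exhibits an explicit pair, $\G_1=x\partial_x+y\partial_y$ and $\G_2=x\partial_x+(y+x^2)\partial_y$, whose tangency cones are $H_{n+3}$ and $H_{n+3}+R_nx^3$; a common zero of these would be a common zero of $H_{n+3}$ and $R_nx^3$, which coprimality plus the degree normalization forbid. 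You instead fix $\G_1$ arbitrarily and choose $\G_2$ generically: passing through each of the finitely many points of $\Sigma_1$ is a proper affine hyperplane condition on $S_3\in\mathcal{S}\cong\mathbb{C}^4$, and a finite union of proper affine hyperplanes cannot cover $\mathbb{C}^4$. The paper's route is shorter and completely explicit; yours buys a bit more: it shows that for any fixed $\G_1$ the admissible $\G_2$'s form the complement of finitely many hyperplanes in the space of tangency cones, hence are generic, and by working with homogeneous coordinates $(a,b)$ on $E$ it dispenses with the preliminary normalization needed in the paper to rule out a common tangency at the point at infinity of the affine chart.
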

\begin{proof} Up to some change of coordinates, we can suppose that
\[\omega(x,y)=R(x,y)(x\dd y-y\dd x)+P_{n+2}\dd x+Q_{n+2}\dd y+\ldots\] representing $\mathcal{F}$ satisfies that the degree of $P_{n+2}(1,t)+tQ_{n+2}(1,t)$ is $n+3$. Since, $R(1,t)$ and $P_{n+2}(1,t)+tQ_{n+2}(1,t)$ has no common roots, one can choose $\mathcal{G}_1=x\partial_x +y\partial_y$ and $\mathcal{G}_2=x\partial_x +(y+x^2)\partial_y.$
\end{proof}
Applying Lemma \ref{l:radials4unfoldings} to $\mathcal{F}_0$, find a Jordan curve $\gamma$ in $E$ that separates  $|\text{Tang}(\mathcal{F}_0,\mathcal{G}_1)_{|E}|$ from $ |\text{Tang}(\mathcal{F}_0,\mathcal{G}_2)_{|E}|$ and take $U_1$ a $\mathcal{F}_0$ saturated neighbourhood of the disc in $E\setminus\gamma $ with no point in $|\text{Tang}(\mathcal{F}_0,\mathcal{G}_1)_{|E}|$, and $U_2$ an $\mathcal{F}_0$ saturated neighbourhood of the complementary disc. For sufficiently small $c\in(\mathbb{C}^k,0)$, $\mathcal{F}_c$ and $\mathcal{G}_j$ are transverse on $U_j$. We can define the equivalence $\psi^j_c$ on $U_j$ by simply imposing $\psi^j_{c|E}=\text{Id}$,  $(\psi^j_c)_*(\mathcal{F}_0)=\mathcal{F}_c$ and $(\psi^j_c)_*(\mathcal{G}_j)=\mathcal{G}_j$. The analytic dependence of $\mathcal{F}_c$ along the parameter guarantees that the families $\psi^j_c$ are analytic on $c\in(\mathbb{C}^k,0)$. To prove that the unfolding is equisingular we need to check that along the whole process of desingularization the leaves of the unfolding are transverse to the parameter fibration $(x,c)\mapsto c$. In the present case remark that a leaf of the unfolding is formed by the union of leaves of the deformation that sit over {\em the same}
point in $E$, since we do not move the points in $E$ along the construction. Thus the trace of a leaf of the unfolding on the divisor $E\times(\mathbb{C}^k,0)$ is $p\times(\mathbb{C}^k,0)$, which is regular and transverse to the fibration.
\end{pfof}
The previous argument allows to prove a global version (in the parameter space):
\begin{cor}
Let $U\subset\mathbb{C}^k$ be an open set and $\{\mathcal{F}_c\}_{c\in U}\subset\mathcal{D}$ be an analytic family satisfying that $c\mapsto(H(\mathcal{F}_c),\text{div}(\mathcal{F}_c))$ is constant on $U$. Then the given deformation underlies  an equisingular unfolding defined on $(\mathbb{C}^2,0)\times U$

\end{cor}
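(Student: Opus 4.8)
The plan is to reduce to the mechanism already set up in the proof of Proposition~\ref{p:deformationvsunfolding}: it suffices to produce two \emph{fixed} open sets $U_1,U_2\subset\widetilde{\mathbb{C}^2}$ whose union is a germ neighbourhood of $E$, together with analytic families $\psi^j_c\colon U_j\to U_j$ (for $c\in U$, with $\psi^j_0=\textup{Id}$) of biholomorphisms fixing every point of $E$ and sending $\mathcal{F}_{0|U_j}$ to $\mathcal{F}_{c|U_j}$. The unfolding is then obtained by the very same gluing, and equisingularity holds because no point of $E$ moves. The only thing that has to be upgraded from the local statement is that $U_1,U_2$ and the $\psi^j_c$ be available for every $c\in U$ at once.

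First I would record that the hypothesis freezes the relevant tangency data. Since $H(\mathcal{F}_c)$ is constant, its base points are constant, so $T(\mathcal{F}_c)=T(\mathcal{F}_0)$ does not move; and by assumption the four--dimensional affine space $\textup{div}(\mathcal{F}_c)\subset\textup{div}E(n+3)$ equals a fixed $V=\textup{div}(\mathcal{F}_0)$. Applying Lemma~\ref{l:radials4unfoldings} to $\mathcal{F}_0$ gives two radial foliations whose tangencies with $\mathcal{F}_0$ meet $E$ in disjoint divisors $D_1,D_2\in V$ with $|D_1|\cap|D_2|=\emptyset$. I fix once and for all a Jordan curve $\gamma\subset E$ separating $|D_1|$ from $|D_2|$ and take $U_1,U_2$ to be germ saturated neighbourhoods of the two discs $\Delta_1,\Delta_2$ of $E\setminus\gamma$, arranged so that $\overline{\Delta_j}$ avoids $|D_j|$. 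These data are now independent of $c$.

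The heart of the argument is to realise the \emph{fixed} divisors $D_j$ by radial foliations that vary analytically with $c$. Here I use that $\textup{div}(\mathcal{F}_c)\equiv V$: restricting the assignment $\mathcal{G}\mapsto\textup{Tang}(\mathcal{F}_c,\mathcal{G})_{|E}$ to a suitable finite--dimensional family of radial foliations, one gets a map that is affine in the parameters, analytic in $c$, and of constant rank (its image being the fixed $V$). Choosing an analytic section, for instance via a pseudo--inverse, produces $\mathcal{G}_j(c)\in\mathcal{D}(0)$, analytic on all of $U$, with $\mathcal{G}_j(0)$ the foliations above and $\textup{Tang}(\mathcal{F}_c,\mathcal{G}_j(c))_{|E}=D_j$ for every $c$. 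Consequently the tangency locus meets $E$ in the fixed set $|D_j|$, which is disjoint from $\overline{\Delta_j}$, so $\mathcal{F}_c$ and $\mathcal{G}_j(c)$ are transverse on $U_j$ for all $c\in U$. Exactly as in Proposition~\ref{p:deformationvsunfolding} I then define $\psi^j_c$ on $U_j$ by imposing $\psi^j_{c|E}=\textup{Id}$, $(\psi^j_c)_*\mathcal{F}_0=\mathcal{F}_c$ and $(\psi^j_c)_*\mathcal{G}_j(0)=\mathcal{G}_j(c)$; transversality makes $\psi^j_c$ well defined and holomorphic, and analytic in $c$. Gluing the two trivialisations yields the desired equisingular unfolding on $(\mathbb{C}^2,0)\times U$.

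The main obstacle is precisely this global analytic realisation step. While Proposition~\ref{p:deformationvsunfolding} could keep a single pair of radial foliations, because small perturbations of $c$ move $\textup{Tang}(\mathcal{F}_c,\mathcal{G}_j)_{|E}$ only slightly, over a large $U$ a fixed $\mathcal{G}_j$ would let the tangency support drift across $\gamma$ and spoil transversality on $U_j$. The stronger hypothesis that $\textup{div}(\mathcal{F}_c)$---and not merely $H(\mathcal{F}_c)$---be constant is exactly what guarantees that the fixed divisors $D_j$ stay realizable, and the point requiring care is the constant--rank/analytic--section claim that pins their supports to the $c$-independent sets $|D_j|$, so that a single $\gamma$, $U_1$ and $U_2$ work uniformly in $c$.
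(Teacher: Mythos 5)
Your proposal is correct and takes essentially the same route as the paper's own (one-sentence) proof: use the constancy of $\textup{div}(\mathcal{F}_c)$ to realize two fixed divisors with disjoint supports (Lemma \ref{l:radials4unfoldings} applied to $\mathcal{F}_0$) by radial foliations $\mathcal{G}_j(c)$ for every $c\in U$, keep the Jordan curve and the saturated open sets $U_1,U_2$ fixed, and rerun the construction of Proposition \ref{p:deformationvsunfolding}. The analytic dependence of $\mathcal{G}_j(c)$ on $c$, which the paper leaves implicit and you handle via a constant-rank/pseudo-inverse argument, is in fact even simpler: since $T(\mathcal{F}_c)$ is fixed and coprime to the realized divisor, the tangent-cone equation determining the $2$-jet of $\mathcal{G}_j(c)$ (up to the fixed kernel) is an inhomogeneous linear system with unique solution, hence analytic in $c$ by Cramer's rule.
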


\begin{proof}
  The only difference with the previous case is that we can assume that $\text{Tang}(\mathcal{F}_c,\mathcal{G}_i)_{|E}$ does not depend on $c\in U$, and thus the open sets $U_1$ and $U_2$ can be fixed, and the equivalences constructed by the same method.
\end{proof}

\bigskip
\begin{minipage}{0.49\linewidth}
G. Calsamiglia\\
{\scriptsize Instituto de Matem\'{a}tica \\
Universidade Federal Fluminense\\
Rua M\' ario Santos Braga s/n\\
24020-140, Niter\'{o}i, Brazil \\
gabriel@mat.uff.br }
\end{minipage}
\begin{minipage}{0.49\linewidth}
Y. Genzmer\\
{\scriptsize I.M.T.\\
Universit\'e Paul Sabatier\\
Toulouse \\
yohann.genzmer@math.univ-toulouse.fr}
\end{minipage}

\end{document}